\newtheorem{theorem}{Theorem}[section]
\newtheorem{lemma}[theorem]{Lemma}
\newtheorem{proposition}[theorem]{Proposition}
\newtheorem{example}[theorem]{Example}
\theoremstyle{definition}
\newtheorem*{definition*}{Definition}
\newtheorem{definition}[theorem]{Definition}
\theoremstyle{remark}
\newtheorem*{remark*}{Remark}
\newtheorem{remark}[theorem]{Remark}
\theoremstyle{question}
\newtheorem*{question*}{Question}
\numberwithin{equation}{section}
\newcommand{\myproof}[2]{Proof of {#1} {#2}}
\begin{document}

\title[Categorical quantization on K\"ahler manifolds]{Categorical quantization on K\"ahler manifolds}

\author{YuTung Yau}
\address{Department of Mathematics, University of Michigan, Ann Arbor, MI, 48109, USA}
\email{ytyau@umich.edu}

\thanks{}

\maketitle


\begin{abstract}
	Generalizing deformation quantizations with separation of variables of a K\"ahler manifold $M$, we adopt Fedosov's gluing argument to construct a category $\mathsf{DQ}$, enriched over sheaves of $\mathbb{C}[[\hbar]]$-modules on $M$, as a quantization of the category of Hermitian holomorphic vector bundles over $M$ with morphisms being smooth sections of hom-bundles.\par
	We then define quantizable morphisms among objects in $\mathsf{DQ}$, generalizing Chan-Leung-Li's notion \cite{ChaLeuLi2023} of quantizable functions. Upon evaluation of quantizable morphisms at $\hbar = \tfrac{\sqrt{-1}}{k}$, we obtain an enriched category $\mathsf{DQ}_{\operatorname{qu}, k}$. We show that, when $M$ is prequantizable, $\mathsf{DQ}_{\operatorname{qu}, k}$ is equivalent to the category $\mathsf{GQ}$ of holomorphic vector bundles over $M$ with morphisms being holomorphic differential operators, via a functor obtained from Bargmann-Fock actions.
\end{abstract}

\section{Introduction}
Let $(M, \omega)$ be a K\"ahler manifold. When $(M, \omega)$ is compact and admits a prequantum line bundle $L$, geometric quantization of $(M, \omega)$ yields the quantum Hilbert space $H^0(M, L^{\otimes k})$ for $k \in \mathbb{Z}^+$. A deformation quantization $(\mathcal{C}_M^\infty[[\hbar]], \star)$ of $(M, \omega)$, which is a non-commutative deformation of the sheaf $\mathcal{C}_M^\infty$ of $\mathbb{C}$-valued smooth functions on $M$, is specified by the asymptotic action of $\mathcal{C}^\infty(M, \mathbb{C})$ on $H^0(M, L^{\otimes k})$ via Toeplitz operators as $\hbar = \tfrac{\sqrt{-1}}{k} \to 0$ \cite{Cha2006, Sch2000}.\par
In \cite{ChaLeuLi2023}, Chan-Leung-Li introduced the notion of \emph{formal quantizable functions}, which form a dense subsheaf $\mathcal{C}_{M, \operatorname{qu}}^\infty$ of $(\mathcal{C}_M^\infty[[\hbar]], \star)$ and can be evaluated at $\hbar = \tfrac{\sqrt{-1}}{k}$ without convergence issues to obtain the sheaf $\mathcal{C}_{M, \operatorname{qu}, k}^\infty$ of \emph{level-}$k$ \emph{quantizable functions}. They constructed an action of $\mathcal{C}_{M, \operatorname{qu}, k}^\infty$ on the sheaf $\mathcal{L}^{\otimes k}$ of holomorphic sections of $L^{\otimes k}$ and showed that $\mathcal{C}_{M, \operatorname{qu}, k}^\infty$ is isomorphic to the sheaf $\mathcal{D}(\mathcal{L}^{\otimes k}, \mathcal{L}^{\otimes k})$ of holomorphic differential operators on $L^{\otimes k}$ via this action.\par
In this paper, we apply Chan-Leung-Li's sheaf-theoretic technique to quantize not only $(M, \omega)$ coupled with Hermitian holomorphic vector bundles $E_i$ over $M$, but also $\operatorname{End}(E_i)$-$\operatorname{End}(E_j)$ bimodules $\operatorname{Hom}(E_i, E_j)$. The whole quantum structure is encoded in a category $\mathsf{DQ}$, enriched over sheaves of $\mathbb{C}[[\hbar]]$-modules on $M$, whose objects are Hermitian holomorphic vector bundles over $M$. Analogously, \emph{formal quantizable morphisms} in $\mathsf{DQ}$ form an enriched subcategory $\mathsf{DQ}_{\operatorname{qu}}$ of $\mathsf{DQ}$, and we obtain an enriched category $\mathsf{DQ}_{\operatorname{qu}, k}$ from $\mathsf{DQ}_{\operatorname{qu}}$ by evaluation at $\hbar = \tfrac{\sqrt{-1}}{k}$.\par
While we regard $\mathsf{DQ}$ as a `categorification' of deformation quantizations of $(M, \omega)$, we also `categorify' the quantum Hilbert space $H^0(M, L^{\otimes k})$ - we consider the enriched category $\mathsf{GQ}$ of holomorphic vector bundles over $M$ with morphisms being holomorphic differential operators among those bundles. A `categorification' of the above sheaf action of non-formal deformation quantization on geometric quantization is encoded by an enriched functor
\begin{equation*}
	\mathscr{T}_k: \mathsf{DQ}_{\operatorname{qu}, k} \to \mathsf{GQ}
\end{equation*}
sending $E$ to $E \otimes L^{\otimes k}$, which is proved to be an equivalence of enriched categories in this paper.\par
We now state our first main result, which shows the existence of $\mathsf{DQ}$ as a `categorification' of deformation quantizations with separation of variables of $(M, \omega)$.
\begin{theorem}
	\label{Theorem 1.1}
	Let $(M, \omega)$ be a K\"ahler manifold. Then there exists a category $\mathsf{DQ}$, enriched over the monoidal category $\mathsf{Sh}_\hbar(M)$ of sheaves of $\mathbb{C}[[\hbar]]$-modules on $M$, defined as follows:
	\begin{itemize}
		\item objects in $\mathsf{DQ}$ are Hermitian holomorphic vector bundles over $M$;
		\item for any objects $E_1, E_2$ in $\mathsf{DQ}$ and open subset $U$ of $M$,
		\begin{equation*}
			\operatorname{Hom}_{\mathsf{DQ}} (E_1, E_2)(U) = \mathcal{C}^\infty(U, \operatorname{Hom}(E_1, E_2))[[\hbar]];
		\end{equation*}
		\item for any objects $E_1, E_2, E_3$ in $\mathsf{DQ}$, the composition
		\begin{equation*}
			\operatorname{Hom}_{\mathsf{DQ}} (E_2, E_3) \otimes_{\mathbb{C}[[\hbar]]} \operatorname{Hom}_{\mathsf{DQ}} (E_1, E_2) \to \operatorname{Hom}_{\mathsf{DQ}} (E_1, E_3)
		\end{equation*}
		is given by $\star$ defined as in (\ref{Equation 4.9}).
	\end{itemize}
	Moreover, for any objects $E_1, E_2$ in $\mathsf{DQ}$, open subset $U$ of $M$, $\phi \in \mathcal{C}^\infty(U, \operatorname{Hom}(E_1, E_2))[[\hbar]]$ and $\psi \in \mathcal{C}^\infty(U, \operatorname{Hom}(E_2, E_3))[[\hbar]]$, the following conditions are satisfied:
	\begin{enumerate}
		\item (\emph{classical limit}) if $\phi \in \mathcal{C}^\infty(U, \operatorname{Hom}(E_1, E_2))$ and $\psi \in \mathcal{C}^\infty(U, \operatorname{Hom}(E_2, E_3))$, then
		\begin{equation*}
			\psi \star \phi = \psi \phi \pmod \hbar;
		\end{equation*}
		\item (\emph{semi-classical limit}) if $\phi \in \mathcal{C}^\infty(U, \operatorname{Hom}(E_1, E_2))$ and $f \in \mathcal{C}^\infty(U, \mathbb{C})$, then
		\begin{equation*}
			(f \operatorname{Id}_{E_2}) \star \phi - \phi \star (f\operatorname{Id}_{E_1}) = \hbar \{f, \phi\} \pmod {\hbar^2},
		\end{equation*}
		where $\{f, \phi\} := \nabla_{X_f}^{\operatorname{Hom}(E_1, E_2)} \phi$ and $X_f$ is the $\omega$-Hamiltonian vector field of $f$;
		\item (\emph{separation of variables}) if $\psi$ is holomorphic or $\phi$ is anti-holomorphic, then $\psi \star \phi = \psi \phi$;
		\item (\emph{degree preserving property}) if $\phi, \psi$ are formal quantizable of degrees $r_1, r_2$ respectively, then $\psi \star \psi$ is formal quantizable of degree $r_1 + r_2$.
	\end{enumerate}
\end{theorem}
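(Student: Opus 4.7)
The plan is to run a Fedosov-type construction simultaneously for every pair of bundles, generalising the scalar separation-of-variables quantisation of Karabegov and the bundle-valued Fedosov frameworks of Bordemann--Waldmann and Neumaier. For each ordered pair $(E_1, E_2)$ of Hermitian holomorphic vector bundles over $M$, I would introduce a Weyl-type bundle $\mathcal{W}(E_1, E_2)$ whose fibre at $x \in M$ is the formal completion
\begin{equation*}
\widehat{\operatorname{Sym}}(T_x^{*1,0}M) \otimes \widehat{\operatorname{Sym}}(T_x^{*0,1}M) \otimes \operatorname{Hom}(E_1|_x, E_2|_x)[[\hbar]],
\end{equation*}
together with a fibrewise ``normal-ordered'' composition
\begin{equation*}
\circ_\hbar : \mathcal{W}(E_2, E_3) \otimes_{\mathbb{C}[[\hbar]]} \mathcal{W}(E_1, E_2) \longrightarrow \mathcal{W}(E_1, E_3)
\end{equation*}
in which antiholomorphic derivations act on the left factor and holomorphic derivations on the right, contracted by $\hbar$ times the inverse K\"ahler metric, followed by pointwise composition in the $\operatorname{Hom}$ fibres. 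Associativity of $\circ_\hbar$, which ultimately furnishes associativity of the enriched composition in $\mathsf{DQ}$, is a constant-coefficient computation at each point.

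Next I would construct Fedosov-type flat connections $D_{12}$ on each $\mathcal{W}(E_1, E_2)$, starting from the tensor product of the Chern connections of $E_1^*$ and $E_2$ with the Levi-Civita connection on $T^*M$, and correcting by a $\mathcal{W}(E_1, E_2)$-valued $1$-form $A_{12}$ produced recursively in Fedosov symmetric degree from the K\"ahler data (potential and curvatures). Imposing the standard $\delta$-normalisation together with a bi-grading condition matched to the $T^{*1,0} \oplus T^{*0,1}$ splitting ensures both flatness of $D_{12}$ and preservation of the separation-of-variables grading. The same universal prescription yields connections obeying the Leibniz identity with respect to $\circ_\hbar$ across any triple $(E_1, E_2, E_3)$, so that flat sections assemble into an associative enriched composition. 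The symbol map $\sigma_{12} : \ker D_{12} \xrightarrow{\sim} \mathcal{C}^\infty(-, \operatorname{Hom}(E_1, E_2))[[\hbar]]$ then transports $\circ_\hbar$ to the desired star product $\star$ as in (\ref{Equation 4.9}).

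The four listed properties I would verify as follows. The classical limit is immediate since the $\hbar^0$-term of $\circ_\hbar$ is ordinary composition. The semi-classical limit requires extracting the $\hbar^1$-coefficient of the commutator $(f\operatorname{Id}_{E_2}) \star \phi - \phi \star (f\operatorname{Id}_{E_1})$, which in local K\"ahler coordinates reduces to $g^{j\bar{k}}\bigl((\partial_{\bar k} f)\nabla_{\partial_j}\phi - (\partial_j f)\nabla_{\partial_{\bar k}}\phi\bigr)$ and thus equals $\nabla_{X_f}^{\operatorname{Hom}(E_1, E_2)}\phi$ by the definition of the Hamiltonian vector field. Separation of variables follows directly from the normal ordering: if $\psi$ is holomorphic then its Fedosov flat lift is polynomial only in the antiholomorphic Weyl generators, so its $\circ_\hbar$-product with any lift of $\phi$ collapses to ordinary composition, and the dual statement handles anti-holomorphic $\phi$. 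Degree preservation is built in by assigning weights to $\hbar$ and the Weyl generators and checking weight-homogeneity of $A_{12}$ and $\circ_\hbar$. The main obstacle I anticipate is not any single step above but precisely the cross-bundle Leibniz compatibility between $D_{12}, D_{23}, D_{13}$ and $\circ_\hbar$: this is what upgrades a family of single-object Fedosov quantisations to a coherent enriched category, and it is where the $A_{ij}$ must be constructed from a single universal datum, built canonically from the K\"ahler metric and the Chern connections, rather than independently bundle-by-bundle.
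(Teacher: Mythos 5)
Your proposal follows essentially the same route as the paper: a Weyl bundle tensored with $\operatorname{Hom}(E_1,E_2)$, a fibrewise anti-Wick (normal-ordered) product as in (\ref{Equation 4.2}), flat Fedosov-type connections $D^{E_1,E_2}$ built from the Levi-Civita and Chern connections, the cross-bundle Leibniz rule (the paper's Proposition \ref{Proposition 4.7}) to descend the product to flat sections, and the symbol map $\pi_0$. The ``single universal datum'' you correctly identify as the crux is realized in the paper by Kapranov's $L_\infty$-structures: the correction one-form is $\gamma = (\delta^{0,1})^{-1}\omega + (\delta^{1,0})^{-1}\omega + \widetilde{I}$ acting by $\tfrac{1}{\hbar}[\gamma,\ \cdot\ ]_\star$ together with the module terms $I^{E_2}\star - \star\, I^{E_1}$, rather than by an abstract recursive normalization. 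One internal inconsistency to fix: in your separation-of-variables step you assert that the flat lift of a holomorphic $\psi$ is polynomial only in the \emph{antiholomorphic} Weyl generators; with your stated ordering convention (antiholomorphic derivations hitting the left factor) this would \emph{not} make the product collapse. The correct statement, as in Example \ref{Example 4.9}, is that $O_\psi = \sum_{r\ge 0}(\widetilde{\nabla}^{1,0})^r\psi$ lies in $\mathcal{W}_{\operatorname{cl}}^{1,0}\otimes\operatorname{Hom}(E_2,E_3)$, i.e.\ contains no antiholomorphic generators, so the contractions vanish. Note also that the anti-holomorphic case is not literally dual: $O_\phi$ for anti-holomorphic $\phi$ need not lie in $\mathcal{W}^{0,1}$; one only needs that its $\mathcal{W}^{1,0}\otimes\operatorname{Hom}(E_1,E_2)$-component equals $\phi$ (Remark \ref{Remark 4.6}), which is all that survives the holomorphic derivatives and $\pi_0$.
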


For a Hermitian holomorphic vector bundle $E$ over $M$, the endomorphism sheaf $\operatorname{Hom}_{\mathsf{DQ}}(E, E)$ together with its algebra structure given by $\star$ forms a deformation quantization with separation of variables of $\operatorname{End}(E)$, which were studied in \cite{Kar2014}. In particular, when $E$ is of rank $1$, $\operatorname{Hom}_{\mathsf{DQ}}(E, E)$ forms a deformation quantization of $(M, \omega)$ under the canonical isomorphism
\begin{equation*}
	\mathcal{C}^\infty(M, \operatorname{Hom}(E, E)) \cong \mathcal{C}^\infty(M, \mathbb{C}).
\end{equation*}
A key technique in the proof of Theorem \ref{Theorem 1.1} is Fedosov's gluing argument, which is used by Chan-Leung-Li in \cite{ChaLeuLi2023} as well. Now, assume $(M, \omega)$ is equipped with a prequantum line bundle $L$. Another technique to yield a similar categorical structure is the use of Toeplitz operators \cite{MaMar2012, AdaIshKan2023} (which requires that $M$ is compact). The former technique has an advantage over the latter - it is much more straightforward to see that the category $\mathsf{DQ}$ obtained is enriched over $\mathsf{Sh}_\hbar(M)$.\par
Passing from formal to non-formal quantization by evaluation at $\hbar = \tfrac{\sqrt{-1}}{k}$ (for $k \in \mathbb{Z}^+$), we then consider the enriched category $\mathsf{DQ}_{\operatorname{qu}, k}$ having the same objects as $\mathsf{DQ}$. For any objects $E_1, E_2$ in $\mathsf{DQ}_{\operatorname{qu}, k}$, the sheaf $\operatorname{Hom}_{\mathsf{DQ}_{\operatorname{qu}, k}}(E_1, E_2)$ is indeed a filtered left $\mathcal{O}_M$-module, where $\mathcal{O}_M$ is the sheaf of holomorphic functions on $M$. By Fedosov's gluing argument again, we glue the fibrewise Bargmann-Fock action to obtain a morphism of sheaves
\begin{equation*}
	\circledast_k: \operatorname{Hom}_{\mathsf{DQ}_{\operatorname{qu}, k}}(E_1, E_2) \times \mathcal{E}_1 \otimes \mathcal{L}^{\otimes k} \to \mathcal{E}_2 \otimes \mathcal{L}^{\otimes k},
\end{equation*}
where $\mathcal{E}_i$ is the sheaf of holomorphic sections of $E_i$ for $i = 1, 2$.\par
Our second main result states that $\circledast_k$ induces an equivalence $\mathscr{T}_k: \mathsf{DQ}_{\operatorname{qu}, k} \to \mathsf{GQ}$ of enriched categories, `categorifying' the sheaf action of non-formal quantizable functions on holomorphic sections of $L^{\otimes k}$ constructed by Chan-Leung-Li in \cite{ChaLeuLi2023}.

\begin{theorem}
	\label{Theorem 1.2}
	Let $(M, \omega)$ be a prequantizable K\"ahler manifold with a prequantum line bundle $L$ and $k \in \mathbb{Z}^+$. Then there exists an enriched functor 
	\begin{equation*}
		\mathscr{T}_k: \mathsf{DQ}_{\operatorname{qu}, k} \to \mathsf{GQ}
	\end{equation*}
	such that
	\begin{enumerate}
		\item for any object $E$ in $\mathsf{DQ}_{\operatorname{qu}, k}$, $\mathscr{T}_k(E) = E \otimes L^{\otimes k}$;
		\item for any objects $E_1, E_2$ in $\mathsf{DQ}_{\operatorname{qu}, k}$,
		\begin{equation*}
			\mathscr{T}_k: \operatorname{Hom}_{\mathsf{DQ}_{\operatorname{qu}, k}}(E_1, E_2) \to \operatorname{Hom}_\mathsf{GQ}(\mathscr{T}_k(E_1), \mathscr{T}_k(E_2))
		\end{equation*}
		is an isomorphism of filtered left $\mathcal{O}_M$-modules;
		\item $\mathscr{T}_k$ yields an equivalence of categories enriched over the monoidal category $\mathsf{Sh}(M)$ of sheaves of $\mathbb{C}$-vector spaces on $M$.
	\end{enumerate}
\end{theorem}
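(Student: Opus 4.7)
The plan is to define $\mathscr{T}_k$ on objects by $E \mapsto E \otimes L^{\otimes k}$ and on morphisms by $\mathscr{T}_k(\phi)(s) := \phi \circledast_k s$, where $\circledast_k$ is the Bargmann-Fock action constructed via Fedosov's gluing argument as indicated in the introduction. First I would verify well-definedness: for a formal quantizable morphism $\phi$ of degree $r$, evaluation at $\hbar = \tfrac{\sqrt{-1}}{k}$ produces an action $\phi \circledast_k (-)$ which, in any local K\"ahler-Darboux chart trivializing $E_1$, $E_2$ and $L$, is a finite sum of holomorphic differential operator contributions of order at most $r$; this is a consequence of the degree-preserving property (Theorem \ref{Theorem 1.1}(4)) together with the explicit form of the fibrewise Bargmann-Fock action, whose evaluation at a fixed value of $\hbar$ truncates the formal series into a polynomial in $\partial$. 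Functoriality $\mathscr{T}_k(\psi \star \phi) = \mathscr{T}_k(\psi) \circ \mathscr{T}_k(\phi)$ is then automatic, being an instance of the action property of $\circledast_k$ built into Fedosov's construction; preservation of identities is immediate from normalization of the Bargmann-Fock vacuum.

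For part (2), the crux is to show the morphism map is a filtered isomorphism of $\mathcal{O}_M$-modules. My strategy is to reduce to the scalar case treated by Chan-Leung-Li in \cite{ChaLeuLi2023}. Over any open $U$ on which all three bundles trivialize, a morphism in $\operatorname{Hom}_{\mathsf{DQ}_{\operatorname{qu}, k}}(E_1, E_2)(U)$ becomes a matrix of quantizable functions on $U$, while a morphism in $\operatorname{Hom}_\mathsf{GQ}(E_1 \otimes L^{\otimes k}, E_2 \otimes L^{\otimes k})(U)$ becomes a matrix of holomorphic differential operators on $\mathcal{L}^{\otimes k}|_U$. The key technical check is that $\circledast_k$, built from the Fedosov connection on $\operatorname{Hom}(E_1, E_2)$ tensored with the Weyl bundle, acts entry-by-entry in such a trivialization; this amounts to comparing the matrix-valued Fedosov recursion with the scalar recursion of loc.\ cit., the matrix structure entering only as coefficients. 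Once this compatibility is established, bijection and filtration preservation follow entry-wise from the scalar isomorphism $\mathcal{C}^\infty_{M, \operatorname{qu}, k} \cong \mathcal{D}(\mathcal{L}^{\otimes k}, \mathcal{L}^{\otimes k})$ of \cite{ChaLeuLi2023}, and the global statement is then obtained by standard sheaf-theoretic gluing.

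For part (3), essential surjectivity of $\mathscr{T}_k$ holds because any holomorphic vector bundle $F$ over the paracompact manifold $M$ admits a Hermitian metric, so $F \otimes L^{\otimes -k}$ is an object of $\mathsf{DQ}_{\operatorname{qu}, k}$ with $\mathscr{T}_k(F \otimes L^{\otimes -k}) \cong F$. Combined with the fully-faithfulness furnished by (2), this yields the equivalence of $\mathsf{Sh}(M)$-enriched categories; enrichment with respect to the coarser base $\mathsf{Sh}(M)$ rather than $\mathsf{Sh}_\hbar(M)$ is natural since $\mathscr{T}_k$ is an evaluation functor. I expect the main obstacle to be the entry-wise compatibility underlying the reduction above, i.e.\ a rigorous identification of Fedosov's connection on $\operatorname{Hom}(E_1, E_2)$ tensored with the Weyl bundle with the matrix version of the scalar Fedosov connection used in \cite{ChaLeuLi2023}; modulo this check, everything else is either formal or a direct appeal to loc.\ cit.
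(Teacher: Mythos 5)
Your overall architecture matches the paper's: same object map, morphism map induced by the fibrewise Bargmann--Fock action $\circledast_k$, and essential surjectivity by equipping an arbitrary holomorphic bundle $F$ with a Hermitian metric so that $F \cong \mathscr{T}_k(F \otimes L^{\otimes -k})$. However, the crux of part (2) in your proposal --- the entry-by-entry reduction to the scalar isomorphism $\mathcal{C}^\infty_{M,\operatorname{qu},k} \cong \mathcal{D}(\mathcal{L}^{\otimes k}, \mathcal{L}^{\otimes k})$ of Chan--Leung--Li --- does not work as stated, and you have correctly identified it as the step on which everything hinges. The connection $D_k^{E_1,E_2} = \nabla + \nabla^{\operatorname{Hom}(E_1,E_2)} + \tfrac{k}{\sqrt{-1}}[\gamma, \;]_{\star_k} + I^{E_2}\star_k - \star_k I^{E_1}$ differs from the entry-wise scalar Fedosov connection by the $(1,0)$-part of the Chern connection of $\operatorname{Hom}(E_1,E_2)$ and by the terms $I^{E_2}\star_k - \star_k I^{E_1}$, which are built from the curvatures $R^{E_i}$ and their covariant derivatives. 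In a local holomorphic trivialization these are genuinely matrix-valued one-forms acting by left and right multiplication; they couple the matrix entries at every order of the recursion determining a flat section from its symbol. Consequently a level-$k$ quantizable morphism is \emph{not} a matrix of level-$k$ quantizable functions unless $E_1, E_2$ are flat, and ``the matrix structure entering only as coefficients'' is precisely what fails. One could try to repair this by a local gauge equivalence between the two flat connections, but then the identification is no longer entry-wise, and its compatibility with $\circledast_k$ and with the order filtration on differential operators becomes a new, unaddressed problem.

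The paper avoids localization entirely. Its key step (Lemma \ref{Lemma 5.9}) shows that under the global identification $\underline{\mathcal{W}}_{\operatorname{cl}} \otimes \operatorname{Hom}(E_1,E_2) \cong \operatorname{Hom}^{<\infty}(\mathcal{W}_{\operatorname{cl}}^{1,0}\otimes\mathscr{T}_k(E_1), \mathcal{W}_{\operatorname{cl}}^{1,0}\otimes\mathscr{T}_k(E_2))$ furnished by $\circledast_k$, the connection $D_k^{E_1,E_2}$ coincides with the Kapranov connection $D_{\operatorname{Kap}}^{\mathscr{T}_k(E_1),\mathscr{T}_k(E_2)}$ and $\star_k$ coincides with fibrewise composition; this matching uses $I^L = \tfrac{1}{\sqrt{-1}}(\delta^{1,0})^{-1}\omega$ (Lemma \ref{Lemma 5.6}), which is exactly where the prequantum condition $R^L = \tfrac{1}{\sqrt{-1}}\omega$ enters --- a point your proposal never isolates. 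Proposition \ref{Proposition 3.11} then converts $D_{\operatorname{Kap}}$-flat sections of $\operatorname{Hom}^{\leq r}$ into holomorphic differential operators of order $\leq r$, giving the filtered isomorphism directly, and the $\mathcal{O}_M$-linearity is checked via $O_{f\operatorname{Id}_{E_2}} = \sum_r (\widetilde{\nabla}^{1,0})^r(f\operatorname{Id}_{E_2})$. You should replace the entry-wise reduction by this structural identification (or supply the gauge-equivalence argument in full, with its compatibility checks); as written, part (2) of your proof has a genuine gap.
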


In \cite{Yau2024}, the author of this paper investigated how Chan-Leung-Li's work \cite{ChaLeuLi2023} mathematically realizes a certain part of the categorical structure for (rank-$1$) \emph{coisotropic A-branes} on a sufficiently small complexification $X$ of $M$, following Gukov-Witten's physical proposal for quantization \cite{GukWit2009}. This categorical structure was first expected by Kapustin-Orlov \cite{KapOrl2003} for the sake of the Homological Mirror Symmetry Conjecture \cite{Kon1995}. One of the potential applications of our main results is that the category $\mathsf{DQ}_{\operatorname{qu}, k}$, the functor $\mathscr{T}_k: \mathsf{DQ}_{\operatorname{qu}, k} \to \mathsf{GQ}$ and its \emph{transposed functor} $\mathscr{T}_k^{\operatorname{t}}: \mathsf{DQ}_{\operatorname{qu}, k} \to  \mathsf{GQ}^{\operatorname{op}}$ (see Remark \ref{Remark 5.10}) might serve as a mathematical realization of a larger part of the above categorical structure which involves \emph{higher rank coisotropic A-branes} \cite{Her2012}. It will be studied in the future.\par
Throughout this paper, except in Appendix \ref{Appendix B}, assume that $(M, \omega)$ is a K\"ahler manifold. A list of notations are adopted as below:
\begin{itemize}
	\item $\hbar$ is a formal variable over $\mathbb{C}$.
	\item We write $T_\mathbb{C} = TM_\mathbb{C}$, $T = T^{1, 0}M$ and $\overline{T} = T^{0, 1}M$.
	\item Denote by $\mathsf{Sh}(M)$ (resp. $\mathsf{Sh}_\hbar(M)$) the monoidal category of sheaves of $\mathbb{C}$-vector spaces (resp. $\mathbb{C}[[\hbar]]$-modules) on $M$ equipped with the tensor product $\otimes_\mathbb{C}$ (resp. $\otimes_{\mathbb{C}[[\hbar]]}$).
	\item Denote by $\mathcal{O}_M$ (resp. $\mathcal{C}_M^\infty$) the sheaf of holomorphic functions (resp. complex-valued smooth functions) on $M$.
	\item Denote by $\nabla$ the Levi-Civita connection of the $(M, \omega)$. 
	\item For each holomorphic vector bundle $E$ over $M$, denote by $\mathcal{E}$ the sheaf of holomorphic sections of $E$ and by $JE$ the holomorphic jet bundle of $E$.
	\item For any two holomorphic vector bundles $E_1$, $E_2$ over $M$, denote by $\mathcal{D}(\mathcal{E}_1, \mathcal{E}_2)$ the sheaf of holomorphic differential operators from $E_1$ to $E_2$. 
	\item For a Hermitian holomorphic vector bundle $E$ over $M$, denote by $\nabla^E$ its Chern connection.
	\item For a complex vector bundle $E$ over $M$, we denote by $\operatorname{Sym} E$ (resp. $\widehat{\operatorname{Sym}} E$) the symmetric algebra bundle (resp. completed symmetric algebra bundle) of $E$. We also define
	\begin{center}
		\begin{tabular}{ll}
			$\mathcal{W} = \widehat{\operatorname{Sym}} T_\mathbb{C}^\vee[[\hbar]],$ & $\mathcal{W}_{\operatorname{cl}} = \widehat{\operatorname{Sym}} T_\mathbb{C}^\vee,$\\
			$\underline{\mathcal{W}} = \widehat{\operatorname{Sym}} T^\vee \otimes \operatorname{Sym} \overline{T}^\vee [\hbar],$ & $\underline{\mathcal{W}}_{\operatorname{cl}} = \widehat{\operatorname{Sym}} T^\vee \otimes \operatorname{Sym} \overline{T}^\vee,$\\
			$\mathcal{W}^{1, 0} = \widehat{\operatorname{Sym}} T^\vee [[\hbar]],$ & $\mathcal{W}_{\operatorname{cl}}^{1, 0} = \widehat{\operatorname{Sym}} T^\vee,$\\
			$\mathcal{W}^{0, 1} = \widehat{\operatorname{Sym}} \overline{T}^\vee [[\hbar]],$ & $\mathcal{W}_{\operatorname{cl}}^{0, 1} = \widehat{\operatorname{Sym}} \overline{T}^\vee.$
		\end{tabular}
	\end{center}
\end{itemize}

\section{A categorification of the sheaf of smooth functions}
We will define an enriched category $\mathsf{C}$ associated with the K\"ahler manifold $(M, \omega)$ in Subsection \ref{Subsection 2.1}, and introduce covariantized Poisson brackets $\{\quad, \quad\}$ in Subsection \ref{Subsection 2.2}. The pair $(\mathsf{C}, \{\quad, \quad\})$ is to be `quantized' to an enriched category $\mathsf{DQ}$ in Theorem \ref{Theorem 1.1}. In Subsection \ref{Subsection 2.3}, we will define deformation quantization with separation of variables of $(\mathsf{C}, \{\quad, \quad\})$. We will end this section by a discussion on tensor products with Hermitian holomorphic line bundles as auto-equivalences of $\mathsf{C}$ and as transformations of deformation quantizations of $(\mathsf{C}, \{\quad, \quad\})$ in Subsection \ref{Subsection 2.4}.

\subsection{The classical category of Hermitian holomorphic vector bundles}
\label{Subsection 2.1}
\quad\par
The enriched category to be `quantized' in this paper is defined as follows.

\begin{definition}
	The \emph{classical category of Hermitian holomorphic vector bundles over} $M$, denoted by $\mathsf{C}$, is the category enriched over $\mathsf{Sh}(M)$ given as follows:
	\begin{itemize}
		\item objects in $\mathsf{C}$ are Hermitian holomorphic vector bundles over $M$;
		\item for any objects $E_1, E_2$ in $\mathsf{C}$ and open subset $U$ of $M$,
		\begin{equation*}
			\operatorname{Hom}_{\mathsf{C}} (E_1, E_2)(U) = \mathcal{C}^\infty(U, \operatorname{Hom}(E_1, E_2));
		\end{equation*}
		\item for any objects $E_1, E_2, E_3$ in $\mathsf{C}$, the composition
		\begin{equation*}
			\operatorname{Hom}_{\mathsf{C}} (E_2, E_3) \otimes_\mathbb{C} \operatorname{Hom}_{\mathsf{C}} (E_1, E_2) \to \operatorname{Hom}_{\mathsf{C}} (E_1, E_3)
		\end{equation*}
		is the usual composition of smooth sections of hom-bundles.
	\end{itemize}
\end{definition}

If $E = M \times \mathbb{C}$ is the trivial Hermitian holomorphic line bundle over $M$, then the endomorphism sheaf $\operatorname{Hom}_\mathsf{C}(E, E)$ equipped with compositions is canonically isomorphic to the sheaf of algebras $\mathcal{C}_M^\infty$. Hence, we regard $\mathsf{C}$ as a `categorification' of the sheaf $\mathcal{C}_M^\infty$.

\begin{remark}
	For two objects $E_1, E_2$ in $\mathsf{C}$, as the hom-bundle $\operatorname{Hom}(E_1, E_2)$ is independent of the Hermitian holomorphic structures on $E_1, E_2$, so is $\operatorname{Hom}_\mathsf{C}(E_1, E_2)$. Indeed, if the underlying smooth complex vector bundles of $E_1, E_2$ are equal, then $E_1, E_2$ are isomorphic in $\mathsf{C}$.
\end{remark}

The reason why we require Hermitian holomorphic structures in the definition of $\mathsf{C}$ is as follows. As $(M, \omega)$ is a K\"ahler manifold, one can consider a deformation quantization $(\mathcal{C}_M^\infty[[\hbar]], \star)$ of $(M, \omega)$ \emph{with separation of variables} (in the sense of \cite{Kar1996}) - for an open subset $U$ of $M$ and $f, g \in \mathcal{C}^\infty(U, \mathbb{C})$, if $g$ is holomorphic or $f$ is anti-holomorphic, then
\begin{equation*}
	f \star g = fg.
\end{equation*}
One of the goals of this paper is to construct a `categorification' of deformation quantizations with separation of variables. As objects in $\mathsf{C}$ are Hermitian holomorphic, via Chern connections, holomorphicity and anti-holomorphicity of morphisms in $\mathsf{C}$ are well defined.

\subsection{Covariantized Poisson brackets}
\label{Subsection 2.2}
\quad\par
Recall that the symplectic form $\omega$ on $M$ induces a Poisson bracket $\{\quad, \quad\}$ on $\mathcal{C}^\infty(M, \mathbb{C})$. If we write $\omega = \omega_{ij} dx^i \wedge dx^j$ in local real coordinates $(x^1, ..., x^{2n})$ and let $(\omega^{ij})$ be the inverse of $(\omega_{ij})$, then the Poisson bracket is given by $\{f, g\} = \omega^{ij} \tfrac{\partial f}{\partial x^i} \tfrac{\partial g}{\partial x^j}$. It is generalized to a structure for each pair of Hermitian holomorphic vector bundles, known as a covariantized Poisson bracket \cite{AdaIshKan2023}.

\begin{definition}
	Let $E_1, E_2$ be Hermitian holomorphic vector bundles over $M$. The \emph{covariantized Poisson bracket} for the pair $(E_1, E_2)$ is defined as the $\mathbb{C}$-bilinear map
	\begin{align*}
		\{\quad, \quad\}: \mathcal{C}^\infty(M, E_1) \times \mathcal{C}^\infty(M, E_2) & \to \mathcal{C}^\infty(M, E_1 \otimes E_2)\\
		(s_1, s_2) & \mapsto \{s_1, s_2\} := \omega^{ij} \nabla_{\partial_{x^i}}^{E_1} s_1 \otimes \nabla_{\partial_{x^j}}^{E_2} s_2.
	\end{align*}
\end{definition}

Note that the hom-bundle between two Hermitian holomorphic vector bundles is itself a Hermitian holomorphic vector bundle. Compositions in $\mathsf{C}$ are compatible with covariantized Poisson brackets in the following sense.

\begin{proposition}
	Let $E, E_1, E_2, E_3$ be Hermitian holomorphic vector bundles over $M$. Then for all open subset $U$ of $M$, $s \in \mathcal{C}^\infty(U, E)$, $\phi \in \operatorname{Hom}_{\mathsf{C}} (E_1, E_2)(U)$ and $\psi \in \operatorname{Hom}_{\mathsf{C}} (E_2, E_3)(U)$,
	\begin{align*}
		\{ s, \psi \circ \phi \} = & \{ s, \psi \} \circ (\operatorname{Id}_E \otimes \phi) + (\operatorname{Id}_E \otimes \psi) \circ \{ s, \phi \},\\
		\{ \psi \circ \phi, s \} = & \{ \psi, s \} \circ (\phi \otimes \operatorname{Id}_E) + (\psi \otimes \operatorname{Id}_E) \circ \{ \phi, s \}.
	\end{align*}
\end{proposition}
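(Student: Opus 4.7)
The plan is to prove both identities in parallel by reducing them to a single application of the Leibniz rule for the Chern connection on a hom-bundle. Throughout, I will fix local real coordinates $(x^1, \ldots, x^{2n})$ on $U$ and use the inverse $(\omega^{ij})$ of $(\omega_{ij})$ as in the definition of the covariantized Poisson bracket.

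First, I would recall the standard fact that if $\nabla^{E_i}$ are the Chern connections on Hermitian holomorphic vector bundles $E_1, E_2, E_3$, then the induced Chern connections on the hom-bundles satisfy the Leibniz rule
\begin{equation*}
\nabla^{\operatorname{Hom}(E_1, E_3)}_X (\psi \circ \phi) = \bigl(\nabla^{\operatorname{Hom}(E_2, E_3)}_X \psi\bigr) \circ \phi + \psi \circ \bigl(\nabla^{\operatorname{Hom}(E_1, E_2)}_X \phi\bigr)
\end{equation*}
for any vector field $X$ on $U$. This is a routine consequence of viewing $\psi \circ \phi$ as the contraction of $\psi \otimes \phi$ against the evaluation pairing and using that Chern connections are natural with respect to tensor products and duals.

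Next, I would directly expand the left-hand side of the first identity using the definition of $\{\,,\,\}$:
\begin{equation*}
\{s, \psi \circ \phi\} = \omega^{ij}\, \nabla^E_{\partial_{x^i}} s \otimes \nabla^{\operatorname{Hom}(E_1, E_3)}_{\partial_{x^j}} (\psi \circ \phi).
\end{equation*}
Applying the Leibniz rule above to the second tensor factor and distributing the tensor product over the sum, the right-hand side splits into two summands. The first summand is $\omega^{ij}\, \nabla^E_{\partial_{x^i}} s \otimes \bigl[(\nabla_{\partial_{x^j}} \psi) \circ \phi\bigr]$, which is exactly the image of $\{s, \psi\} \in \mathcal{C}^\infty(U, E \otimes \operatorname{Hom}(E_2, E_3))$ under post-composition with $\phi$ in the second tensor factor, i.e.\ $\{s, \psi\} \circ (\operatorname{Id}_E \otimes \phi)$. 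The second summand is analogously identified with $(\operatorname{Id}_E \otimes \psi) \circ \{s, \phi\}$. The second identity is proved by the symmetric calculation, expanding $\{\psi \circ \phi, s\}$ and applying the Leibniz rule to the first tensor factor instead.

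The main (and essentially only) conceptual point is the compatibility between the $\otimes$ that appears in the output of $\{\,,\,\}$ and the $\circ$ that appears in the compositions on the right-hand side; this is just the natural identification $E \otimes \operatorname{Hom}(E_i, E_j) \cong \operatorname{Hom}(E_i, E \otimes E_j)$ together with the fact that composition on a hom-bundle is $\mathcal{C}^\infty_M$-bilinear, so keeping careful bookkeeping of which tensor slot the Leibniz rule acts in immediately pairs each summand with its desired counterpart. I do not expect any genuine obstacle beyond this bookkeeping.
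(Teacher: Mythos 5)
Your proof is correct. The paper states this proposition without any proof, treating it as a routine consequence of the definitions, and the argument you give -- the Leibniz rule for the induced Chern connections on hom-bundles applied to the composition $\psi \circ \phi$, followed by distributing the remaining tensor factor $\omega^{ij}\,\nabla^{E}_{\partial_{x^i}} s$ over the two resulting terms -- is exactly the implicit verification the author has in mind. The only point worth being explicit about, which you correctly flag, is that the Chern connection of the induced Hermitian holomorphic structure on $\operatorname{Hom}(E_i, E_j)$ coincides with the tensor-product connection built from $\nabla^{E_i}$ and $\nabla^{E_j}$ (by uniqueness of Chern connections), so that the composition Leibniz rule is available; the rest is bookkeeping of tensor slots, as you say.
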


If $f \in \mathcal{C}^\infty(M, \mathbb{C})$ is regarded as a smooth section of the trivial Hermitian holomorphic line bundle $M \times \mathbb{C}$ and $s \in \mathcal{C}^\infty(M, E)$ is a smooth section of a Hermitian holomorphic vector bundle $E$ over $M$, then $\{f, s\} = -\{s, f\} = \nabla_{X_f}^E s$, where $X_f$ is the $\omega$-Hamiltonian vector field of $f$. In particular, the corviantized Poisson bracket for the pair $(M \times \mathbb{C}, M \times \mathbb{C})$ reduces to the Poisson bracket on $M$ induced by $\omega$. Therefore, we can regard $\mathsf{C}$ equipped with the collection of covariantized Poisson brackets for all the pairs of the form $(M \times \mathbb{C}, \operatorname{Hom}(E_i, E_j))$ as a `categorification' of the sheaf of Poisson algebras $(\mathcal{C}_M^\infty, \{\quad, \quad\})$.

\subsection{Categorical deformation quantization with separation of variables}
\label{Subsection 2.3}
\quad\par
We now define deformation quantizations of $(\mathsf{C}, \{\quad, \quad\})$.

\begin{definition}
	\label{Definition 2.5}
	A \emph{deformation quantization of} $(\mathsf{C}, \{\quad, \quad\})$ is a category $\mathsf{C}_\hbar$ enriched over $\mathsf{Sh}_\hbar(M)$ which satisfies the following conditions:
	\begin{itemize}
		\item objects in $\mathsf{C}_\hbar$ are the same as those in $\mathsf{C}$;
		\item for any objects $E_1, E_2$ in $\mathsf{C}_\hbar$ and open subset $U$ of $M$,
		\begin{equation*}
			\operatorname{Hom}_{\mathsf{C}_\hbar} (E_1, E_2)(U) = \operatorname{Hom}_{\mathsf{C}} (E_1, E_2)(U)[[\hbar]];
		\end{equation*}
		\item (\emph{classical limit}) for any objects $E_1, E_2$ in $\mathsf{C}_\hbar$, open subset $U$ of $M$, $\phi \in \operatorname{Hom}_{\mathsf{C}} (E_1, E_2)(U)$ and $\psi \in \operatorname{Hom}_{\mathsf{C}} (E_2, E_3)(U)$,
		\begin{equation*}
			\psi \star \phi = \psi \phi \pmod \hbar,
		\end{equation*}
		where $\psi \star \phi$ (resp. $\psi \phi$) denotes the composition of $\psi$ and $\phi$ in $\mathsf{C}_\hbar$ (resp. $\mathsf{C}$);
		\item (\emph{semi-classical limit}) for any objects $E_1, E_2$ in $\mathsf{C}_\hbar$, open subset $U$ of $M$, $f \in \mathcal{C}^\infty(U, \mathbb{C})$ and $\phi \in \operatorname{Hom}_{\mathsf{C}} (E_1, E_2)(U)$,
		\begin{equation*}
			(f \operatorname{Id}_{E_2}) \star \phi - \phi \star (f\operatorname{Id}_{E_1}) = \hbar \{f, \phi\} \pmod {\hbar^2},
		\end{equation*}
		where $\{f, \phi\} := \nabla_{X_f}^{\operatorname{Hom}(E_1, E_2)} \phi$ and $X_f$ is the $\omega$-Hamiltonian vector field of $f$.
	\end{itemize}
\end{definition}

\begin{definition}
	\label{Definition 2.6}
	A deformation quantization $\mathsf{C}_\hbar$ of $(\mathsf{C}, \{ \quad, \quad \})$ is said to be \emph{with separation of variables} if for any objects $E_1, E_2$ in $\mathsf{C}_\hbar$, open subset $U$ of $M$, $\phi \in \operatorname{Hom}_{\mathsf{C}_\hbar}(E_1, E_2)(U)$ and $\psi \in \operatorname{Hom}_{\mathsf{C}_\hbar}(E_2, E_3)(U)$, if $\psi$ is holomorphic or $\phi$ is anti-holomorphic, then
	\begin{equation*}
		\psi \star \phi = \psi \phi,
	\end{equation*}
	where $\psi \star \phi$ (resp. $\psi \phi$) denotes the composition of $\psi$ and $\phi$ in $\mathsf{C}_\hbar$ (resp. $\mathsf{C}$).
\end{definition}

Let $\mathsf{C}_\hbar$ be a deformation quantization with separation of variables of $(\mathsf{C}, \{\quad, \quad\})$. Then for the trivial Hermitian holomorphic line bundle $E = M \times \mathbb{C}$ over $M$, the composition
\begin{equation*}
	\star: \operatorname{Hom}_{\mathsf{C}_\hbar}(E, E) \otimes_{\mathbb{C}[[\hbar]]} \operatorname{Hom}_{\mathsf{C}_\hbar}(E, E) \to \operatorname{Hom}_{\mathsf{C}_\hbar}(E, E)
\end{equation*}
is a star product with separation of variables on $(M, \omega)$. This suggests that $\mathsf{C}_\hbar$ is a `categorification' of deformation quantizations of $(M, \omega)$.

\subsection{Tensor product with a Hermitian holomorphic line bundle}
\label{Subsection 2.4}
\quad\par
Let $L$ be an object in $\mathsf{C}$ which is of rank $1$, i.e. a Hermitian holomorphic line bundle over $M$. For any objects $E_1, E_2$ in $\mathsf{C}$, we have a canonical isomorphism in $\mathsf{Sh}(M)$:
\begin{equation*}
	\operatorname{Hom}_\mathsf{C}(E_1, E_2) \cong \operatorname{Hom}_\mathsf{C}(E_1 \otimes L, E_2 \otimes L),
\end{equation*}
which identifies $\nabla^{\operatorname{Hom}(E_1, E_2)}$ with $\nabla^{\operatorname{Hom}(E_1 \otimes L, E_2 \otimes L)}$. In other words, the tensor product with $L$ forms an auto-equivalence of the enriched category $\mathsf{C}$:
\begin{equation}
	\otimes L: \mathsf{C} \to \mathsf{C},
\end{equation}
which preserves the covariantized Poisson brackets on the morphism sheaves in $\mathsf{C}$.\par
Now suppose $\mathsf{C}_\hbar$ is a deformation quantization with separation of variables of $(\mathsf{C}, \{\quad, \quad\})$ and $\star$ is the composition in $\mathsf{C}_\hbar$. Note that for any objects $E_1, E_2$ in $\mathsf{C}_\hbar$, there is a canonical isomorphism of sheaves of $\mathbb{C}[[\hbar]]$-modules on $M$:
\begin{equation*}
	\operatorname{Hom}_{\mathsf{C}_\hbar}(E_1, E_2) \cong \operatorname{Hom}_{\mathsf{C}_\hbar}(E_1 \otimes L, E_2 \otimes L).
\end{equation*}
These isomorphisms induce a category $\mathsf{C}_{\hbar, L}$ enriched over $\mathsf{Sh}_\hbar(M)$ which is completely determined by the following conditions:
\begin{itemize}
	\item objects and morphism sheaves in $\mathsf{C}_{\hbar, L}$ are the same as those in $\mathsf{C}_\hbar$; and
	\item the tensor product with $L$ forms an equivalence of enriched categories:
	\begin{equation}
		\otimes L: \mathsf{C}_\hbar \to \mathsf{C}_{\hbar, L}.
	\end{equation}
\end{itemize}
We can easily prove the following proposition.

\begin{proposition}
	Let $L$ be a Hermitian holomorphic line bundle over $M$. If $\mathsf{C}_\hbar$ is a deformation quantization with separation of variables of $(\mathsf{C}, \{\quad, \quad\})$, then so is $\mathsf{C}_{\hbar, L}$.
\end{proposition}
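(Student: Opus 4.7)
The plan is to verify the three defining properties of a deformation quantization with separation of variables (classical limit, semi-classical limit, and separation of variables, as in Definitions \ref{Definition 2.5} and \ref{Definition 2.6}) for the composition $\star_L$ in $\mathsf{C}_{\hbar, L}$ by transporting the corresponding properties of $\star$ in $\mathsf{C}_\hbar$ through the canonical enriched equivalence $\otimes L: \mathsf{C}_\hbar \to \mathsf{C}_{\hbar, L}$. Writing $\Phi_{E_i, E_j}: \operatorname{Hom}_{\mathsf{C}_\hbar}(E_i, E_j) \to \operatorname{Hom}_{\mathsf{C}_\hbar}(E_i \otimes L, E_j \otimes L)$ for the canonical isomorphism $\phi \mapsto \phi \otimes \operatorname{Id}_L$, the very definition of $\mathsf{C}_{\hbar, L}$ yields
\begin{equation*}
	\psi \star_L \phi = \Phi_{E_1, E_3}^{-1}\bigl( (\psi \otimes \operatorname{Id}_L) \star (\phi \otimes \operatorname{Id}_L) \bigr)
\end{equation*}
for $\phi \in \operatorname{Hom}_{\mathsf{C}_\hbar}(E_1, E_2)(U)$ and $\psi \in \operatorname{Hom}_{\mathsf{C}_\hbar}(E_2, E_3)(U)$, so each axiom for $\star_L$ will follow from the corresponding axiom for $\star$ together with suitable compatibility properties of $\Phi$.

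First, the classical limit is immediate: reducing the right-hand side modulo $\hbar$ gives $(\psi \phi) \otimes \operatorname{Id}_L$, whose preimage under $\Phi$ is $\psi \phi$. For the semi-classical limit I would invoke two compatibilities already recorded in Subsection \ref{Subsection 2.4}, namely that $\Phi$ sends $f \operatorname{Id}_{E_i}$ to $f \operatorname{Id}_{E_i \otimes L}$ and that it intertwines $\nabla^{\operatorname{Hom}(E_1, E_2)}$ with $\nabla^{\operatorname{Hom}(E_1 \otimes L, E_2 \otimes L)}$. These combine to give $\{f, \phi \otimes \operatorname{Id}_L\} = \{f, \phi\} \otimes \operatorname{Id}_L$, and the semi-classical limit axiom for $\star$ applied to $f \operatorname{Id}_{E_2 \otimes L}$ and $\phi \otimes \operatorname{Id}_L$ transports verbatim to the desired identity for $\star_L$ after applying $\Phi^{-1}$.

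For separation of variables, the key observation is that $\operatorname{Id}_L$ is a parallel section of $\operatorname{End}(L)$ with respect to $\nabla^{\operatorname{End}(L)}$ (since $\operatorname{End}(L)$ is canonically the trivial line bundle with its flat Chern connection), so $\operatorname{Id}_L$ is simultaneously holomorphic and anti-holomorphic. Consequently $\phi \otimes \operatorname{Id}_L$ is holomorphic, respectively anti-holomorphic, exactly when $\phi$ is; applying the separation-of-variables property of $\star$ to $\psi \otimes \operatorname{Id}_L$ and $\phi \otimes \operatorname{Id}_L$ and pulling back through $\Phi^{-1}$ then yields $\psi \star_L \phi = \psi \phi$. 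There is no genuine obstacle in this argument: the whole statement is essentially a bookkeeping exercise, and the only care required is to confirm that $\Phi$ preserves every piece of structural data (identities, scalar multiplication, Chern connections, and (anti-)holomorphicity) appearing in Definitions \ref{Definition 2.5} and \ref{Definition 2.6}.
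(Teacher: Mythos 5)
The paper offers no proof of this proposition (it is dismissed with ``we can easily prove''), and your transport-of-structure argument through the canonical isomorphisms $\phi \mapsto \phi \otimes \operatorname{Id}_L$ --- which, as recorded in Subsection 2.4, identify the Chern connections and hence preserve identities, (anti-)holomorphicity and the brackets $\{f, \phi\}$ --- is precisely the intended verification and is correct. The one slip is a matter of direction: the paper defines $\mathsf{C}_{\hbar, L}$ so that $\otimes L$ is a functor \emph{from} $\mathsf{C}_\hbar$ \emph{to} $\mathsf{C}_{\hbar, L}$, so its composition is $\psi \star_L \phi = \bigl( (\psi \otimes \operatorname{Id}_{L^\vee}) \star (\phi \otimes \operatorname{Id}_{L^\vee}) \bigr) \otimes \operatorname{Id}_L$ rather than your formula; since your formula instead computes the composition of $\mathsf{C}_{\hbar, L^\vee}$ and the proposition is stated for an arbitrary Hermitian holomorphic line bundle, this relabeling does not affect the argument.
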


In particular, if $\mathsf{C}_\hbar$ is a deformation quantization with separation of variables of $(\mathsf{C}, \{\quad, \quad\})$ and $L$ is an object in $\mathsf{C}_\hbar$ of rank $1$, then $\operatorname{Hom}_{\mathsf{C}_\hbar}(L, L)$ equipped with the compositions is a deformation quantization with separation of variables of $(M, \omega)$.

\section{Formal geometry on K\"ahler manifolds via Kapranov's $L_\infty$-structures}
The goal of this section is to give a review on formal geometry on a K\"ahler manifold $(M, \omega)$ via Kapranov's $L_\infty$-structures so as to make this paper self-contained. Subsection \ref{Subsection 3.1} is a recollection of formal Dolbeault complexes. In Subsections \ref{Subsection 3.2} and \ref{Subsection 3.3}, we will introduce a class of flat connections called Kapranov's connections, and study quasi-isomorphisms among relevant cochain complexes which are important for both Sections \ref{Section 4} and \ref{Section 5}. In Subsection \ref{Subsection 3.4}, we will discuss the bundle of holomorphic differential operators between two holomorphic vector bundles.

\subsection{The formal Dolbeault complex of $\mathcal{W}_{\operatorname{cl}}^{1, 0} \otimes E$}
\label{Subsection 3.1}
\quad\par
Suppose that $E$ is a complex vector bundle over $M$. We can define three $\mathcal{C}^\infty(M, \mathbb{C})$-linear operators $\delta^{1, 0}, (\delta^{1, 0})^{-1}, \pi_{0, *}$ on $\Omega^*(M, \mathcal{W}_{\operatorname{cl}}^{1, 0} \otimes E)$, where $\mathcal{W}_{\operatorname{cl}}^{1, 0} = \widehat{\operatorname{Sym}} T^\vee$, as follows. For a local $\mathcal{W}_{\operatorname{cl}}^{1, 0} \otimes E$-valued form $s = (w^{\mu_1} \cdots w^{\mu_l} \otimes e) dz^{\alpha_1} \wedge \cdots \wedge dz^{\alpha_p} \wedge d\overline{z}^{\beta_1} \wedge \cdots \wedge d\overline{z}^{\beta_q}$,
\begin{align*}
	\delta^{1, 0} s = dz^\mu \wedge \frac{\partial s}{\partial w^\mu},\quad
	(\delta^{1, 0})^{-1} s =
	\begin{cases}
		\frac{1}{l+p} w^\mu \iota_{\partial_{z^\mu}} s & \text{ if } l + p > 0;\\
		0 & \text{ if } l + p = 0,
	\end{cases}
	\quad \pi_{0, *}(s) =
	\begin{cases}
		0 & \text{ if } l + p > 0;\\
		s & \text{ if } l + p = 0,
	\end{cases}
\end{align*}
Here, $(z^1, ..., z^n)$ are local complex coordinates on $M$, $e$ is a local section of $E$, and we denote by $w^\mu$ the covector $dz^\mu$ regarded as a local section of $\mathcal{W}_{\operatorname{cl}}^{1, 0}$. We also suppress the notations of symmetric product. Clearly, $(\Omega^*(M, \mathcal{W}_{\operatorname{cl}}^{1, 0} \otimes E), \delta^{1, 0})$ forms a cochain complex and we call it the \emph{formal Dolbeault complex} of $\mathcal{W}_{\operatorname{cl}}^{1, 0} \otimes E$. The following equality holds on $\Omega^*(M, \mathcal{W}_{\operatorname{cl}}^{1, 0} \otimes E)$:
\begin{equation}
	\label{Equation 3.1}
	\operatorname{Id} - \pi_{0, *} = \delta^{1, 0} \circ (\delta^{1, 0})^{-1} + (\delta^{1, 0})^{-1} \circ \delta^{1, 0},
\end{equation}
implying that the following inclusion is a quasi-isomorphism:
\begin{equation*}
	(\Omega^{0, *}(M, E), 0) \hookrightarrow (\Omega^*(M, \mathcal{W}_{\operatorname{cl}}^{1, 0} \otimes E), \delta^{1, 0}).
\end{equation*}
Now we state two fundamental lemmas which will be used in later (sub)sections.

\begin{lemma}
	\label{Lemma 3.1}
	Let $E$ be a holomorphic vector bundle over $M$. Then the equality
	\begin{equation*}
		\operatorname{Id} - \pi_{0, *} = (\delta^{1, 0} - \overline{\partial}) \circ (\delta^{1, 0})^{-1} + (\delta^{1, 0})^{-1} \circ (\delta^{1, 0} - \overline{\partial})
	\end{equation*}
	holds on $\Omega^*(M, \mathcal{W}_{\operatorname{cl}}^{1, 0} \otimes E)$. In particular, the inclusion
	\begin{equation*}
		(\Omega^{0, *}(M, E), \overline{\partial}) \hookrightarrow (\Omega^*(M, \mathcal{W}_{\operatorname{cl}}^{1, 0} \otimes E), \overline{\partial} - \delta^{1, 0})
	\end{equation*}
	is a quasi-isomorphism.
\end{lemma}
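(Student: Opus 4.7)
The plan is to derive the new homotopy identity from the classical one in (\ref{Equation 3.1}) by establishing that $\overline{\partial}$ anti-commutes with $(\delta^{1,0})^{-1}$ on $\Omega^*(M, \mathcal{W}_{\operatorname{cl}}^{1,0} \otimes E)$; the quasi-isomorphism statement will then follow from the usual strong deformation retract argument using this homotopy together with the projection $\pi_{0,*}$.

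First observe that $\overline{\partial}$ is defined on $\Omega^*(M, \mathcal{W}_{\operatorname{cl}}^{1,0} \otimes E)$: since $T^\vee = (T^{1,0}M)^\vee$ and $E$ are holomorphic, so is $\mathcal{W}_{\operatorname{cl}}^{1,0} \otimes E = \widehat{\operatorname{Sym}} T^\vee \otimes E$. To prove $\overline{\partial} \circ (\delta^{1,0})^{-1} + (\delta^{1,0})^{-1} \circ \overline{\partial} = 0$, I would work in a local holomorphic frame $\{w^{\mu_1} \cdots w^{\mu_l} \otimes e_a\}$, writing a typical term as $s = f \cdot \sigma \cdot dz^I \wedge d\overline{z}^J$ with $f$ smooth, $\sigma$ a holomorphic frame element, and $|I| = p$. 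In such a frame $\overline{\partial}$ only differentiates $f$, producing a factor $d\overline{z}^\nu$ that must be commuted past $dz^I$; meanwhile $(\delta^{1,0})^{-1}$ is $\mathcal{C}^\infty(M, \mathbb{C})$-linear and acts only on the $w$-factor and the $dz$-part, lowering the $dz$-degree by one. The two orders of composition therefore produce the same expression weighted by signs $(-1)^p$ and $(-1)^{p-1}$, which cancel. Subtracting the resulting anti-commutation identity from (\ref{Equation 3.1}) yields the first assertion of the lemma.

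For the quasi-isomorphism, an analogous symmetric computation gives $\{\overline{\partial}, \delta^{1,0}\} = 0$, so $D := \overline{\partial} - \delta^{1,0}$ squares to zero and $(\Omega^*(M, \mathcal{W}_{\operatorname{cl}}^{1,0} \otimes E), D)$ is genuinely a cochain complex. Setting $h := -(\delta^{1,0})^{-1}$, the first part of the lemma rewrites as $Dh + hD = \operatorname{Id} - \pi_{0,*}$. The image of $\pi_{0,*}$ is precisely $\Omega^{0,*}(M, E)$, on which $\delta^{1,0}$ vanishes, so $D$ restricts to $\overline{\partial}$ there. Thus the inclusion $\iota : (\Omega^{0,*}(M, E), \overline{\partial}) \hookrightarrow (\Omega^*(M, \mathcal{W}_{\operatorname{cl}}^{1,0} \otimes E), D)$ and the projection $\pi_{0,*}$ satisfy $\pi_{0,*} \iota = \operatorname{Id}$ and $\operatorname{Id} - \iota \pi_{0,*} = Dh + hD$, exhibiting a chain deformation retract; in particular, $\iota$ induces an isomorphism on cohomology. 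The only delicate step is the sign bookkeeping in the two anti-commutation identities, but this becomes routine once one fixes a convention for reordering $d\overline{z}$'s past $dz$'s.
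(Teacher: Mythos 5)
Your proof is correct: the identity reduces to (\ref{Equation 3.1}) once one checks that $\overline{\partial}$ anti-commutes with $(\delta^{1,0})^{-1}$ (and with $\delta^{1,0}$), which your local holomorphic-frame computation does, and the deformation-retract argument with $h = -(\delta^{1,0})^{-1}$ correctly yields the quasi-isomorphism. The paper states this lemma without proof, and your argument supplies exactly the intended details, in the same spirit as the surrounding discussion of (\ref{Equation 3.1}).
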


If $E$ is a Hermitian holomorphic vector bundle, we equip $\mathcal{W}_{\operatorname{cl}}^{1, 0} \otimes E$ with the connection induced by the Levi-Civita connection $\nabla$ of $M$ and the Chern connection $\nabla^E$. Here and in the sequel, we denote by $\nabla^{1, 0}$ the $(1, 0)$-part of any connection in the context and let $\widetilde{\nabla}^{1, 0} = (\delta^{1, 0})^{-1} \circ \nabla^{1, 0}$ whenever it is well defined.

\begin{lemma}
	\label{Lemma 3.2}
	Let $E$ be a Hermitian holomorphic vector bundle over $M$. Let $s \in \Omega^*(M, \mathcal{W}_{\operatorname{cl}}^{1, 0} \otimes E)$ and $\widetilde{s} = \sum_{r=0}^\infty (\widetilde{\nabla}^{1, 0})^r s$. If $\nabla^{1, 0} (\delta^{1, 0} s) = 0$, then
	\begin{equation*}
		(\nabla^{1, 0} - \delta^{1, 0}) \widetilde{s} = -\delta^{1, 0} s.
	\end{equation*}
\end{lemma}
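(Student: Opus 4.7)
The plan is to establish three structural identities on $\Omega^*(M, \mathcal{W}_{\operatorname{cl}}^{1, 0} \otimes E)$ and then to finish by induction and a telescoping sum. The identities are: (i) $\nabla^{1,0} \delta^{1,0} + \delta^{1,0} \nabla^{1,0} = 0$; (ii) $(\nabla^{1,0})^2 = 0$; and (iii) $\pi_{0, *} \nabla^{1,0} = 0$. I would verify (i) in local holomorphic coordinates: acting on $w^{\mu_1} \cdots w^{\mu_l} \otimes e$, the anticommutator collects a pure Christoffel piece which vanishes because $\Gamma^\mu_{\nu\rho} dz^\nu \wedge dz^\rho = 0$ by torsion-freeness of the Levi-Civita connection, plus a $\nabla^E$-piece which cancels internally. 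Identity (ii) is immediate from the K\"ahler condition, since the $(2,0)$-parts of both the Levi-Civita and Chern curvatures vanish, so $(\nabla^{1,0})^2 = 0$ on $\widehat{\operatorname{Sym}} T^\vee \otimes E$. Identity (iii) is trivial since $\nabla^{1,0}$ raises $(1,0)$-form degree by one.

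Using (\ref{Equation 3.1}) in the rewritten form $\delta^{1,0}(\delta^{1,0})^{-1} = \operatorname{Id} - \pi_{0,*} - (\delta^{1,0})^{-1}\delta^{1,0}$, I would prove by induction on $r \ge 0$ that $\delta^{1,0} \nabla^{1,0} (\widetilde{\nabla}^{1,0})^r s = 0$. The base case $r = 0$ follows from the hypothesis $\nabla^{1,0}(\delta^{1,0} s) = 0$ together with (i). For the inductive step,
\begin{align*}
\delta^{1,0}(\widetilde{\nabla}^{1,0})^r s
 &= \delta^{1,0}(\delta^{1,0})^{-1} \nabla^{1,0} (\widetilde{\nabla}^{1,0})^{r-1} s \\
 &= \nabla^{1,0} (\widetilde{\nabla}^{1,0})^{r-1} s,
\end{align*}
where the $\pi_{0,*}$-term vanishes by (iii) and the $(\delta^{1,0})^{-1}\delta^{1,0}$-term vanishes by the inductive hypothesis. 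Applying $\nabla^{1,0}$ to both sides and invoking (ii) yields $\nabla^{1,0}\delta^{1,0}(\widetilde{\nabla}^{1,0})^r s = (\nabla^{1,0})^2(\widetilde{\nabla}^{1,0})^{r-1} s = 0$, which via (i) is equivalent to the claim at level $r$. The displayed auxiliary identity $\delta^{1,0}(\widetilde{\nabla}^{1,0})^r s = \nabla^{1,0} (\widetilde{\nabla}^{1,0})^{r-1} s$ for $r \ge 1$ is precisely what is needed to telescope.

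The lemma then follows: since $\widetilde{\nabla}^{1,0}$ raises symmetric degree by one, the series defining $\widetilde{s}$ converges in the $w$-adic topology, and
\begin{align*}
(\nabla^{1,0} - \delta^{1,0}) \widetilde{s}
 &= \sum_{r=0}^{\infty} \nabla^{1,0} (\widetilde{\nabla}^{1,0})^r s - \delta^{1,0} s - \sum_{r=1}^{\infty} \nabla^{1,0} (\widetilde{\nabla}^{1,0})^{r-1} s \\
 &= -\delta^{1,0} s.
\end{align*}
The main obstacle is identity (i): verifying the anticommutativity requires a careful local computation to ensure that the Christoffel contributions from $\nabla$ acting on the $w^\mu$-factors and the contributions from $\nabla^E$ acting on sections of $E$ cancel separately. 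Once (i) and (ii) are in hand, the induction and the telescoping are formal.
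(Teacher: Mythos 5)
Your proof is correct and follows essentially the same route as the paper: both arguments rest on the identities $\delta^{1,0}\nabla^{1,0}=-\nabla^{1,0}\delta^{1,0}$, $(\nabla^{1,0})^2=0$, $\pi_{0,*}\nabla^{1,0}=0$ and (\ref{Equation 3.1}), prove $\delta^{1,0}(\widetilde{\nabla}^{1,0})^{r}s=\nabla^{1,0}(\widetilde{\nabla}^{1,0})^{r-1}s$ by induction, and conclude by telescoping. The only cosmetic difference is that you carry the single statement $\delta^{1,0}\nabla^{1,0}(\widetilde{\nabla}^{1,0})^{r}s=0$ through the induction while the paper alternates between two equivalent assertions, and you spell out the verification of the three structural identities that the paper takes for granted.
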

\begin{proof}
	For any $r \in \mathbb{N}$, define $s_{(r)} = (\widetilde{\nabla}^{1, 0})^r s$. Observe that
	\begin{itemize}
		\item if $\nabla^{1, 0} (\delta^{1, 0} s_{(r)}) = 0$, then $\nabla^{1, 0} s_{(r)} = \delta^{1, 0} s_{(r+1)} + (\delta^{1, 0})^{-1} (\delta^{1, 0} (\nabla^{1, 0} s_{(r)})) = \delta^{1, 0} s_{(r+1)}$ by (\ref{Equation 3.1}) and the equalities $\delta^{1, 0} (\nabla^{1, 0} s_{(r)}) = -\nabla^{1, 0} (\delta^{1, 0} s_{(r)})$ and $\pi_{0, *} \circ \nabla^{1, 0} = 0$; and
		\item if $\nabla^{1, 0} s_{(r)} = \delta^{1, 0} s_{(r+1)}$, then $\nabla^{1, 0} ( \delta^{1, 0} s_{(r+1)}) = (\nabla^{1, 0})^2 s_{(r)} = 0$.
	\end{itemize}
	By hypothesis and induction, $(\nabla^{1, 0} - \delta^{1, 0})\widetilde{s} = \sum_{r=0}^\infty (\nabla^{1, 0}s_{(r)} - \delta^{1, 0}s_{(r+1)}) - \delta^{1, 0} s = -\delta^{1, 0} s$.
\end{proof}

\subsection{The Kapranov's connection $D_{\operatorname{Kap}}^E$ on $\mathcal{W}_{\operatorname{cl}}^{1, 0} \otimes E$}
\label{Subsection 3.2}
\quad\par
For any K\"ahler manifold $M$, Kapranov \cite{Kap1999} discovered an $L_\infty$-algebra structure encoded by higher covariant derivatives of the curvature $\nabla^2 \in \Omega^{1, 1}(M, \operatorname{Hom} (T^\vee, T^\vee))$ of $\nabla$ on $T^\vee$, namely
\begin{equation*}
	I_{(r)} := (\widetilde{\nabla}^{1, 0})^{r-2} (\delta^{1, 0})^{-1} \nabla^2 \in \Omega^{0, 1}(M, \operatorname{Hom}(T^\vee, \operatorname{Sym}^r T^\vee)), \quad \text{for } r \geq 2.
\end{equation*}
Furthermore, he showed that any Hermitian holomorphic vector bundle $E$ over $M$ induces an $L_\infty$-module structure encoded by higher covariant derivatives of the curvature $R^E \in \Omega^{1, 1}(M, \operatorname{End}(E))$ of its Chern connection $\nabla^E$, namely
\begin{equation*}
	I_{(r)}^E := ( \widetilde{\nabla}^{1, 0} )^{r-1} (\delta^{1, 0})^{-1} R^E \in \Omega^{0, 1}(M, \operatorname{Sym}^r T^\vee \otimes \operatorname{End}(E)), \quad \text{for } r \geq 1.
\end{equation*}
Define
\begin{equation*}
	I = \sum_{r=2}^\infty I_{(r)} \in \Omega^{0, 1}(M, \operatorname{Hom}(T^\vee, \mathcal{W}_{\operatorname{cl}}^{1, 0})).
\end{equation*}
It can be regarded as a $\mathcal{C}^\infty(M, \mathbb{C})$-linear map $\Omega^*(M, T^\vee) \to \Omega^*(M, \mathcal{W}_{\operatorname{cl}}^{1, 0})$. We extend it to a graded derivation $\Omega^*(M, \mathcal{W}_{\operatorname{cl}}^{1, 0}) \to \Omega^*(M, \mathcal{W}_{\operatorname{cl}}^{1, 0})$. Taking the tensor product with $\operatorname{Id}_E$, we obtain a $\mathcal{C}^\infty(M, \mathbb{C})$-linear map
\begin{equation*}
	\Omega^*(M, \mathcal{W}_{\operatorname{cl}}^{1, 0} \otimes E) \to \Omega^*(M, \mathcal{W}_{\operatorname{cl}}^{1, 0} \otimes E)
\end{equation*}
which is still denoted by $I$ by abuse of notations. On the other hand, define
\begin{equation*}
	I^E = \sum_{r=1}^\infty I_{(r)}^E \in \Omega^{0, 1}(M, \mathcal{W}_{\operatorname{cl}}^{1, 0} \otimes \operatorname{End}(E)).
\end{equation*}
It can be regarded as a $\mathcal{C}^\infty(M, \mathbb{C})$-linear map $\Omega^*(M, E) \to \Omega^*(M, \mathcal{W}_{\operatorname{cl}}^{1, 0} \otimes E)$. Extend it to a map
\begin{equation*}
	\Omega^*(M, \mathcal{W}_{\operatorname{cl}}^{1, 0} \otimes E) \to \Omega^*(M, \mathcal{W}_{\operatorname{cl}}^{1, 0} \otimes E)
\end{equation*}
by $\mathcal{C}^\infty(M, \mathcal{W}_{\operatorname{cl}}^{1, 0})$-linearity, which is still denoted by $I^E$ by abuse of notations.\par
Now, we quote Kapranov's theorems as follows (see also \cite{Yu2015}).

\begin{theorem}[Theorems 2.6 and 2.7.6 in \cite{Kap1999}]
	Let $M$ be a K\"ahler manifold.
	\begin{itemize}
		\item The following equality holds on $\Omega^{0, *}(M, \mathcal{W}_{\operatorname{cl}}^{1, 0})$:
		\begin{equation}
			\label{Equation 3.2}
			( \overline{\partial} + I )^2 = 0.
		\end{equation}
		\item Let $E$ be a Hermitian holomorphic vector bundle over $M$. Then on $\Omega^{0, *}(M, \mathcal{W}_{\operatorname{cl}}^{1, 0} \otimes E)$,
		\begin{equation}
			\label{Equation 3.3}
			( \overline{\partial} + I + I^E )^2 = 0.
		\end{equation}
	\end{itemize}
\end{theorem}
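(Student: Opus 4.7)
The plan is to exhibit $\overline{\partial} + I + I^E$ as the $(0,1)$-type component of a flat connection
$$D := \overline{\partial} + \nabla^{1,0} - \delta^{1,0} + I + I^E$$
on $\Omega^*(M, \mathcal{W}_{\operatorname{cl}}^{1,0} \otimes E)$, where $\nabla^{1,0}$ denotes the $(1,0)$-part of the connection induced by Levi-Civita on $\mathcal{W}_{\operatorname{cl}}^{1,0}$ and by $\nabla^E$ on $E$, and then to prove $D^2 = 0$. Since $I$ and $I^E$ are of pure form-bidegree $(0,1)$, decomposing $D^2$ by form bidegree produces a $(0,2)$-component equal precisely to $\overline{\partial}^2 + \{\overline{\partial}, I + I^E\} + (I + I^E)^2 = (\overline{\partial} + I + I^E)^2$; its vanishing is (3.3). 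Specialising to $E = M \times \mathbb{C}$ kills $I^E$ and recovers (3.2). Thus everything reduces to flatness of $D$.

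First compute the unperturbed square $D_{(0)}^2$, where $D_{(0)} := \overline{\partial} + \nabla^{1,0} - \delta^{1,0}$. Using $(\delta^{1,0})^2 = 0$, $\{\overline{\partial}, \delta^{1,0}\} = 0$, the Kähler identities $(\nabla^{1,0})^2 = 0$ (curvature is pure $(1,1)$) and $\{\nabla^{1,0}, \delta^{1,0}\} = 0$ (equivalent to the symmetry $\Gamma^\mu_{\alpha \nu} = \Gamma^\mu_{\nu \alpha}$ of the Christoffel symbols in local holomorphic coordinates), one gets $D_{(0)}^2 = \{\overline{\partial}, \nabla^{1,0}\} = \nabla^2|_{T^\vee} + R^E$, a $(1,1)$-form-valued operator acting as a derivation on $\mathcal{W}_{\operatorname{cl}}^{1,0}$ and as multiplication on $E$, of symmetric-algebra weight $0$. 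Now write $D = D_{(0)} + A$ and expand $(D_{(0)} + A)^2 = 0$ by symmetric-algebra weight. The leading equation $\{\delta^{1,0}, A^{(1)}\} = \nabla^2|_{T^\vee} + R^E$ is solved in the Hodge gauge $(\delta^{1,0})^{-1}A^{(1)} = 0$ by $A^{(1)} = (\delta^{1,0})^{-1}(\nabla^2|_{T^\vee} + R^E) = I_{(2)} + I^E_{(1)}$. At higher weights, the equation reads $\{\delta^{1,0}, A^{(r+1)}\} = \text{(obstruction involving $\nabla^{1,0} A^{(r)}$ and brackets $\{A^{(i)}, A^{(j)}\}$)}$, and Lemma 3.2 applied to $s = A^{(r)}$ yields $A^{(r+1)} = \widetilde{\nabla}^{1,0} A^{(r)}$, thereby generating precisely the formulas $I_{(r)} = (\widetilde{\nabla}^{1,0})^{r-2}(\delta^{1,0})^{-1}\nabla^2$ and $I^E_{(r)} = (\widetilde{\nabla}^{1,0})^{r-1}(\delta^{1,0})^{-1}R^E$ from the definitions of $I$ and $I^E$. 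Completeness in symmetric weight makes the formal sum $A = I + I^E$ converge, giving $D^2 = 0$.

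The main obstacle is verifying inductively that the residual obstruction at each stage actually lies in $\operatorname{Im}(\delta^{1,0})$, equivalently in $\operatorname{Ker}(\pi_{0, *})$ by (3.1); this is what permits solving for $A^{(r+1)}$ via $(\delta^{1,0})^{-1}$. At weight $0$ this follows from the Kähler Bianchi identities $\nabla \nabla^2 = 0$ and $\nabla R^E = 0$. Inductively, one must track both the $(1,1)$-contribution from $\{D_{(0)}, A^{(r)}\}$ and the $(0,2)$-contributions from brackets $\{A^{(i)}, A^{(j)}\}$ with $i + j = r + 1$; the key point is that in Hodge gauge these are automatically $\pi_{0, *}$-trivial, and $\delta^{1,0}$-closed by the graded Jacobi identity $[D, D^2] = 0$, so by (3.1) they are $\delta^{1,0}$-exact, which is what makes the next stage solvable. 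In particular, the $(0,2)$-components of the obstruction are not merely ignored during the $(1,1)$-iteration but are forced to zero by the same mechanism, which is what ultimately delivers $(\overline{\partial} + I + I^E)^2 = 0$. A more conceptual alternative would be to invoke Kapranov's formal exponential map on the Kähler manifold $M$ (which encodes his $L_\infty$-structure geometrically) and pull back the $\overline{\partial}$-operators on $\mathcal{O}_M$ and $\mathcal{E}$: these pullbacks are exactly $\overline{\partial} + I$ and $\overline{\partial} + I + I^E$ on formal jets, and their squares vanish by functoriality.
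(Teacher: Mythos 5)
Your proposal takes a genuinely different route from the paper: the paper does not prove this statement at all, but quotes it from \cite{Kap1999}, and both Kapranov's original argument and the paper's surrounding machinery (the splitting $\eta_E$ and the cochain isomorphism \eqref{Equation 3.5}, cf.\ Proposition \ref{Proposition 3.4} and the remark following it) go through the jet-bundle/formal-exponential identification --- which is exactly the ``more conceptual alternative'' you relegate to your final sentence. Your primary proposal instead proves flatness of $D = \overline{\partial} + \nabla^{1,0} - \delta^{1,0} + I + I^E$ (which is precisely $D_{\operatorname{Kap}}^E$ from \eqref{Equation 3.4}) by a Fedosov-type recursion on symmetric weight. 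The architecture is sound: the bidegree bookkeeping is correct, the $(2,0)$- and $(1,1)$-parts of $D^2$ vanish as you say (your $(1,1)$-computation is exactly the content of the paper's Lemma \ref{Lemma 4.3}, proved from Lemma \ref{Lemma 3.2} and the Bianchi identities --- though ``$\nabla R^E = 0$'' should be the form identity $\nabla^{1,0} R^E = 0$ coming from $d^\nabla R^E = 0$, not covariant constancy of the curvature), and the quadratic brackets $\{A^{(i)}, A^{(j)}\}$ correctly drop out of the $(1,1)$-recursion by bidegree, so the recursion reproduces the defining formulas for $I_{(r)}$ and $I_{(r)}^E$ with no quadratic corrections.

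The gap sits at the crux: the vanishing of the $(0,2)$-part $C := (\overline{\partial} + I + I^E)^2$, which \emph{is} the theorem. You argue that $C$ is $\pi_{0,*}$-trivial and $\delta^{1,0}$-closed by the Jacobi identity, ``so by \eqref{Equation 3.1} $\delta^{1,0}$-exact, which is what makes the next stage solvable'' --- but exactness is not vanishing, and solvability of the $(1,1)$-recursion is a separate issue from killing the $(0,2)$-obstruction. Moreover $C$ is not $\delta^{1,0}$-closed on the nose: the $(1,2)$-component of the Bianchi identity $[D, D^2] = 0$ gives $[\delta^{1,0}, C] = [\nabla^{1,0}, C]$, so closedness of the weight-$r$ piece of $C$ presupposes vanishing of the weight-$(r-1)$ piece; as stated, your mechanism is circular at this point.

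The repair is available with the tools you already have, but it is a different observation from the one you made: since $C$ has pure form type $(0,2)$, it is annihilated by $(\delta^{1,0})^{-1}$ outright (there is no holomorphic form factor to contract), and its coefficient data take values in $\operatorname{Hom}(T^\vee, \operatorname{Sym}^{\geq 2} T^\vee)$ and $\operatorname{Sym}^{\geq 1} T^\vee \otimes \operatorname{End}(E)$, hence are $\pi_{0,*}$-trivial. Applying the homotopy identity \eqref{Equation 3.1} coefficientwise (this is legitimate: on the derivation part $[\delta^{1,0}, \cdot]$ acts as $\delta^{1,0}$ on generator data because derivations annihilate $dz^\mu$, and on the multiplication part because $\delta^{1,0}$ is a derivation), one gets
\begin{equation*}
	C^{(r)} = (\delta^{1,0})^{-1}\bigl(\delta^{1,0} C^{(r)}\bigr) = (\delta^{1,0})^{-1}\bigl[\nabla^{1,0}, C^{(r-1)}\bigr],
\end{equation*}
and induction on weight from $C^{(0)} = 0$ forces $C = 0$, which is \eqref{Equation 3.2}--\eqref{Equation 3.3}. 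With this substitution your proof closes; as written, the decisive step is asserted (``forced to zero by the same mechanism'') rather than proved.
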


Indeed, $\overline{\partial} + I + I^E$ can be extended to a connection $D_{\operatorname{Kap}}^E$ in (\ref{Equation 3.4}). In this paper, we call $D_{\operatorname{Kap}}^E$ the \emph{Kapranov's connection} on $\mathcal{W}_{\operatorname{cl}}^{1, 0} \otimes E$.

\begin{proposition}[a generalization of Proposition 2.8 in \cite{ChaLeuLi2022b}]
	\label{Proposition 3.4}
	The connection
	\begin{equation}
		\label{Equation 3.4}
		D_{\operatorname{Kap}}^E := \nabla + \nabla^E - \delta^{1, 0} + I + I^E
	\end{equation}
	on $\mathcal{W}_{\operatorname{cl}}^{1, 0} \otimes E$ is flat, i.e. $(D_{\operatorname{Kap}}^E)^2 = 0$.
\end{proposition}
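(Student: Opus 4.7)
The plan is to split $D_{\operatorname{Kap}}^E = D^{(1,0)} + D^{(0,1)}$ by form bidegree, where $D^{(1,0)} := (\nabla + \nabla^E)^{1,0} - \delta^{1,0}$ and $D^{(0,1)} := \overline{\partial} + I + I^E$ (using that the $(0,1)$-part of $\nabla + \nabla^E$ coincides with the Dolbeault operator on the holomorphic bundle $\mathcal{W}_{\operatorname{cl}}^{1, 0} \otimes E$), and then verify that each of the $(2,0)$, $(1,1)$, $(0,2)$-components of $(D_{\operatorname{Kap}}^E)^2$ vanishes separately.

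The $(0,2)$-component is $(D^{(0,1)})^2 = (\overline{\partial} + I + I^E)^2$, which vanishes by the already-quoted Kapranov identity (\ref{Equation 3.3}). For the $(2,0)$-component $(D^{(1,0)})^2$: the square $((\nabla + \nabla^E)^{1,0})^2$ vanishes because the curvature of the tensor-product connection is of pure type $(1,1)$ on a K\"ahler manifold; $(\delta^{1,0})^2 = 0$ is immediate from the definition; and the remaining anticommutator $\{(\nabla + \nabla^E)^{1,0}, \delta^{1,0}\}$ vanishes, as I would verify on generators: applied to $w^\mu$ the anticommutator evaluates to $\Gamma^\mu_{ij} dz^i \wedge dz^j$, which is zero by the K\"ahler symmetry $\Gamma^\mu_{ij} = \Gamma^\mu_{ji}$, while on a local $E$-frame section it is zero for free since $\delta^{1,0}$ is purely algebraic in the $\mathcal{W}$-factor.

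The $(1,1)$-component $\{D^{(1,0)}, D^{(0,1)}\}$ is the main calculation. The term $\{(\nabla + \nabla^E)^{1,0}, \overline{\partial}\}$ equals the K\"ahler $(1,1)$-curvature of the tensor-product connection, namely $\nabla^2 + R^E$ acting on $\mathcal{W}_{\operatorname{cl}}^{1, 0} \otimes E$ (where $\nabla^2$ is the Levi-Civita curvature derivation extended to $\widehat{\operatorname{Sym}} T^\vee$ and $R^E$ is Chern multiplication on $E$); the anticommutator $\{\delta^{1,0}, \overline{\partial}\}$ vanishes by a similar generator-level check. The remaining cross-terms $\{(\nabla + \nabla^E)^{1,0} - \delta^{1,0}, I + I^E\}$ must therefore cancel $\nabla^2 + R^E$. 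I plan to apply Lemma \ref{Lemma 3.2} twice --- with $s = (\delta^{1,0})^{-1} \nabla^2$ and with $s = (\delta^{1,0})^{-1} R^E$ --- noting that the hypothesis $\nabla^{1,0}(\delta^{1,0} s) = 0$ holds in both cases by Bianchi, since $\nabla^2$ and $R^E$ are of pure type $(1,1)$ on K\"ahler and so $\nabla^{1,0} \nabla^2 = \nabla^{1,0} R^E = 0$. The two conclusions yield the element-level identities $(\nabla^{1,0} - \delta^{1,0}) I = -\nabla^2$ and $(\nabla^{1,0} - \delta^{1,0}) I^E = -R^E$, which promote to the operator identities $\{(\nabla + \nabla^E)^{1,0} - \delta^{1,0}, I\} = -\nabla^2$ and $\{(\nabla + \nabla^E)^{1,0} - \delta^{1,0}, I^E\} = -R^E$ via the graded Leibniz rule. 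Summing gives $(\nabla^2 + R^E) + (-\nabla^2) + (-R^E) = 0$ as desired.

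The main obstacle is the sign bookkeeping when lifting Lemma \ref{Lemma 3.2}, which is phrased on elements of $\Omega^*(M, \mathcal{W}_{\operatorname{cl}}^{1, 0} \otimes E)$, to the operator-level anticommutators required for flatness --- in particular $I$ acts as a derivation of the $\mathcal{W}_{\operatorname{cl}}^{1, 0}$-factor while $I^E$ acts as multiplication by an $\operatorname{End}(E)$-valued element, so their anticommutators with $\nabla^{1,0}$ and $\delta^{1,0}$ obey slightly different Leibniz-style rules, and $\delta^{1,0}$ itself mixes wedge with contraction. Once the sign conventions are nailed down and the auxiliary checks $\{(\nabla^E)^{1,0}, I\} = 0$, $\{(\nabla^E)^{1,0}, \delta^{1,0}\} = 0$ (both immediate from the fact that $I$ and $\delta^{1,0}$ are algebraic in $E$) are folded in, the cancellation is exact and the proposition follows.
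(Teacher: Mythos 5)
Your proof is correct, but it takes a genuinely different route from the paper's. The paper disposes of Proposition \ref{Proposition 3.4} in a few lines by a conceptual argument: the splitting $\eta_E: JE \cong \mathcal{W}_{\operatorname{cl}}^{1,0} \otimes E$ induced by $\nabla$ and $\nabla^E$ transports the manifestly flat connection $\nabla_{\operatorname{G}}^E + \overline{\partial}$ on the holomorphic jet bundle to $D_{\operatorname{Kap}}^E$, so flatness is inherited via the cochain isomorphism (\ref{Equation 3.5}); the actual verification is thereby deferred to the construction of $\eta_E$ (and to Kapranov's original work), and the isomorphism (\ref{Equation 3.5}) is itself reused later. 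You instead verify $(D_{\operatorname{Kap}}^E)^2 = 0$ directly, bidegree by bidegree, which is self-contained modulo the quoted identity (\ref{Equation 3.3}) for the $(0,2)$-part. Your key $(1,1)$-cancellation identities, $(\nabla^{1,0} - \delta^{1,0}) I = -\nabla^2$ and $(\nabla^{1,0} - \delta^{1,0}) I^E = -R^E$ obtained from Lemma \ref{Lemma 3.2} plus the Bianchi identity, are precisely the content of the paper's Lemma \ref{Lemma 4.3} (stated there for $\widetilde{I}$ and $I^E$), and your overall scheme --- splitting into $(2,0)$, $(1,1)$, $(0,2)$ components with the $(2,0)$-part handled by $(\nabla^{1,0} - \delta^{1,0})^2 = 0$ --- is exactly how the paper proves the Fedosov-connection analogue, Proposition \ref{Proposition 4.1}. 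So your argument is sound and is validated elsewhere in the paper itself; what it costs you is the sign bookkeeping you rightly flag (promoting the element-level identities to graded-commutator identities for the derivation $I$ versus the multiplication operator $I^E$), and what it buys is an explicit computation that does not presuppose the jet-bundle identification.
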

\begin{proof}
	Let $JE$ be the holomorphic jet bundle of $E$. Recall that there is a natural flat holomorphic connection $\nabla_{\operatorname{G}}^E$ on $JE$, known as the \emph{Grothendieck connection}. The connections $\nabla$ and $\nabla^E$ induce a smooth splitting $\eta_E: JE \cong \mathcal{W}_{\operatorname{cl}}^{1, 0} \otimes E$. Indeed, $D_{\operatorname{Kap}}^E$ is constructed by this splitting so that we have a cochain isomorphism:
	\begin{equation}
		\label{Equation 3.5}
		(\Omega^*(M, JE), \nabla_{\operatorname{G}}^E + \overline{\partial}) \cong (\Omega^*( M, \mathcal{W}_{\operatorname{cl}}^{1, 0} \otimes E ), D_{\operatorname{Kap}}^E).
	\end{equation}
\end{proof}

\begin{remark}
	Under the isomorphism (\ref{Equation 3.5}), $\nabla_{\operatorname{G}}^E$ is identified with $\nabla^{1, 0} - \delta^{1, 0}$ while the holomorphic structure $\overline{\partial}$ on $JE$ is identified with the operator $\overline{\partial} + I + I^E$ on $\Omega^*(M, \mathcal{W}_{\operatorname{cl}}^{1, 0} \otimes E)$. Note that the term $\overline{\partial}$ in the operator $\overline{\partial} + I + I^E$ denotes the holomorphic structure on $\mathcal{W}_{\operatorname{cl}}^{1, 0} \otimes E$ instead.
\end{remark}

Now, we prove (a reformulation of) the statement that there exists a quasi-isomorphism
\begin{equation*}
	(\Omega^{0, *}(M, E), \overline{\partial}) \hookrightarrow (\Omega^*(M, JE), \nabla_{\operatorname{G}}^E + \overline{\partial}).
\end{equation*}

\begin{proposition}
	There exists a canonical quasi-isomorphism
	\begin{equation*}
		(\Omega^{0, *}(M, E), \overline{\partial}) \hookrightarrow (\Omega^*(M, \mathcal{W}_{\operatorname{cl}}^{1, 0} \otimes E), D_{\operatorname{Kap}}^E).
	\end{equation*}
\end{proposition}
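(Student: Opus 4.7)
The plan is to reduce, via the cochain isomorphism (\ref{Equation 3.5}) of Proposition \ref{Proposition 3.4}, to showing that the inclusion
\[
\iota: (\Omega^{0,*}(M, E), \overline{\partial}) \hookrightarrow (\Omega^*(M, JE), \nabla_{\operatorname{G}}^E + \overline{\partial})
\]
induced by the holomorphic jet prolongation $j^\infty$ is a quasi-isomorphism. For $s \in \Omega^{0, q}(M, E)$, $\iota(s)$ is obtained by applying $j^\infty$ coefficient-wise (in local holomorphic coordinates) to the $E$-valued part of $s$, producing a $(0, q)$-form with values in the holomorphic jet bundle $JE$. Under $\eta_E$, this corresponds to the formal Taylor expansion $\iota(s) = \sum_{r \geq 0} (\widetilde{\nabla}^{1, 0})^r \iota_0(s)$, where $\iota_0$ is the naive embedding of $s$ into $\mathcal{W}$-degree zero.

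I would first verify the chain map property. For the $(1, 0)$-part of $\nabla_{\operatorname{G}}^E + \overline{\partial}$ (which is $\nabla^{1, 0} - \delta^{1, 0}$ under $\eta_E$), the hypothesis $\delta^{1, 0} \iota_0(s) = 0$ allows Lemma \ref{Lemma 3.2} to conclude $(\nabla^{1, 0} - \delta^{1, 0}) \iota(s) = 0$; equivalently, jet-prolonged sections are flat for the Grothendieck connection. For the $(0, 1)$-part, the equation $\overline{\partial} \iota(s) = \iota(\overline{\partial} s)$ follows because formal Taylor expansion in the holomorphic variables commutes with anti-holomorphic differentials.

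I would then show $\iota$ is a quasi-isomorphism by a spectral sequence on the double complex $(\Omega^{*, *}(M, JE), \nabla_{\operatorname{G}}^E, \overline{\partial})$. Working at the sheaf level, filter by $(1, 0)$-degree. The $E_0$-page has differential $\overline{\partial}$; since $JE$ is a holomorphic bundle, the $\overline{\partial}$-Poincar\'e lemma gives $E_1^{p, 0} = \Omega_M^p \otimes_{\mathcal{O}_M} J\mathcal{E}$ and $E_1^{p, q} = 0$ for $q > 0$. The induced $d_1 = \nabla_{\operatorname{G}}^E$ is the flat holomorphic Grothendieck connection, whose local flat sections recover $\mathcal{E}$ via $j^\infty$; hence $E_2^{0, 0} = \mathcal{E}$ and all other terms vanish, showing that the total complex is locally quasi-isomorphic to $\mathcal{E}$ concentrated in degree $0$. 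Since the source $(\Omega^{0, *}(M, E), \overline{\partial})$ is the Dolbeault resolution of $\mathcal{E}$ and $\iota$ is compatible with the augmentation $\mathcal{E} \hookrightarrow \Omega^{0, 0}(M, E)$, $\iota$ is a quasi-isomorphism.

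The main obstacle is the $(0, 1)$-part of the chain map verification. Translated to the Kapranov side via $\eta_E$, it becomes $(\overline{\partial} + I + I^E) \iota(s) = \iota(\overline{\partial} s)$, which involves a non-trivial interaction between the formal Taylor expansion and the curvature terms $I, I^E$ encoding Kapranov's $L_\infty$-structure. This can be handled either by a direct local calculation using the definitions of $I, I^E$ together with the Maurer--Cartan equations (\ref{Equation 3.2}) and (\ref{Equation 3.3}), or more conceptually by unwinding the construction of $\eta_E$ so that it intertwines the Dolbeault operator of $JE$ with $\overline{\partial} + I + I^E$ by design.
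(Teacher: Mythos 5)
Your overall architecture (transfer to the jet-bundle side, check the chain-map property, then run a spectral sequence) could in principle be completed, but it has a genuine gap at exactly the point you flag as ``the main obstacle'': the $(0,1)$-part of the chain-map property, namely
\begin{equation*}
	(\overline{\partial} + I + I^E)\Big(\sum_{r \geq 0}(\widetilde{\nabla}^{1,0})^r s\Big) = \sum_{r \geq 0}(\widetilde{\nabla}^{1,0})^r(\overline{\partial} s), \qquad s \in \Omega^{0,q}(M,E),
\end{equation*}
is never proved; you only name two strategies that might prove it. This identity is where essentially all the content of the proposition lives --- the rest of your argument is formal --- so deferring it leaves the proof incomplete. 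Moreover, the ``more conceptual'' strategy is close to circular: the statement that $\eta_E$ intertwines the Dolbeault operator of $JE$ with $\overline{\partial}+I+I^E$ is itself Kapranov's theorem, and even granting it you would still need to know that $\sum_r(\widetilde{\nabla}^{1,0})^r s$ corresponds under $\eta_E$ to something whose commutation with $\overline{\partial}^{JE}$ is evident; for a merely smooth $(0,q)$-form $s$ there is no coordinate-free holomorphic jet prolongation to appeal to, so this, too, reduces to the very computation you are trying to avoid.

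The paper sidesteps the computation by a formal device you did not identify: Lemma \ref{Lemma 3.7} produces an explicit cochain isomorphism $\Phi^E$ from $(\Omega^*(M,\mathcal{W}_{\operatorname{cl}}^{1,0}\otimes E), D_{\operatorname{Kap}}^E)$ onto $(\Omega^*(M,\mathcal{W}_{\operatorname{cl}}^{1,0}\otimes E), \overline{\partial}-\delta^{1,0})$, using only $ (D_{\operatorname{Kap}}^E)^2=(\overline{\partial}-\delta^{1,0})^2=0$ and the homotopy identity of Lemma \ref{Lemma 3.1}. Since $\delta^{1,0}$ vanishes on $\Omega^{0,*}(M,E)$, that subspace is trivially a subcomplex of the target, and Lemma \ref{Lemma 3.1} already gives an explicit contracting homotopy showing its inclusion is a quasi-isomorphism; composing with $(\Phi^E)^{-1}$ yields the proposition with no curvature computation, no Poincar\'e lemma for the infinite-rank bundle $JE$, and no spectral sequence. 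If you insist on your route, the displayed identity does follow from Lemma \ref{Lemma 3.7} (because $(\Phi^E)^{-1}$ is a cochain map whose restriction to $\Omega^{0,*}(M,E)$ is your $\iota$, and the $(1,0)$-part of $D_{\operatorname{Kap}}^E\iota(s)$ vanishes by Lemma \ref{Lemma 3.2}), but then that lemma is doing all the work and the double-complex and fine-sheaf machinery in your third paragraph becomes unnecessary overhead.
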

\begin{proof}
	Let $\Phi^E$ be the cochain isomorphism in Lemma \ref{Lemma 3.7} (to be proved below). By Lemma \ref{Lemma 3.1}, the restriction of $(\Phi^E)^{-1}$ onto $\Omega^{0, *}(M, E)$ gives the desired quasi-isomorphism.
\end{proof}

\begin{lemma}
	\label{Lemma 3.7}
	The map
	\begin{equation*}
		\Phi^E: (\Omega^*(M, \mathcal{W}_{\operatorname{cl}}^{1, 0} \otimes E), D_{\operatorname{Kap}}^E) \to (\Omega^*(M, \mathcal{W}_{\operatorname{cl}}^{1, 0} \otimes E), \overline{\partial} - \delta^{1, 0})
	\end{equation*}
	given by $s \mapsto s - (\delta^{1, 0})^{-1} (D_{\operatorname{Kap}}^E - (\overline{\partial} - \delta^{1, 0})) s$ is a cochain isomorphism.
\end{lemma}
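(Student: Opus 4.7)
The plan is to verify the two ingredients separately: first that $\Phi^E$ is bijective, and then that it intertwines the two differentials. Write $B := D_{\operatorname{Kap}}^E - (\overline{\partial} - \delta^{1, 0}) = \nabla^{1, 0} + (\nabla^E)^{1, 0} + I + I^E$, so that $\Phi^E = \operatorname{Id} - (\delta^{1, 0})^{-1} B$. Both parts rely on the elementary observation that each summand of $B$ does not decrease the symmetric degree $l$ in $\widehat{\operatorname{Sym}} T^\vee$, while $(\delta^{1, 0})^{-1}$ raises $l$ by exactly one: the connection terms $\nabla^{1, 0}$ and $(\nabla^E)^{1, 0}$ preserve $l$, $I_{(r)}$ raises $l$ by $r - 1 \geq 1$, and $I_{(r)}^E$ raises $l$ by $r \geq 1$.

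For bijectivity I would invert $\Phi^E$ by the Neumann series $\sum_{k \geq 0} ((\delta^{1, 0})^{-1} B)^k$. Since $(\delta^{1, 0})^{-1} B$ raises $l$ by at least one, iterating it $k$ times shifts the support by at least $k$, so the series is well defined in the $l$-adic completion defining $\widehat{\operatorname{Sym}} T^\vee$ and produces a two-sided inverse of $\Phi^E$.

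For the cochain property, set $D := \overline{\partial} - \delta^{1, 0}$. A direct expansion gives
\begin{equation*}
\Phi^E D_{\operatorname{Kap}}^E - D \Phi^E = B - (\delta^{1, 0})^{-1} B D - (\delta^{1, 0})^{-1} B^2 + D (\delta^{1, 0})^{-1} B.
\end{equation*}
The flatness $(D_{\operatorname{Kap}}^E)^2 = 0$ from Proposition \ref{Proposition 3.4}, combined with $D^2 = 0$ (which follows from the elementary local anticommutation $\{\overline{\partial}, \delta^{1, 0}\} = 0$), yields $\{D, B\} + B^2 = 0$. Substituting $-(\delta^{1, 0})^{-1} B D = (\delta^{1, 0})^{-1} D B + (\delta^{1, 0})^{-1} B^2$ reduces the difference to $B + \{(\delta^{1, 0})^{-1}, D\} B$. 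The Hodge-type formula (\ref{Equation 3.1}) reads $\{(\delta^{1, 0})^{-1}, \delta^{1, 0}\} = \operatorname{Id} - \pi_{0, *}$, and a coordinate check of the same sort shows $\{(\delta^{1, 0})^{-1}, \overline{\partial}\} = 0$. Together these yield $\{(\delta^{1, 0})^{-1}, D\} = -\operatorname{Id} + \pi_{0, *}$, so the whole expression collapses to $\pi_{0, *} B$. This vanishes because each summand of $B$ strictly raises $l + p$, whereas $\pi_{0, *}$ projects onto the subspace where $l + p = 0$.

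I do not anticipate any serious obstacle. The main delicacy is the bookkeeping of the anticommutation relations among $\delta^{1, 0}$, $(\delta^{1, 0})^{-1}$, $\overline{\partial}$, and $\pi_{0, *}$, but these are all local calculations in the same spirit as (\ref{Equation 3.1}) already carried out in Subsection \ref{Subsection 3.1}. The genuine structural input is the flatness of $D_{\operatorname{Kap}}^E$, which is the content of Proposition \ref{Proposition 3.4}.
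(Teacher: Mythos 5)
Your proof is correct and follows essentially the same route as the paper's: the cochain property is reduced via flatness of $D_{\operatorname{Kap}}^E$ and the homotopy identity of Lemma \ref{Lemma 3.1} to the vanishing of $\pi_{0,*} B$, and invertibility comes from the fact that $(\delta^{1,0})^{-1} B$ strictly raises the symmetric degree, your Neumann series being the explicit form of the paper's iterative solution along the filtration by $\operatorname{Sym}^{\geq r} T^\vee$. The only cosmetic difference is that you package the key cancellation as $\{(\delta^{1,0})^{-1}, \overline{\partial} - \delta^{1,0}\} = -\operatorname{Id} + \pi_{0,*}$, which is exactly the content of Lemma \ref{Lemma 3.1}, rather than substituting $(\overline{\partial}-\delta^{1,0})^2 = 0$ twice as the paper does.
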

\begin{proof}
	Write $D = D_{\operatorname{Kap}}^E$ and $\underline{D} = D_{\operatorname{Kap}}^E - (\overline{\partial} - \delta^{1, 0}) = \nabla^{1, 0} + I + I^E$. Then $\Phi^E(s) = s - (\delta^{1, 0})^{-1} \underline{D} s$.
	\begin{equation*}
		\Phi^E D - (\overline{\partial} - \delta^{1, 0}) \Phi^E = \underline{D} - (\delta^{1, 0})^{-1} \underline{D} D + (\overline{\partial} - \delta^{1, 0}) (\delta^{1, 0})^{-1} \underline{D}.
	\end{equation*}
	Since $D^2 = (\overline{\partial} - \delta^{1, 0})^2 = 0$, $-(\delta^{1, 0})^{-1} \underline{D} D = (\delta^{1, 0})^{-1} (\overline{\partial} - \delta^{1, 0}) D = (\delta^{1, 0})^{-1} (\overline{\partial} - \delta^{1, 0}) \underline{D}$. Then
	\begin{equation*}
		\Phi^E D - (\overline{\partial} - \delta^{1, 0}) \Phi^E = (1 + (\delta^{1, 0})^{-1} (\overline{\partial} - \delta^{1, 0}) + (\overline{\partial} - \delta^{1, 0}) (\delta^{1, 0})^{-1}) \underline{D} = \pi_{0, *} \underline{D} = 0
	\end{equation*}
	by Lemma \ref{Lemma 3.1}. Thus, $\Phi^E$ is a cochain map. It remains to show that $\Phi^E$ is a linear isomorphism.\par
	Indeed, the space $\Omega^*(M, \mathcal{W}_{\operatorname{cl}}^{1, 0} \otimes E)$ has a decreasing filtration whose $r$th filtered piece is $\Omega^*(M, \operatorname{Sym}^{\geq r} T^\vee \otimes E)$. The image of $\Omega^*(M, \operatorname{Sym}^{\geq r} T^\vee \otimes E)$ under $(\delta^{1, 0})^{-1} \underline{D}$ is contained in $\Omega^*(M, \operatorname{Sym}^{\geq r+1} T^\vee \otimes E)$. We can show that, for any $s \in \Omega^*(M, \mathcal{W}_{\operatorname{cl}}^{1, 0} \otimes E)$, there is a unique solution $P_s \in \Omega^*(M, \mathcal{W}_{\operatorname{cl}}^{1, 0} \otimes E)$ to the equation
	\begin{equation*}
		P_s - (\delta^{1, 0})^{-1} \underline{D} P_s = s.
	\end{equation*}
	This is done by solving the graded pieces of $P_s$ iteratively. Thus, the proof is complete.
\end{proof}

From now on, for $s \in \mathcal{C}^\infty(M, E)$, let $P_s = (\Phi^E)^{-1}(s) \in \mathcal{C}^\infty(M, \mathcal{W}_{\operatorname{cl}}^{1, 0} \otimes E)$. We can show that
\begin{equation*}
	P_s = \sum_{r=0}^\infty (\widetilde{\nabla}^{1, 0})^r s \quad \text{and} \quad \pi_{0, *}(P_s) = s.
\end{equation*}
It means that, under the identification via the smooth splitting $\eta_E: JE \cong \mathcal{W}_{\operatorname{cl}}^{1, 0} \otimes E$, $P_s$ is the jet prolongation of $s$ in the $T$-direction. Also, $s$ is $\overline{\partial}$-closed if and only if $P_s$ is $D_{\operatorname{Kap}}^E$-closed.\par
Considering the trivial Hermitian line bundle $M \times \mathbb{C}$ over $M$, we simply denote $D_{\operatorname{Kap}}^{M \times \mathbb{C}}$ by $D_{\operatorname{Kap}}$ and call it the \emph{Kapranov's connection} on $M$. In this case, the holomorphic jet bundle of $M \times \mathbb{C}$ is just the holomorphic jet bundle $J$ of $M$ and we denote $\nabla_{\operatorname{G}}^{M \times \mathbb{C}}$ by $\nabla_{\operatorname{G}}$.\par
Finally, note that the natural fibrewise action of $J$ on $JE$ is compatible with the connections $\nabla_G + \overline{\partial}$ and $\nabla_G^E + \overline{\partial}$. Via the smooth splittings $\eta_{M \times \mathbb{C}}: J \cong \mathcal{W}_{\operatorname{cl}}^{1, 0}$ and $\eta_E: JE \cong \mathcal{W}_{\operatorname{cl}}^{1, 0} \otimes E$, we can easily show that for any $p \in \mathbb{N}$, $a \in \Omega^p(M, \mathcal{W}_{\operatorname{cl}}^{1, 0})$ and $s \in \Omega^*(M, \mathcal{W}_{\operatorname{cl}}^{1, 0} \otimes E)$,
\begin{equation}
	\label{Equation 3.6}
	D_{\operatorname{Kap}}^E(as) = (D_{\operatorname{Kap}} a) s + (-1)^p a (D_{\operatorname{Kap}}^Es).
\end{equation}

\subsection{The Kapranov's connection $D_{\operatorname{Kap}}^{E_1, E_2}$ on $\operatorname{Hom}(\mathcal{W}_{\operatorname{cl}}^{1, 0} \otimes E_1, \mathcal{W}_{\operatorname{cl}}^{1, 0} \otimes E_2)$}
\label{Subsection 3.3}
\quad\par
Suppose that $E_1, E_2$ are Hermitian holomorphic vector bundles over $M$. Then the Kapranov's connections on $\mathcal{W}_{\operatorname{cl}}^{1, 0} \otimes E_1$ and $\mathcal{W}_{\operatorname{cl}}^{1, 0} \otimes E_2$ naturally induce a flat connection $D_{\operatorname{Kap}}^{E_1, E_2}$ on
\begin{equation*}
	\operatorname{Hom}( \mathcal{W}_{\operatorname{cl}}^{1, 0} \otimes E_1, \mathcal{W}_{\operatorname{cl}}^{1, 0} \otimes E_2 )
\end{equation*}
such that the smooth splittings $\eta_{E_i}: JE_i \to \mathcal{W}_{\operatorname{cl}}^{1, 0} \otimes E_i$ for $i = 1, 2$ yield a cochain isomorphism:
\begin{equation*}
	(\Omega^*(M, \operatorname{Hom}(JE_1, JE_2)), \nabla_{\operatorname{G}}^{E_1, E_2} + \overline{\partial}) \to (\Omega^*(M, \operatorname{Hom}( \mathcal{W}_{\operatorname{cl}}^{1, 0} \otimes E_1, \mathcal{W}_{\operatorname{cl}}^{1, 0} \otimes E_2 )), D_{\operatorname{Kap}}^{E_1, E_2}),
\end{equation*}
where $\nabla_{\operatorname{G}}^{E_1, E_2} := (\nabla_{\operatorname{G}}^{E_1})^\vee + \nabla_{\operatorname{G}}^{E_2}$. Explicitly, for $\phi \in \Omega^p(M, \operatorname{Hom}( \mathcal{W}_{\operatorname{cl}}^{1, 0} \otimes E_1, \mathcal{W}_{\operatorname{cl}}^{1, 0} \otimes E_2 ))$,
\begin{equation}
	\label{Equation 3.7}
	D_{\operatorname{Kap}}^{E_1, E_2} \phi = (\nabla + \nabla^{\operatorname{Hom}(E_1, E_2)})(\phi) - \delta^{1, 0}(\phi) + (I + I^{E_2}) \circ \phi - (-1)^p \phi \circ (I + I^{E_1}),
\end{equation}
where $\circ$ is the fibrewise composition and $\delta^{1, 0}(\phi) = \delta^{1, 0} \circ \phi - (-1)^p \phi \circ \delta^{1, 0}$. We call $D_{\operatorname{Kap}}^{E_1, E_2}$ the \emph{Kapranov's connection} on $\operatorname{Hom}( \mathcal{W}_{\operatorname{cl}}^{1, 0} \otimes E_1, \mathcal{W}_{\operatorname{cl}}^{1, 0} \otimes E_2 )$.\par
On the other hand, define a $\mathbb{C}$-linear operator $\widetilde{D}$ on $\Omega^*(M, \operatorname{Hom}( \mathcal{W}_{\operatorname{cl}}^{1, 0} \otimes E_1, \mathcal{W}_{\operatorname{cl}}^{1, 0} \otimes E_2 ))$ as follows. For $\phi \in \Omega^p(M, \operatorname{Hom}( \mathcal{W}_{\operatorname{cl}}^{1, 0} \otimes E_1, \mathcal{W}_{\operatorname{cl}}^{1, 0} \otimes E_2 ))$,
\begin{equation*}
	\widetilde{D}\phi := \overline{\partial} \phi - (-1)^p \phi \circ (I + I^{E_1}) = \overline{\partial} \circ \phi - (-1)^p \phi \circ (\overline{\partial} + I + I^{E_1}).
\end{equation*}
We can easily show by Proposition \ref{Proposition 3.4} that $\widetilde{D}^2 = 0$ on $\Omega^*(M, \operatorname{Hom}( \mathcal{W}_{\operatorname{cl}}^{1, 0} \otimes E_1, \mathcal{W}_{\operatorname{cl}}^{1, 0} \otimes E_2 ))$. Also, the smooth splitting $\eta_{E_1}: JE_1 \cong \mathcal{W}_{\operatorname{cl}}^{1, 0} \otimes E_1$ yields a cochain isomorphism:
\begin{equation*}
	(\Omega^{0, *}(M, \operatorname{Hom}(JE_1, E_2)), \overline{\partial}) \to (\Omega^{0, *}(M, \operatorname{Hom}( \mathcal{W}_{\operatorname{cl}}^{1, 0} \otimes E_1, E_2 )), \widetilde{D}).
\end{equation*}
Now, we prove (a reformulation of) the statement that there exists a quasi-isomorphism
\begin{equation*}
	(\Omega^{0, *}(M, \operatorname{Hom}(JE_1, E_2)), \overline{\partial}) \hookrightarrow (\Omega^*(M, \operatorname{Hom}(JE_1, JE_2)), \nabla_{\operatorname{G}}^{E_1, E_2} + \overline{\partial}).
\end{equation*}

\begin{proposition}
	\label{Proposition 3.8}
	There exists a canonical quasi-isomorphism
	\begin{equation*}
		(\Omega^{0, *}(M, \operatorname{Hom}( \mathcal{W}_{\operatorname{cl}}^{1, 0} \otimes E_1, E_2 )), \widetilde{D}) \hookrightarrow (\Omega^*(M, \operatorname{Hom}( \mathcal{W}_{\operatorname{cl}}^{1, 0} \otimes E_1, \mathcal{W}_{\operatorname{cl}}^{1, 0} \otimes E_2 )), D_{\operatorname{Kap}}^{E_1, E_2}).
	\end{equation*}
\end{proposition}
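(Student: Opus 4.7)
The plan is to mimic the proofs of Lemma~\ref{Lemma 3.1} and Lemma~\ref{Lemma 3.7}, now with an auxiliary target-side $\delta^{1,0}$ operator. Concretely, I first produce a Neumann-type cochain isomorphism that intertwines $D_{\operatorname{Kap}}^{E_1,E_2}$ with an auxiliary differential $\widetilde{D} - \delta^{1,0}_t$, and then show that the inclusion of the ``target-$\mathcal{W}_{\operatorname{cl}}^{1,0}$-degree-zero'' subcomplex is a quasi-isomorphism via a target-side version of Lemma~\ref{Lemma 3.1}.

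To begin, introduce $\mathbb{C}$-linear operators $\delta^{1,0}_t$, $(\delta^{1,0}_t)^{-1}$, $\pi^t_{0,*}$ on $\Omega^*(M, \operatorname{Hom}(\mathcal{W}_{\operatorname{cl}}^{1,0}\otimes E_1, \mathcal{W}_{\operatorname{cl}}^{1,0}\otimes E_2))$ by postcomposing $\phi$ with the corresponding operator applied fibrewise to its target-side $\mathcal{W}_{\operatorname{cl}}^{1,0}$-factor; for example $\delta^{1,0}_t(\phi) := \delta^{1,0}\circ\phi$. Since these act only on the target, the Hodge identity
\[
\operatorname{Id} - \pi^t_{0,*} = \delta^{1,0}_t (\delta^{1,0}_t)^{-1} + (\delta^{1,0}_t)^{-1} \delta^{1,0}_t
\]
is immediate from (\ref{Equation 3.1}). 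Using (\ref{Equation 3.7}), decompose $D_{\operatorname{Kap}}^{E_1,E_2} = \widetilde{D} - \delta^{1,0}_t + \underline{D}'$, where $\underline{D}'\phi := \nabla^{1,0}(\phi) + (-1)^p \phi \circ \delta^{1,0} + (I+I^{E_2})\circ\phi$ collects precisely the pieces that preserve or strictly raise the target-side symmetric degree. A direct sign calculation, exploiting $\overline{\partial}\delta^{1,0} + \delta^{1,0}\overline{\partial} = 0$ from Lemma~\ref{Lemma 3.1}, then yields $\widetilde{D}\delta^{1,0}_t + \delta^{1,0}_t\widetilde{D} = 0$, hence $(\widetilde{D}-\delta^{1,0}_t)^2 = 0$.

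Next, set $\Phi^{E_1,E_2}(\phi) := \phi - (\delta^{1,0}_t)^{-1}\underline{D}'\phi$ in direct analogy with Lemma~\ref{Lemma 3.7}. The intertwining $\Phi^{E_1,E_2}D_{\operatorname{Kap}}^{E_1,E_2} = (\widetilde{D}-\delta^{1,0}_t)\Phi^{E_1,E_2}$ is obtained exactly as in Lemma~\ref{Lemma 3.7}, using $(D_{\operatorname{Kap}}^{E_1,E_2})^2 = 0$ (which is the hom-bundle analogue of Proposition~\ref{Proposition 3.4}), $(\widetilde{D}-\delta^{1,0}_t)^2 = 0$, and the Hodge identity above to collapse the commutator to $\pi^t_{0,*}\underline{D}' = 0$. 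Invertibility of $\Phi^{E_1,E_2}$ follows from the observation that $(\delta^{1,0}_t)^{-1}\underline{D}'$ strictly raises the target-side symmetric degree, so $\phi - (\delta^{1,0}_t)^{-1}\underline{D}'\phi = s$ has the unique, adically convergent solution $\sum_{r\geq 0}((\delta^{1,0}_t)^{-1}\underline{D}')^r s$.

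Finally, applying the target-side version of Lemma~\ref{Lemma 3.1} (whose proof uses only (\ref{Equation 3.1}) on the target $\mathcal{W}_{\operatorname{cl}}^{1,0}$-factor, independent of any finite-rank hypothesis on coefficient bundles) to $\operatorname{Hom}(\mathcal{W}_{\operatorname{cl}}^{1,0}\otimes E_1, E_2)$ yields the quasi-isomorphism
\[
(\Omega^{0,*}(M, \operatorname{Hom}(\mathcal{W}_{\operatorname{cl}}^{1,0}\otimes E_1, E_2)), \widetilde{D}) \hookrightarrow (\Omega^*(M, \operatorname{Hom}(\mathcal{W}_{\operatorname{cl}}^{1,0}\otimes E_1, \mathcal{W}_{\operatorname{cl}}^{1,0}\otimes E_2)), \widetilde{D}-\delta^{1,0}_t),
\]
noting that $\widetilde{D}-\delta^{1,0}_t$ restricts to $\widetilde{D}$ on the subcomplex because $\delta^{1,0}$ annihilates target-symmetric-degree-zero elements. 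Composing with $(\Phi^{E_1,E_2})^{-1}$ gives the desired canonical quasi-isomorphism. The main obstacle I expect is the bookkeeping of signs and compositional orders in the decomposition $D_{\operatorname{Kap}}^{E_1,E_2} = \widetilde{D} - \delta^{1,0}_t + \underline{D}'$ and in verifying $\widetilde{D}\delta^{1,0}_t + \delta^{1,0}_t\widetilde{D} = 0$, since every operator in (\ref{Equation 3.7}) enters as a graded commutator of source- and target-side pieces that must be separated cleanly; once this decomposition is firmly in place, the rest is a direct transcription of Lemmas~\ref{Lemma 3.1} and~\ref{Lemma 3.7}.
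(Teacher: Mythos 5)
Your proposal is correct and follows essentially the same route as the paper: your $\delta^{1,0}_t$ is the paper's $\delta^{1,0}\circ$, your $\underline{D}'$ and $\Phi^{E_1,E_2}$ coincide with the operators in Lemma \ref{Lemma 3.10}, and your ``target-side version of Lemma \ref{Lemma 3.1}'' is exactly the content of Lemma \ref{Lemma 3.9} (whose proof likewise splits the homotopy identity into a target-side piece handled by Lemma \ref{Lemma 3.1} and a source-side piece that cancels). The filtration argument for invertibility and the final composition of the two quasi-isomorphisms also match the paper's proof.
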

\begin{proof}
	Let $\Phi^{E_1, E_2}$ be the cochain isomorphism in Lemma \ref{Lemma 3.10}. By Lemma \ref{Lemma 3.9}, the restriction of $(\Phi^{E_1, E_2})^{-1}$ onto $\Omega^{0, *}(M, \operatorname{Hom}( \mathcal{W}_{\operatorname{cl}}^{1, 0} \otimes E_1, E_2 ))$ gives the desired quasi-isomorphism.
\end{proof}

Lemmas \ref{Lemma 3.9} and \ref{Lemma 3.10} will be proved below. We first note that, since $\overline{\partial} \circ \delta^{1, 0} + \delta^{1, 0} \circ \overline{\partial} = 0$ on $\Omega^*(M, \mathcal{W}_{\operatorname{cl}}^{1, 0} \otimes E_2)$, $(\widetilde{D} - \delta^{1, 0} \circ)^2 = 0$ on $\Omega^*(M, \operatorname{Hom}( \mathcal{W}_{\operatorname{cl}}^{1, 0} \otimes E_1, \mathcal{W}_{\operatorname{cl}}^{1, 0} \otimes E_2 ))$.

\begin{lemma}
	\label{Lemma 3.9}
	For any $\phi \in \Omega^*(M, \operatorname{Hom}( \mathcal{W}_{\operatorname{cl}}^{1, 0} \otimes E_1, \mathcal{W}_{\operatorname{cl}}^{1, 0} \otimes E_2 ))$, regarded as a map
	\begin{equation*}
		\Omega^*(M, \mathcal{W}_{\operatorname{cl}}^{1, 0} \otimes E_1) \to \Omega^*(M, \mathcal{W}_{\operatorname{cl}}^{1, 0} \otimes E_1),
	\end{equation*}
	the following equality holds:
	\begin{equation}
		\label{Equation 3.8}
		\phi - \pi_{0, *} \circ \phi = (\delta^{1, 0} \circ (\delta^{1, 0})^{-1} \circ \phi - \widetilde{D}((\delta^{1, 0})^{-1} \circ \phi)) + (\delta^{1, 0})^{-1} \circ (\delta^{1, 0} \circ \phi - \widetilde{D}\phi).
	\end{equation}
	In particular, the inclusion
	\begin{equation*}
		(\Omega^{0, *}(M, \operatorname{Hom}( \mathcal{W}_{\operatorname{cl}}^{1, 0} \otimes E_1, E_2 )), \widetilde{D}) \hookrightarrow (\Omega^*(M, \operatorname{Hom}( \mathcal{W}_{\operatorname{cl}}^{1, 0} \otimes E_1, \mathcal{W}_{\operatorname{cl}}^{1, 0} \otimes E_2 )), \widetilde{D} - \delta^{1, 0} \circ)
	\end{equation*}
	is a quasi-isomorphism.
\end{lemma}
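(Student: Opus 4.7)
My plan is to prove (\ref{Equation 3.8}) first and deduce the quasi-isomorphism from it; conceptually, (\ref{Equation 3.8}) is the hom-bundle lift of the standard homotopy identity (\ref{Equation 3.1}), and the only new input beyond (\ref{Equation 3.1}) is that $\widetilde{D}$ graded-anticommutes with left composition by $(\delta^{1,0})^{-1}$.

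Introduce the shorthand operators $L_\delta(\phi):=\delta^{1,0}\circ\phi$, $L_{\delta^{-1}}(\phi):=(\delta^{1,0})^{-1}\circ\phi$ and $L_\pi(\phi):=\pi_{0,*}\circ\phi$ on $\Omega^*(M,\operatorname{Hom}(\mathcal{W}_{\operatorname{cl}}^{1,0}\otimes E_1,\mathcal{W}_{\operatorname{cl}}^{1,0}\otimes E_2))$. Applying (\ref{Equation 3.1}) to the range $\Omega^*(M,\mathcal{W}_{\operatorname{cl}}^{1,0}\otimes E_2)$ and precomposing with $\phi$ gives $\phi - L_\pi\phi = L_\delta L_{\delta^{-1}}\phi + L_{\delta^{-1}}L_\delta\phi$, whereas a direct expansion of the right-hand side of (\ref{Equation 3.8}) reads $L_\delta L_{\delta^{-1}}\phi - \widetilde{D}L_{\delta^{-1}}\phi + L_{\delta^{-1}}L_\delta\phi - L_{\delta^{-1}}\widetilde{D}\phi$. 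Comparing, (\ref{Equation 3.8}) reduces to the operator identity
\begin{equation*}
\widetilde{D}\circ L_{\delta^{-1}} + L_{\delta^{-1}}\circ\widetilde{D} = 0.
\end{equation*}
This anticommutation I would verify directly from $\widetilde{D}\phi = \overline{\partial}\circ\phi - (-1)^p\phi\circ(\overline{\partial}+I+I^{E_1})$ for $\phi$ of form-degree $p$: since $L_{\delta^{-1}}$ lowers form-degree by one, the ``domain-side'' contribution $\phi\circ(\overline{\partial}+I+I^{E_1})$ appears with opposite signs in $\widetilde{D}L_{\delta^{-1}}\phi$ and in $L_{\delta^{-1}}\widetilde{D}\phi$ and cancels trivially because left composition by $(\delta^{1,0})^{-1}$ commutes with right composition by any map of the domain; the remaining ``range-side'' terms combine to $\bigl(\overline{\partial}\circ(\delta^{1,0})^{-1} + (\delta^{1,0})^{-1}\circ\overline{\partial}\bigr)\circ\phi$, which vanishes because $\overline{\partial}$ and $(\delta^{1,0})^{-1}$ are odd operators acting on independent variables (the $d\overline{z}$-side versus the $w$- and $dz$-side) on $\Omega^*(M,\mathcal{W}_{\operatorname{cl}}^{1,0}\otimes E_2)$, hence anticommute.

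For the quasi-isomorphism, set $d := \widetilde{D} - L_\delta$ for the differential on the target complex. Rearranging (\ref{Equation 3.8}) yields $\operatorname{Id} - L_\pi = -(d\,L_{\delta^{-1}} + L_{\delta^{-1}}\,d)$, so $-L_{\delta^{-1}}$ is a cochain homotopy from $\operatorname{Id}$ to $L_\pi$. Since $\delta^{1,0}\circ\pi_{0,*} = 0$ and $\pi_{0,*}$ kills all positive $dz$-degrees on the range, $L_\pi$ is a cochain projection onto the subcomplex $(\Omega^{0,*}(M,\operatorname{Hom}(\mathcal{W}_{\operatorname{cl}}^{1,0}\otimes E_1,E_2)),\widetilde{D})$ that restricts to the identity there, so the inclusion is a deformation retract and a fortiori a quasi-isomorphism. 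The main delicate point is the sign bookkeeping in the anticommutator check; once one observes that all domain-side contributions cancel for purely formal reasons, the only substantive input is the graded anticommutativity of $\overline{\partial}$ and $(\delta^{1,0})^{-1}$ on the range, which is essentially the same ingredient already used in Lemma \ref{Lemma 3.1}.
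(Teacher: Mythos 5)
Your proof is correct and takes essentially the same route as the paper: the paper splits the right-hand side of (\ref{Equation 3.8}) into a range-side part equal to $\phi - \pi_{0,*}\circ\phi$ via Lemma \ref{Lemma 3.1} and a trivially cancelling domain-side part, which is exactly your decomposition into the identity (\ref{Equation 3.1}) plus the anticommutator $\overline{\partial}\circ(\delta^{1,0})^{-1}+(\delta^{1,0})^{-1}\circ\overline{\partial}=0$ (itself precisely the difference between Lemma \ref{Lemma 3.1} and (\ref{Equation 3.1})). Your deduction of the quasi-isomorphism from (\ref{Equation 3.8}) read as a homotopy formula likewise matches the paper's argument.
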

\begin{proof}
	Let $p \in \mathbb{N}$ and $\phi \in \Omega^p(M, \operatorname{Hom}( \mathcal{W}_{\operatorname{cl}}^{1, 0} \otimes E_1, \mathcal{W}_{\operatorname{cl}}^{1, 0} \otimes E_2 ))$. Then the right hand side of (\ref{Equation 3.8}) is the sum of the following two terms:
	\begin{itemize}
		\item $(\delta^{1, 0} - \overline{\partial}) \circ (\delta^{1, 0})^{-1} \circ \phi + (\delta^{1, 0})^{-1} \circ (\delta^{1, 0} - \overline{\partial}) \circ \phi$, and
		\item $(-1)^{p - 1} ((\delta^{1, 0})^{-1} \circ \phi) \circ (\overline{\partial} - (I + I^{E_1})) + (-1)^p (\delta^{1, 0})^{-1} \circ  (\phi \circ (\overline{\partial} - (I + I^{E_1})))$.
	\end{itemize}
	The first term is equal to $\phi - \pi_{0, *} \circ \phi$ by Lemma \ref{Lemma 3.1} and the second term vanishes obviously.\par
	It is clear that the inclusion $\Omega^{0, *}(M, \operatorname{Hom}( \mathcal{W}_{\operatorname{cl}}^{1, 0} \otimes E_1, E_2 )) \hookrightarrow \Omega^*(M, \operatorname{Hom}( \mathcal{W}_{\operatorname{cl}}^{1, 0} \otimes E_1, \mathcal{W}_{\operatorname{cl}}^{1, 0} \otimes E_2 ))$ and $\pi_{0, *} \circ: \Omega^*(M, \operatorname{Hom}( \mathcal{W}_{\operatorname{cl}}^{1, 0} \otimes E_1, \mathcal{W}_{\operatorname{cl}}^{1, 0} \otimes E_2 )) \to \Omega^{0, *}(M, \operatorname{Hom}( \mathcal{W}_{\operatorname{cl}}^{1, 0} \otimes E_1, E_2 ))$ are cochain maps. If $\phi \in \Omega^{0, *}(M, \operatorname{Hom}( \mathcal{W}_{\operatorname{cl}}^{1, 0} \otimes E_1, E_2 ))$, then $\pi_{0, *} \circ \phi = \phi$. Then by (\ref{Equation 3.8}), the inclusion is a quasi-isomorphism.
\end{proof}

\begin{lemma}
	\label{Lemma 3.10}
	The map
	\begin{align*}
		\Phi^{E_1, E_2}: & (\Omega^*(M, \operatorname{Hom}( \mathcal{W}_{\operatorname{cl}}^{1, 0} \otimes E_1, \mathcal{W}_{\operatorname{cl}}^{1, 0} \otimes E_2 )), D_{\operatorname{Kap}}^{E_1, E_2})\\
		\to & (\Omega^*(M, \operatorname{Hom}( \mathcal{W}_{\operatorname{cl}}^{1, 0} \otimes E_1, \mathcal{W}_{\operatorname{cl}}^{1, 0} \otimes E_2 )), \widetilde{D} - \delta^{1, 0} \circ)
	\end{align*}
	given by $\phi \mapsto \phi - (\delta^{1, 0})^{-1} \circ ((D_{\operatorname{Kap}}^{E_1, E_2}\phi - (\widetilde{D}\phi - \delta^{1, 0} \circ \phi))$ is a cochain isomorphism.
\end{lemma}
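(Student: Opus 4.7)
The plan is to mirror the proof of Lemma \ref{Lemma 3.7} almost verbatim, with Lemma \ref{Lemma 3.9} playing the role that Lemma \ref{Lemma 3.1} played there. Write $D := D_{\operatorname{Kap}}^{E_1, E_2}$, $\widetilde{D}' := \widetilde{D} - \delta^{1, 0} \circ$, and $\underline{D} := D - \widetilde{D}'$, so that $\Phi^{E_1, E_2}(\phi) = \phi - (\delta^{1, 0})^{-1} \circ \underline{D} \phi$. First I would expand
\begin{equation*}
	\Phi^{E_1, E_2} D \phi - \widetilde{D}' \Phi^{E_1, E_2} \phi = \underline{D}\phi - (\delta^{1, 0})^{-1} \circ \underline{D} D \phi + \widetilde{D}' ((\delta^{1, 0})^{-1} \circ \underline{D} \phi),
\end{equation*}
use $D^2 = 0 = (\widetilde{D}')^2$ to rewrite $\underline{D} D \phi = -\widetilde{D}' \underline{D} \phi$, and then invoke Lemma \ref{Lemma 3.9} with $\psi := \underline{D}\phi$ (after noting that $\widetilde{D} = \widetilde{D}' + \delta^{1, 0} \circ$, so equation (\ref{Equation 3.8}) rewrites as $\psi + (\delta^{1, 0})^{-1} \circ \widetilde{D}' \psi + \widetilde{D}'((\delta^{1, 0})^{-1} \circ \psi) = \pi_{0, *} \circ \psi$) to collapse the right-hand side to $\pi_{0, *} \circ \underline{D} \phi$.

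Hence the cochain property reduces to showing $\pi_{0, *} \circ \underline{D} \phi = 0$. Combining (\ref{Equation 3.7}) with the definition of $\widetilde{D}$, a direct cancellation of the $\overline{\partial}$-, $\delta^{1, 0} \circ$- and $\phi \circ (I + I^{E_1})$-contributions gives
\begin{equation*}
	\underline{D} \phi = \nabla^{1, 0} \phi + (I + I^{E_2}) \circ \phi + (-1)^p \phi \circ \delta^{1, 0},
\end{equation*}
where $\nabla^{1, 0}$ denotes the $(1, 0)$-part of the connection induced by $\nabla$ and $\nabla^{\operatorname{Hom}(E_1, E_2)}$ on the hom bundle. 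Each of the three summands is annihilated by $\pi_{0, *} \circ$: the first and the third carry an extra external $(1, 0)$-form factor; the second has image in $\operatorname{Sym}^{\geq 1} T^\vee \otimes E_2$ on the target, since both $I$ and $I^{E_2}$ strictly raise the symmetric tensor degree.

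For the linear isomorphism, I would equip $\Omega^*(M, \operatorname{Hom}(\mathcal{W}_{\operatorname{cl}}^{1, 0} \otimes E_1, \mathcal{W}_{\operatorname{cl}}^{1, 0} \otimes E_2))$ with the decreasing filtration whose $r$-th piece consists of those $\phi$ whose target image lies in $\operatorname{Sym}^{\geq r} T^\vee \otimes E_2$. The same decomposition of $\underline{D}$ shows that $(\delta^{1, 0})^{-1} \circ \underline{D}$ strictly raises this filtration degree: the $\nabla^{1, 0}$-term trades one external $dz^\mu$ for a $w^\mu$ on the target under $(\delta^{1, 0})^{-1}$, while the $(I + I^{E_2}) \circ$-term and the $\phi \circ \delta^{1, 0}$-term already land in $\operatorname{Sym}^{\geq 1} T^\vee$ on the target before $(\delta^{1, 0})^{-1}$ is applied. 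Consequently, for any $\phi$ the formal geometric series $P_\phi := \sum_{n \geq 0} ((\delta^{1, 0})^{-1} \circ \underline{D})^n \phi$ converges graded-piece by graded-piece, is the unique solution of $P_\phi - (\delta^{1, 0})^{-1} \circ \underline{D} P_\phi = \phi$, and thus supplies a two-sided inverse of $\Phi^{E_1, E_2}$.

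The main obstacle I expect is the second step: unpacking $\underline{D} \phi$ while keeping the two-sided $\circ$-notation and all the $(-1)^p$ signs straight, and then arguing that $\pi_{0, *} \circ$ annihilates each of the three resulting summands for the correct reason. Once $\underline{D}$ has been put in the shape above, every remaining ingredient is a direct translation of the strategy used for Lemma \ref{Lemma 3.7}.
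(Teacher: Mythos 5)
Your proposal is correct and follows essentially the same route as the paper's own proof: the same auxiliary operators $D$ and $\underline{D}$, the same reduction of the cochain property to $\pi_{0, *} \circ \underline{D}\phi = 0$ via Lemma \ref{Lemma 3.9} and $D^2 = (\widetilde{D} - \delta^{1, 0}\circ)^2 = 0$, the same explicit formula $\underline{D}\phi = \nabla^{1, 0}\phi + (-1)^p \phi \circ \delta^{1, 0} + (I + I^{E_2}) \circ \phi$, and the same filtration-plus-iteration argument for invertibility. One small correction to your justification of the filtration claim: the term $\phi \circ \delta^{1, 0}$ does \emph{not} land in $\operatorname{Sym}^{\geq 1} T^\vee$ on the target before $(\delta^{1, 0})^{-1}$ is applied (its target image is the same as that of $\phi$); rather, exactly as with the $\nabla^{1, 0}$ term, it carries an extra external $(1, 0)$-form produced by $\delta^{1, 0}$, and it is $(\delta^{1, 0})^{-1}$ trading that $dz^\mu$ for a $w^\mu$ that raises the target filtration degree.
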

\begin{proof}
	The idea of proof is the same as that of Lemma \ref{Lemma 3.7}. For the ease of notations, we write $D = D_{\operatorname{Kap}}^{E_1, E_2}$ and $\underline{D} = D_{\operatorname{Kap}}^{E_1, E_2} - (\widetilde{D} - \delta^{1, 0} \circ)$. Then $\Phi^{E_1, E_2}(\phi) = \phi - (\delta^{1, 0})^{-1} \circ (\underline{D}\phi)$, and if $p \in \mathbb{N}$ and $\phi \in \Omega^p(M, \operatorname{Hom}( \mathcal{W}_{\operatorname{cl}}^{1, 0} \otimes E_1, \mathcal{W}_{\operatorname{cl}}^{1, 0} \otimes E_2 ))$, then
	\begin{equation*}
		\psi := \underline{D} \phi = \nabla^{1, 0} \phi + (-1)^p \phi \circ \delta^{1, 0} + (I + I^{E_2}) \circ \phi,
	\end{equation*}
	and thus $\pi_{0, *} \circ \psi = 0$. Together with the fact that $D^2 = (\widetilde{D} - \delta^{1, 0} \circ)^2 = 0$ and (\ref{Equation 3.8}), we have
	\begin{align*}
		& \Phi^{E_1, E_2} (D\phi) - \widetilde{D} (\Phi^{E_1, E_2}(\phi)) + \delta^{1, 0} \circ \Phi^{E_1, E_2}(\phi)\\
		= & \psi - (\delta^{1, 0})^{-1} \circ (\underline{D} D\phi) + \widetilde{D} ((\delta^{1, 0})^{-1} \circ \psi) - \delta^{1, 0} \circ (\delta^{1, 0})^{-1} \circ \psi\\
		= & \psi + (\delta^{1, 0})^{-1} \circ (\widetilde{D}\psi - \delta^{1, 0} \circ \psi) + \widetilde{D}((\delta^{1, 0})^{-1} \circ \psi) - \delta^{1, 0} \circ (\delta^{1, 0})^{-1} \circ \psi\\
		= & \pi_{0, *} \circ \psi = 0.
	\end{align*}
	Thus, $\Phi^{E_1, E_2}$ is a cochain map.\par
	To show that $\Phi^{E_1, E_2}$ is a linear isomorphism, equip $\Omega^*(M, \operatorname{Hom}( \mathcal{W}_{\operatorname{cl}}^{1, 0} \otimes E_1, \mathcal{W}_{\operatorname{cl}}^{1, 0} \otimes E_2 ))$ with the decreasing filtration whose $r$th filtered piece is $\Omega^*(M, \operatorname{Hom}( \mathcal{W}_{\operatorname{cl}}^{1, 0} \otimes E_1, \operatorname{Sym}^{\geq r} T^\vee \otimes E_2 ))$. If $\phi$ is in the $r$th filtered piece, then $(\delta^{1, 0})^{-1} \circ (\underline{D}\phi)$ is in the $(r+1)$th filtered piece. By the iterative method again, we can show that for any $\phi \in \Omega^*(M, \operatorname{Hom}( \mathcal{W}_{\operatorname{cl}}^{1, 0} \otimes E_1, \mathcal{W}_{\operatorname{cl}}^{1, 0} \otimes E_2 ))$, there is a unique solution $P_\phi \in \Omega^*(M, \operatorname{Hom}( \mathcal{W}_{\operatorname{cl}}^{1, 0} \otimes E_1, \mathcal{W}_{\operatorname{cl}}^{1, 0} \otimes E_2 ))$ to the equation
	\begin{equation*}
		P_\phi - (\delta^{1, 0})^{-1} \circ (\underline{D} P_\phi) = \phi.
	\end{equation*}
\end{proof}

From now on, for $\phi \in \mathcal{C}^\infty(M, \operatorname{Hom}(\mathcal{W}_{\operatorname{cl}}^{1, 0} \otimes E_1, E_2))$, let $P_\phi := (\Phi^{E_1, E_2})^{-1}(\phi)$. Note that $\phi$ induces an operator
\begin{equation}
	\label{Equation 3.9}
	\widehat{\phi}: \mathcal{C}^\infty(M, E_1) \to \mathcal{C}^\infty(M, E_2), \quad s \mapsto \phi(P_s).
\end{equation}
We can show that
\begin{equation}
	\label{Equation 3.10}
	P_\phi(P_s) = P_{\widehat{\phi}(s)}
\end{equation}
for any $s \in \mathcal{C}^\infty(M, E)$, using the fact that $(\nabla^{1, 0} - \delta^{1, 0}) P_s = 0$ by Lemma \ref{Lemma 3.2}. It means that, under the identification via the smooth splittings $\eta_{E_i}: JE_i \cong \mathcal{W}_{\operatorname{cl}}^{1, 0} \otimes E_i$ for $i = 1, 2$, $P_\phi$ is the jet prolongation of $\phi$ in the $T$-direction. Also, $\phi$ is $\widetilde{D}$-closed if and only if $P_\phi$ is $D_{\operatorname{Kap}}^{E_1, E_2}$-closed.

\subsection{The bundle of holomorphic differential operators}
\label{Subsection 3.4}
\quad\par
Recall that $JE_1 = \varprojlim J^rE_1$ is the inverse limit of the bundles $J^rE_1$ of $r$th order holomorphic jets of $E_1$ for $r \in \mathbb{N}$. It induces an increasing filtration on $\operatorname{Hom}(JE_1, E_2)$:
\begin{center}
	\begin{tikzcd}
		\operatorname{Hom}(E_1, E_2) \subset \operatorname{Hom}(J^1E_1, E_2) \subset \cdots \operatorname{Hom}(J^rE_1, E_2) \subset \cdots \subset \operatorname{Hom}(JE_1, E_2).
	\end{tikzcd}
\end{center}
The bundle $D(E_1, E_2)$ of holomorphic differential operators from $E_1$ to $E_2$ is the direct limit of the holomorphic vector subbundles $D^r(E_1, E_2) := \operatorname{Hom}(J^rE_1, E_2)$ for $r \in \mathbb{N}$. It can be equivalently defined as the holomorphic vector subbundle
\begin{equation*}
	D(E_1, E_2) := \bigcup_{r \in \mathbb{N}} D^r(E_1, E_2)
\end{equation*}
of $\operatorname{Hom}(JE_1, E_2)$. Via the smooth splitting $\eta_{E_1}: JE_1 \to \mathcal{W}_{\operatorname{cl}}^{1, 0} \otimes E_1$, $D^r(E_1, E_2)$ is identified with a subbundle $\operatorname{Hom}^{\leq r}(\mathcal{W}_{\operatorname{cl}}^{1, 0} \otimes E_1, E_2)$ of $\operatorname{Hom}(\mathcal{W}_{\operatorname{cl}}^{1, 0} \otimes E_1, E_2)$ whose smooth sections annihilate smooth sections of $\operatorname{Sym}^{\geq r+1} T^\vee \otimes E_1$, and $D(E_1, E_2)$ is identified with
\begin{equation*}
	\operatorname{Hom}^{< \infty}(\mathcal{W}_{\operatorname{cl}}^{1, 0} \otimes E_1, E_2) := \bigcup_{r \in \mathbb{N}} \operatorname{Hom}^{\leq r}(\mathcal{W}_{\operatorname{cl}}^{1, 0} \otimes E_1, E_2).
\end{equation*}
We denote by $\mathcal{D}(\mathcal{E}_1, \mathcal{E}_2)$ the sheaf of holomorphic sections of $D(E_1, E_2)$. Note that for every $\phi \in \mathcal{C}^\infty(M, \operatorname{Hom}^{< \infty}(\mathcal{W}_{\operatorname{cl}}^{1, 0} \otimes E_1, E_2))$, which can be identified as a smooth section of $D(E_1, E_2)$, the operator $\widehat{\phi}$ in (\ref{Equation 3.9}) is a smooth differential operator. The assignment $\phi \mapsto \widehat{\phi}$ induces a bijection between holomorphic sections of $D(E_1, E_2)$ and holomorphic differential operators from $E_1$ to $E_2$. Therefore, we can identify them as the same objects and call $\mathcal{D}(\mathcal{E}_1, \mathcal{E}_2)$ the \emph{sheaf of holomorphic differential operators from $E_1$ to $E_2$}.\par
On the other hand, the canonical fibrewise actions of $J$ on $JE_i$'s induce a $\mathcal{C}^\infty(M, \mathbb{C})$-linear map
\begin{equation*}
	\mathcal{C}^\infty(M, J) \times \mathcal{C}^\infty(M, \operatorname{Hom}(JE_1, JE_2)) \to \mathcal{C}^\infty(M, \operatorname{Hom}(JE_1, JE_2)), \quad (a, \phi) \mapsto \operatorname{ad}(a)(\phi),
\end{equation*}
where $\operatorname{ad}(a)(\phi)(s) = a\phi(s) - \phi(as)$ for any $s \in \mathcal{C}^\infty(M, JE_1)$. For each $r \in \mathbb{N}$, we can obtain a vector subbundle $\operatorname{Hom}^{\leq r}(JE_1, JE_2)$ of $\operatorname{Hom}(JE_1, JE_2)$ consisting of elements $\phi \in \operatorname{Hom}((JE_1)_x, (JE_2)_x))$, where $x \in M$, for which $(\operatorname{ad}(a_0) \circ \cdots \operatorname{ad}(a_r))(\phi) = 0$ for any $a_0, ..., a_r \in J_x$. The direct limit
\begin{equation*}
	\operatorname{Hom}^{<\infty}(JE_1, JE_2) := \bigcup_{r \in \mathbb{N}} \operatorname{Hom}^{\leq r}(JE_1, JE_2)
\end{equation*}
is also a vector subbundle of $\operatorname{Hom}(JE_1, JE_2)$.\par
Analogously, we can define $\operatorname{ad}(a)(\phi)(s) = a\phi(s) - \phi(as)$ for any sections $a \in \mathcal{C}^\infty(M, \mathcal{W}_{\operatorname{cl}}^{1, 0})$, $\phi \in \mathcal{C}^\infty(M, \operatorname{Hom}(\mathcal{W}_{\operatorname{cl}}^{1, 0} \otimes E_1, \mathcal{W}_{\operatorname{cl}}^{1, 0} \otimes E_2))$ and $s \in \mathcal{C}^\infty(M, \mathcal{W}_{\operatorname{cl}}^{1, 0} \otimes E_1)$, and obtain subbundles $\operatorname{Hom}^{\leq r}(\mathcal{W}_{\operatorname{cl}}^{1, 0} \otimes E_1, \mathcal{W}_{\operatorname{cl}}^{1, 0} \otimes E_2)$ and $\operatorname{Hom}^{<\infty}(\mathcal{W}_{\operatorname{cl}}^{1, 0} \otimes E_1, \mathcal{W}_{\operatorname{cl}}^{1, 0} \otimes E_2)$ of $\operatorname{Hom}(\mathcal{W}_{\operatorname{cl}}^{1, 0} \otimes E_1, \mathcal{W}_{\operatorname{cl}}^{1, 0} \otimes E_2)$. By (\ref{Equation 3.6}), we see that for any $a \in \mathcal{C}^\infty(M, \mathcal{W}_{\operatorname{cl}}^{1, 0})$ and $\phi \in \mathcal{C}^\infty(M, \operatorname{Hom}(\mathcal{W}_{\operatorname{cl}}^{1, 0} \otimes E_1, \mathcal{W}_{\operatorname{cl}}^{1, 0} \otimes E_2))$,
\begin{equation}
	D_{\operatorname{Kap}}^{E_1, E_2}(\operatorname{ad}(a)(\phi)) = \operatorname{ad}(D_{\operatorname{Kap}}(a))(\phi) + \operatorname{ad}(a)(D_{\operatorname{Kap}}^{E_1, E_2}\phi).
\end{equation}
We can then deduce that, for each $r \in \mathbb{N}$, $\Omega^*(M, \operatorname{Hom}^{\leq r}( \mathcal{W}_{\operatorname{cl}}^{1, 0} \otimes E_1, \mathcal{W}_{\operatorname{cl}}^{1, 0} \otimes E_2 ))$ forms a subcomplex of $(\Omega^*(M, \operatorname{Hom}^{\leq r}( \mathcal{W}_{\operatorname{cl}}^{1, 0} \otimes E_1, \mathcal{W}_{\operatorname{cl}}^{1, 0} \otimes E_2 )), D_{\operatorname{Kap}}^{E_1, E_2})$. Hence, $\operatorname{Hom}^{< \infty}( \mathcal{W}_{\operatorname{cl}}^{1, 0} \otimes E_1, \mathcal{W}_{\operatorname{cl}}^{1, 0} \otimes E_2 )$ is also a subcomplex.

\begin{proposition}
	\label{Proposition 3.11}
	For each $r \in \mathbb{N}$, the map
	\begin{equation*}
		\mathcal{C}^\infty(M, \operatorname{Hom}(\mathcal{W}_{\operatorname{cl}}^{1, 0} \otimes E_1, E_2)) \to \mathcal{C}^\infty(M, \operatorname{Hom}(\mathcal{W}_{\operatorname{cl}}^{1, 0} \otimes E_1, \mathcal{W}_{\operatorname{cl}}^{1, 0} \otimes E_2)), \quad \phi \mapsto P_\phi,
	\end{equation*}
	restricts to a bijection between $\widetilde{D}$-closed sections of $\operatorname{Hom}^{\leq r}(\mathcal{W}_{\operatorname{cl}}^{1, 0} \otimes E_1, E_2)$ and $D_{\operatorname{Kap}}^{E_1, E_2}$-flat sections of $\operatorname{Hom}^{\leq r}( \mathcal{W}_{\operatorname{cl}}^{1, 0} \otimes E_1, \mathcal{W}_{\operatorname{cl}}^{1, 0} \otimes E_2 )$.
\end{proposition}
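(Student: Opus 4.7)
The plan is to prove both directions of the bijection using the identity (\ref{Equation 3.10}), $P_\phi(P_s) = P_{\widehat{\phi}(s)}$, together with the identification (given in the text preceding the proposition) between sections of $\operatorname{Hom}^{\leq r}(\mathcal{W}_{\operatorname{cl}}^{1, 0}\otimes E_1, E_2)$ and smooth differential operators $\widehat{\phi}: \mathcal{C}^\infty(M, E_1)\to \mathcal{C}^\infty(M, E_2)$ of order $\leq r$. An auxiliary ingredient is the multiplicativity $\widehat{f}\cdot P_s = P_{fs}$ for any $f \in \mathcal{C}^\infty(M, \mathbb{C})$ and $s \in \mathcal{C}^\infty(M, E_1)$; I will establish this by observing that both sides satisfy $\pi_{0, *}\tilde{s} = fs$ and $(\nabla^{1, 0} - \delta^{1, 0})\tilde{s} = 0$ (via Lemma \ref{Lemma 3.2} and the Leibniz rules for $\nabla^{1, 0}$ and $\delta^{1, 0}$), and that such a $\tilde{s}$ is unique because $\delta^{1, 0}$ is fibrewise injective on $\operatorname{Sym}^{\geq 1}T^\vee$.

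For the backward direction, suppose $P_\phi \in \mathcal{C}^\infty(M, \operatorname{Hom}^{\leq r}(\mathcal{W}_{\operatorname{cl}}^{1, 0}\otimes E_1, \mathcal{W}_{\operatorname{cl}}^{1, 0}\otimes E_2))$. Expanding the iterated commutator applied to a local section $e_0$ of $E_1$,
\begin{equation*}
    (\operatorname{ad}(w^{\nu_1})\cdots \operatorname{ad}(w^{\nu_{r+1}})P_\phi)(e_0) = \sum_{S \subseteq \{1, \ldots, r+1\}}(-1)^{r+1 - |S|}\, w^{\nu_S}\, P_\phi(w^{\nu_{S^c}}e_0) = 0,
\end{equation*}
and applying $\pi_{0, *}$, every term with $|S|\geq 1$ vanishes because left-multiplication by $w^{\nu_S}$ pushes the target $w$-degree to $\geq 1$; only the $S = \emptyset$ term survives, yielding $\phi(w^{\nu_1}\cdots w^{\nu_{r+1}}e_0) = 0$ via $\pi_{0, *}\circ P_\phi = \phi$. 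Since such products span $\operatorname{Sym}^{r+1}T^\vee \otimes E_1$ locally and analogous computations handle higher symmetric powers, $\phi$ annihilates $\operatorname{Sym}^{\geq r+1}T^\vee \otimes E_1$.

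For the forward direction, suppose $\phi$ annihilates $\operatorname{Sym}^{\geq r+1}T^\vee\otimes E_1$, so that $\widehat{\phi}$ is a smooth differential operator of order $\leq r$. The classical identity $[f_1, [f_2, \ldots, [f_{r+1}, \widehat{\phi}]\ldots]] = 0$ for any smooth functions $f_i$, combined with (\ref{Equation 3.10}) and the multiplicativity of $P$, yields inductively
\begin{equation*}
    (\operatorname{ad}(\widehat{f}_1)\cdots \operatorname{ad}(\widehat{f}_{r+1})P_\phi)(P_s) = P_{[f_1, [f_2, \ldots, [f_{r+1}, \widehat{\phi}]\ldots]](s)} = 0
\end{equation*}
for every $s \in \mathcal{C}^\infty(M, E_1)$. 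Since $\{P_s(x)\}_s$ exhausts $(\mathcal{W}_{\operatorname{cl}}^{1, 0}\otimes E_1)_x$ by Borel's theorem, this forces $\operatorname{ad}(\widehat{f}_1)\cdots \operatorname{ad}(\widehat{f}_{r+1})P_\phi = 0$ as a section. The iterated $\operatorname{ad}$-operator is pointwise multilinear in $(a_1, \ldots, a_{r+1})$, and the fibrewise values $\widehat{f}|_x = P_f|_x$ cover all of $(\mathcal{W}_{\operatorname{cl}}^{1, 0})_x$ (Borel again); this extends the vanishing to arbitrary $a_i \in \mathcal{C}^\infty(M, \mathcal{W}_{\operatorname{cl}}^{1, 0})$, establishing $P_\phi \in \mathcal{C}^\infty(M, \operatorname{Hom}^{\leq r}(\mathcal{W}_{\operatorname{cl}}^{1, 0}\otimes E_1, \mathcal{W}_{\operatorname{cl}}^{1, 0}\otimes E_2))$.

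The main delicate point is the multiplicativity identity $\widehat{f}\cdot P_s = P_{fs}$, which bridges the fibrewise $\operatorname{ad}$-calculation and the commutator calculus for smooth differential operators; beyond this, the argument combines standard facts about the order of differential operators and Borel's theorem, so I do not anticipate serious technical obstacles.
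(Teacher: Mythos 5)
Your proposal is correct and follows essentially the same route as the paper: the heart of the argument is the identity $\operatorname{ad}(P_f)(P_\phi)(P_s) = P_{\operatorname{ad}(f)(\widehat{\phi})(s)}$, obtained from (\ref{Equation 3.10}) together with $P_{fs} = P_f P_s$, which is exactly the paper's key step, and you correctly supply the details the paper leaves implicit (the proof of multiplicativity via uniqueness of the $(\nabla^{1,0}-\delta^{1,0})$-flat prolongation, and the Borel-type exhaustion of the fibres needed to pass from sections of the form $P_f$, $P_s$ to arbitrary fibre elements). The only deviation is that you handle the implication from $P_\phi \in \operatorname{Hom}^{\leq r}$ to $\phi \in \operatorname{Hom}^{\leq r}$ by directly expanding the iterated commutator and applying $\pi_{0,*}$ rather than reading the order of $\widehat{\phi}$ off the same identity; this is a harmless, purely fibrewise alternative that works equally well.
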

\begin{proof}
	By Proposition \ref{Proposition 3.8}, the map $\phi \mapsto P_\phi$ restricts to a bijection between $\widetilde{D}$-closed sections of $\operatorname{Hom}(\mathcal{W}_{\operatorname{cl}}^{1, 0} \otimes E_1, E_2)$ and $D_{\operatorname{Kap}}^{E_1, E_2}$-flat sections of $\operatorname{Hom}( \mathcal{W}_{\operatorname{cl}}^{1, 0} \otimes E_1, \mathcal{W}_{\operatorname{cl}}^{1, 0} \otimes E_2 )$.\par
	Let $\phi \in \mathcal{C}^\infty(M, \operatorname{Hom}(\mathcal{W}_{\operatorname{cl}}^{1, 0} \otimes E_1, E_2))$ be $\widetilde{D}$-closed. For any $f \in \mathcal{C}^\infty(M, \mathbb{C})$,  and $s \in \mathcal{C}^\infty(M, E_1)$,
	\begin{equation*}
		\operatorname{ad}(P_f)(P_\phi)(P_s) = P_{\operatorname{ad}(f)(\widehat{\phi})(s)},
	\end{equation*}
	where $\operatorname{ad}(f)(\widehat{\phi})(s) := f\widehat{\phi}(s) - \widehat{\phi}(fs)$, due to (\ref{Equation 3.10}) and the fact that $P_{fs} = P_fP_s$. It implies that $\phi \in \mathcal{C}^\infty(M, \operatorname{Hom}^{\leq r}(\mathcal{W}_{\operatorname{cl}}^{1, 0} \otimes E_1, E_2))$ if and only if $P_\phi \in \mathcal{C}^\infty(M, \operatorname{Hom}^{\leq r}( \mathcal{W}_{\operatorname{cl}}^{1, 0} \otimes E_1, \mathcal{W}_{\operatorname{cl}}^{1, 0} \otimes E_2 ))$. We are done.
\end{proof}

\section{Quantization of the classical category of Hermitian holomorphic vector bundles}
\label{Section 4}
We will introduce the Weyl bundle on the K\"ahler manifold $(M, \omega)$ in Subsection \ref{Subsection 4.1}, Fedosov's connections on $(M, \omega)$ in Subsection \ref{Subsection 4.2}, and formal quantizability of morphisms in $\mathsf{DQ}$ in Subsection \ref{Subsection 4.3}. They serve as preparations for the proof of Theorem \ref{Theorem 1.1} in Subsection \ref{Subsection 4.4}. In Subsection \ref{Subsection 4.5}, we will define non-formal quantizable morphisms, which will be further studied in Section \ref{Section 5}.

\subsection{The Weyl bundle on a K\"ahler manifold}
\label{Subsection 4.1}
\quad\par
We first introduce some basic notations in K\"ahler geometry. In local complex coordinates $(z^1, ..., z^n)$, we write $\omega = \omega_{\alpha\overline{\beta}} dz^\alpha \wedge d\overline{z}^\beta$ and let $(\omega^{\overline{\alpha}\beta})$ be the inverse of $(\omega_{\alpha\overline{\beta}})$. The curvature of the Levi-Civita connection $\nabla$ is locally written as
\begin{equation*}
	\nabla^2 \left( \frac{\partial}{\partial z^\mu} \right) = R_{\alpha\overline{\beta}\mu}^\nu dz^\alpha \wedge d\overline{z}^\beta \otimes \frac{\partial}{\partial z^\nu} \quad \text{and} \quad \nabla^2 \left( \frac{\partial}{\partial \overline{z}^\mu} \right) = R_{\alpha\overline{\beta}\overline{\mu}}^{\overline{\nu}} dz^\alpha \wedge d\overline{z}^\beta \otimes \frac{\partial}{\partial \overline{z}^\nu}.
\end{equation*}
The \emph{Weyl bundle} of $(M, \omega)$ is the infinite rank vector bundle $\mathcal{W} = \widehat{\operatorname{Sym}} T_\mathbb{C}^\vee[[\hbar]]$, where $\widehat{\operatorname{Sym}} T_\mathbb{C}^\vee$ is the completed symmetric algebra bundle of the complexified cotangent bundle $T_\mathbb{C}^\vee$. A smooth section of $\mathcal{W}$ is locally given by a formal power series
\begin{equation*}
	\sum_{r, l \geq 0} \sum_{i_1, ..., i_l \geq 0} \hbar^r a_{r, i_1, ..., i_l} y^{i_1} \cdots y^{i_l},
\end{equation*}
where $a_{r, i_1, ..., i_l}$ are local smooth complex valued functions, $(x^1, ..., x^{2n})$ are local real coordinates, $y^i$ denotes the covector $dx^i$ regarded as a section of $\mathcal{W}$ and we suppress the notations of symmetric products in the above expression. There are three $\mathcal{C}^\infty(M, \mathbb{C})[[\hbar]]$-linear operators $\delta, \delta^{-1}, \pi_0$ on $\Omega^*(M, \mathcal{W})$ defined as follows: for a local section $a = y^{i_1} \cdots y^{i_l} dx^{j_1} \wedge \cdots \wedge dx^{j_m}$,
\begin{align*}
	\delta a = dx^k \wedge \frac{\partial a}{\partial y^k}, \quad \delta^{-1} a =
	\begin{cases}
		\frac{1}{l+m} y^k \iota_{\partial_{x^k}} a & \text{ if } l + m > 0;\\
		0 & \text{ if } l + m = 0,
	\end{cases}
	\quad \pi_0 (a) =
	\begin{cases}
		0  & \text{ if } l + m > 0;\\
		a & \text{ if } l + m = 0,
	\end{cases}
\end{align*}
The equality
\begin{equation}
	\label{Equation 4.1}
	\operatorname{Id} - \pi_0 = \delta \circ \delta^{-1} + \delta^{-1} \circ \delta
\end{equation}
holds on $\Omega^*(M, \mathcal{W})$.\par
The complex structure on $M$ gives rise to a subbundle $\mathcal{W}^{1, 0} = \widehat{\operatorname{Sym}} T^\vee [[\hbar]]$ of $\mathcal{W}$ and three $\mathcal{C}^\infty(M, \mathbb{C})[[\hbar]]$-linear operators $\delta^{1, 0}, (\delta^{1, 0})^{-1}, \pi_{0, *}$ on $\Omega^*(M, \mathcal{W})$ defined as follows: for a local section $a = w^{\mu_1} \cdots w^{\mu_l} \overline{w}^{\nu_1} \cdots \overline{w}^{\nu_m} dz^{\alpha_1} \wedge \cdots \wedge dz^{\alpha_p} \wedge d\overline{z}^{\beta_1} \wedge \cdots \wedge d\overline{z}^{\beta_q}$,
\begin{align*}
	\delta^{1, 0} a = dz^\mu \wedge \frac{\partial a}{\partial w^\mu},\quad
	(\delta^{1, 0})^{-1} a =
	\begin{cases}
		\frac{1}{l+p} w^\mu \iota_{\partial_{z^\mu}} a & \text{ if } l + p > 0;\\
		0 & \text{ if } l + p = 0,
	\end{cases}
	\quad \pi_{0, *}(a) =
	\begin{cases}
		0 & \text{ if } l + p > 0;\\
		a & \text{ if } l + p = 0,
	\end{cases}
\end{align*}
Here, we denote by $w^\mu$ (resp. $\overline{w}^\mu$) the covector $dz^\mu$ (resp. $d\overline{z}^\mu$) regarded as a section of $\mathcal{W}$. The equality $\operatorname{Id} - \pi_{0, *} = \delta^{1, 0} \circ (\delta^{1, 0})^{-1} + (\delta^{1, 0})^{-1} \circ \delta^{1, 0}$ holds on $\Omega^*(M, \mathcal{W})$. We can define the antiholomorphic counterparts $\mathcal{W}^{0, 1}$, $\delta^{0, 1}$, $(\delta^{0, 1})^{-1}$, $\pi_{*, 0}$ of $\mathcal{W}^{1, 0}$, $\delta^{1, 0}$, $(\delta^{1, 0})^{-1}$, $\pi_{0, *}$ respectively.\par
We equip $\mathcal{W}$ with the \emph{fibrewise anti-Wick product} $\star$ defined as follows (c.f., for instance, \cite{BorWal1997, Neu2003}). For $a, b \in \mathcal{C}^\infty(M, \mathcal{W})$,
\begin{equation}
	\label{Equation 4.2}
	a \star b := \sum_{r=0}^\infty \frac{\hbar^r}{r!} \omega^{\overline{\nu}_1\mu_1} \cdots \omega^{\overline{\nu}_r\mu_r} \frac{\partial^r a}{\partial \overline{w}^{\nu_1} \cdots \partial \overline{w}^{\nu_r}} \frac{\partial^r b}{\partial w^{\mu_1} \cdots \partial w^{\mu_r}}.
\end{equation}
We can naturally extend $\star$ to a $\mathcal{C}^\infty(M, \mathbb{C})[[\hbar]]$-bilinear map $\Omega^*(M, \mathcal{W}) \times \Omega^*(M, \mathcal{W}) \to \Omega^*(M, \mathcal{W})$ such that for all $\alpha, \beta \in \Omega^*(M, \mathcal{W})$ and $a, b \in \mathcal{C}^\infty(M, \mathcal{W})$, $(\alpha \otimes a) \star (\beta \otimes b) = (\alpha \wedge \beta) \otimes (a \star b)$. For other operations on vector bundles (of possibly infinite rank) appeared later in this paper, we also extend them onto vector bundle valued forms in a similar way.\par
By abuse of notations, we denote by $\nabla$ the connection on $\mathcal{W}$ induced by the Levi-Civita connection. Then $(\Omega^*(M, \mathcal{W}), R, \nabla, \tfrac{1}{\hbar}[\quad, \quad]_\star)$ forms a curved dgla, where $[\quad, \quad]_\star$ is the graded commutator of $\star$ and $R \in \Omega^{1, 1}(M, T^\vee \otimes \overline{T}^\vee)$ is given by $R = -\omega_{\eta\overline{\nu}} R_{\alpha\overline{\beta}\mu}^\eta dz^\alpha \wedge d\overline{z}^\beta \otimes w^\mu \overline{w}^\nu$.

\subsection{Fedosov's connections via Kapranov's $L_\infty$-structures}
\label{Subsection 4.2}
\quad\par
It was pointed out by Chan-Leung-Li \cite{ChaLeuLi2022b} that Kapranov's $L_\infty$-algebra structure (which is encoded by $I$ as in Subsection \ref{Subsection 3.2}) can be extended to a flat connection involved in Fedosov's quantization of the K\"ahler manifold $(M, \omega)$. It turns out that Kapranov's $L_\infty$-module structures (which are encoded by $I^E$'s as in Subsection \ref{Subsection 3.2}) also have similar extensions, which are hidden in Neumaier-Waldmann's construction ((27) in \cite{NeuWal2003}).\par
To see this, we start with defining the $\mathcal{W}$-valued $(0, 1)$-form $\widetilde{I} = \sum_{r=2}^\infty \widetilde{I}_{(r)}$, where for $r \geq 2$,
\begin{equation*}
	\widetilde{I}_{(r)} := (\widetilde{\nabla}^{1, 0})^{r-2} (\delta^{1, 0})^{-1}(R) \in \Omega^{0, 1}(M, \operatorname{Sym}^r T^\vee \otimes \overline{T}^\vee).
\end{equation*}
By (a variant \footnote{Indeed, we have modified Chan-Leung-Li’s original argument by taking the anti-Wick ordering instead of Wick ordering. In later (sub)sections, their definition of (formal) quantizable functions, their construction of fibrewise Bargmann-Fock actions, etc., are similarly modified.} of) Theorem 2.17 in \cite{ChaLeuLi2022b},
\begin{equation}
	\gamma := (\delta^{0, 1})^{-1} \omega + (\delta^{1, 0})^{-1} \omega + \widetilde{I}
\end{equation}
is a solution in $\Omega^1(M, \mathcal{W})$ to the equation
\begin{equation}
	R + \nabla \gamma + \tfrac{1}{2\hbar} [\gamma, \gamma]_\star = -\omega.
\end{equation}

Consider three Hermitian holomorphic vector bundles $E_1, E_2, E_3$ on $M$. The usual composition of smooth sections of hom-bundles, the wedge product of forms on $M$ and the fibrewise anti-Wick product $\star$ naturally define a $\mathbb{C}[[\hbar]]$-bilinear map
\begin{equation}
	\label{Equation 4.5}
	\star: \Omega^*(M, \mathcal{W} \otimes \operatorname{Hom}(E_2, E_3)) \times \Omega^*(M, \mathcal{W} \otimes \operatorname{Hom}(E_1, E_2)) \to \Omega^*(M, \mathcal{W} \otimes \operatorname{Hom}(E_1, E_3)).
\end{equation}
We also define a $\mathbb{C}[[\hbar]]$-bilinear map
\begin{equation}
	[\quad, \quad]_\star: \Omega^*(M, \mathcal{W}) \times \Omega^*(M, \mathcal{W} \otimes \operatorname{Hom}(E_1, E_2)) \to \Omega^*(M, \mathcal{W} \otimes \operatorname{Hom}(E_1, E_2))
\end{equation}
as follows. If $\alpha \in \Omega^p(M, \mathcal{W})$ and $\phi \in \Omega^q(M, \mathcal{W} \otimes \operatorname{Hom}(E_1, E_2))$, then
\begin{equation*}
	[\alpha, \phi]_\star := (\alpha \otimes \operatorname{Id}_{E_2}) \star \phi - (-1)^{pq} \phi \star (\alpha \otimes \operatorname{Id}_{E_1}).
\end{equation*}
We will now show that there are flat connections on $\mathcal{W} \otimes \operatorname{Hom}(E_i, E_j)$'s compatible with $\star$. The following proposition is a generalization of Theorem 2 in \cite{NeuWal2003} in the case of anti-Wick ordering.

\begin{proposition}
	\label{Proposition 4.1}
	Suppose $E_1, E_2$ are Hermitian holomorphic vector bundles. Then the following connection on $\mathcal{W} \otimes \operatorname{Hom}(E_1, E_2)$ is flat:
	\begin{equation}
		D^{E_1, E_2} := \nabla + \nabla^{\operatorname{Hom}(E_1, E_2)} + \tfrac{1}{\hbar} [\gamma, \quad]_\star + I^{E_2} \star - \star I^{E_1}.
	\end{equation}
\end{proposition}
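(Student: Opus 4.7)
The plan is to verify $(D^{E_1, E_2})^2 = 0$ by direct computation, combining the Fedosov equation (4.4) satisfied by $\gamma$ with Kapranov's master equations $(\overline{\partial} + I + I^{E_i})^2 = 0$ for $i = 1, 2$ stated above.

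First I would write $D^{E_1, E_2} = \nabla_0 + A$, where $\nabla_0 = \nabla + \nabla^{\operatorname{Hom}(E_1, E_2)}$ and $A\phi = \tfrac{1}{\hbar}[\gamma, \phi]_\star + I^{E_2} \star \phi - (-1)^{|\phi|}\phi \star I^{E_1}$. Expanding gives $(D^{E_1, E_2})^2 = \nabla_0^2 + \{\nabla_0, A\} + A^2$. The classical curvature $\nabla_0^2$ decomposes into the Levi-Civita curvature on $\mathcal{W}$, which acts as $\tfrac{1}{\hbar}[R, \cdot]_\star$ by the curved dgla structure of $(\Omega^*(M, \mathcal{W}), R, \nabla, \tfrac{1}{\hbar}[\cdot, \cdot]_\star)$, plus the Chern curvature piece $R^{E_2} \circ \cdot - \cdot \circ R^{E_1}$.

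Next, I would collect all the terms that involve only $\gamma$ and the Levi-Civita data: by the graded Jacobi identity these assemble into $\tfrac{1}{\hbar}[R + \nabla\gamma + \tfrac{1}{2\hbar}[\gamma, \gamma]_\star, \cdot]_\star$, which by (4.4) reduces to $\tfrac{1}{\hbar}[-\omega, \cdot]_\star$. Since $\omega$ has no $w$ or $\overline{w}$ content, it is $\star$-central and this contribution vanishes. The remaining terms involve $R^{E_i}$, covariant derivatives of $I^{E_i}$, the self-products $I^{E_i} \star I^{E_i}$, and the cross terms $\tfrac{1}{\hbar}[\gamma, I^{E_i} \star \cdot]_\star$; after separating them into actions on $\phi$ from the $E_2$-side and the $E_1$-side, I expect them to match exactly the squared Kapranov operators $(\overline{\partial} + I + I^{E_i})^2$ applied from the appropriate side, once the anti-Wick structure of $\star$ (which contracts $\partial_{\overline{w}}$ on the left with $\partial_w$ on the right) is used to convert the relevant $\overline{w}$-contractions into the actions of $\overline{\partial}$ and $I$ on $\mathcal{W}^{1, 0}_{\operatorname{cl}}$. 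Both vanish by the stated Kapranov theorem, completing the proof.

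The main obstacle will be the precise bookkeeping of the $\gamma$-$I^{E_i}$ cross terms: one needs to verify that $\tfrac{1}{\hbar}[\gamma, I^{E_i}]_\star$ reproduces the $I \cdot I^{E_i}$ and $\{\overline{\partial}, I^{E_i}\}$ pieces of Kapranov's expansion, exploiting the facts that $\gamma$ contains $(\delta^{0,1})^{-1}\omega$ (which plays the role of $\overline{\partial}$ after $\partial_w$-contraction) and $\widetilde{I}$ (constructed from iterated covariant derivatives of $R$, analogously to $I$). This verification is essentially the anti-Wick analogue of Neumaier-Waldmann's Theorem 2 in \cite{NeuWal2003}, as the authors already observe. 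A perhaps cleaner organizational alternative is first to prove the one-sided statement $(D^E)^2 = 0$ on $\mathcal{W} \otimes E$ for $D^E := \nabla + \nabla^E + \tfrac{1}{\hbar}[\gamma, \cdot]_\star + I^E \star$, and then to deduce the proposition by inducing connections on the Hom-bundle via the canonical identification $\mathcal{W} \otimes \operatorname{Hom}(E_1, E_2) \cong \operatorname{Hom}_\mathcal{W}(\mathcal{W} \otimes E_1, \mathcal{W} \otimes E_2)$ of free $\star$-modules.
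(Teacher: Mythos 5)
Your high-level organization is viable, and in one respect it is cleaner than the paper's: collecting every term built only from $\gamma$ and the Levi-Civita data into $\tfrac{1}{\hbar}[R + \nabla\gamma + \tfrac{1}{2\hbar}[\gamma, \gamma]_\star, \quad]_\star$ and then invoking the Fedosov equation (4.4) together with the $\star$-centrality of $\omega$ is a legitimate shortcut. The paper does not argue this way; it splits the curvature into its $(2,0)$-, $(1,1)$- and $(0,2)$-components and kills the pure-$\gamma$ contributions piecewise, using $(\nabla^{1,0} - \delta^{1,0})\widetilde{I} = -R$, $\delta^{0,1}\widetilde{I} = 0$ and $\tfrac{1}{\hbar}[\overline{\partial}\widetilde{I} + \tfrac{1}{\hbar}[\widetilde{I}, \widetilde{I}]_\star, \quad]_\star = 0$ imported from \cite{ChaLeuLi2022b}. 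Your one-sided reformulation on $\mathcal{W} \otimes E$ at the end is also a reasonable reorganization in the spirit of Neumaier-Waldmann.

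There is, however, a genuine gap in how you dispose of the bundle terms. After your reduction, the $E_i$-side contribution to the curvature is left (resp.\ right) $\star$-multiplication by the $2$-form
\begin{equation*}
	R^{E_i} + (\nabla + \nabla^{\operatorname{End}(E_i)}) I^{E_i} + \tfrac{1}{\hbar}[\gamma, I^{E_i}]_\star + \tfrac{1}{2}[I^{E_i}, I^{E_i}]_\star,
\end{equation*}
whose $(1,1)$-component is $R^{E_i} + (\nabla^{1,0} - \delta^{1,0}) I^{E_i}$, the term $-\delta^{1,0} I^{E_i}$ arising from the cross term of $I^{E_i}$ with the $(\delta^{0,1})^{-1}\omega$ piece of $\gamma$. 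The master equation $(\overline{\partial} + I + I^{E_i})^2 = 0$ that you cite is a purely $(0,2)$-type identity: it does dispose of the $(0,2)$-component $\overline{\partial} I^{E_i} + [I, I^{E_i}] + \tfrac{1}{2}[I^{E_i}, I^{E_i}]$ after the $\overline{w}$-contraction bookkeeping you describe, but it says nothing about the $(1,1)$-component. You need the separate identity $(\nabla^{1,0} - \delta^{1,0}) I^{E_i} = -R^{E_i}$, which the paper isolates as Lemma \ref{Lemma 4.3} and derives from Lemma \ref{Lemma 3.2} together with $\delta^{1,0} R^{E_i} = 0$ and the Bianchi identity $\nabla^{1,0} R^{E_i} = 0$; alternatively one can extract it from the $(1,1)$-part of the full flatness $(D_{\operatorname{Kap}}^{E_i})^2 = 0$ of Proposition \ref{Proposition 3.4}, but not from (\ref{Equation 3.3}) alone. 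Once this identity is supplied, your argument closes.
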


\begin{remark}
	Indeed, as $\delta^{1, 0} = -\tfrac{1}{\hbar}[(\delta^{0, 1})^{-1} \omega, \quad]_\star$ and $\delta^{0, 1} = -\tfrac{1}{\hbar}[(\delta^{1, 0})^{-1} \omega, \quad]_\star$, we have an alternative expression:
	\begin{equation*}
		D^{E_1, E_2} = \nabla + \nabla^{\operatorname{Hom}(E_1, E_2)} - \delta + \tfrac{1}{\hbar}[\widetilde{I}, \quad]_\star + I^{E_2} \star - \star I^{E_1}.
	\end{equation*}
\end{remark}

Before proving Proposition \ref{Proposition 4.1}, we need a lemma.

\begin{lemma}
	\label{Lemma 4.3}
	$(\nabla^{1, 0} - \delta^{1, 0}) (\widetilde{I}) = -R$ and if $E$ is a Hermitian holomorphic vector bundle over $M$, then $(\nabla^{1, 0} - \delta^{1, 0}) (I^E) = -R^E$.
\end{lemma}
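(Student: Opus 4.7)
The plan is to deduce both identities from Lemma \ref{Lemma 3.2} applied, in the first case, to $s_0 := (\delta^{1, 0})^{-1}(R)$ and, in the second case, to $s_0^E := (\delta^{1, 0})^{-1}(R^E)$. A trivial re-indexing of the defining sums gives $\widetilde{I} = \sum_{r \geq 0} (\widetilde{\nabla}^{1, 0})^r s_0$ and $I^E = \sum_{r \geq 0} (\widetilde{\nabla}^{1, 0})^r s_0^E$, identifying these with $\widetilde{s_0}$ and $\widetilde{s_0^E}$ in the sense of Lemma \ref{Lemma 3.2}. Once its hypothesis is checked, the lemma delivers $(\nabla^{1, 0} - \delta^{1, 0}) \widetilde{I} = -\delta^{1, 0} s_0$ and $(\nabla^{1, 0} - \delta^{1, 0})(I^E) = -\delta^{1, 0} s_0^E$, so both claims reduce to the identifications $\delta^{1, 0} s_0 = R$ and $\delta^{1, 0} s_0^E = R^E$.

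For these identifications, I apply (\ref{Equation 3.1}) to $R$ and $R^E$. Since both carry a $dz^\alpha$ form-factor, $\pi_{0, *}$ annihilates them, and it suffices to check $\delta^{1, 0}(R) = 0 = \delta^{1, 0}(R^E)$. The latter is immediate, as $R^E = R^E_{\alpha \overline{\beta}} dz^\alpha \wedge d\overline{z}^\beta$ carries no $w^\mu$-factor. The former follows from the K\"ahler symmetry $R^\eta_{\alpha \overline{\beta} \mu} = R^\eta_{\mu \overline{\beta} \alpha}$ (equivalently, the symmetry of the holomorphic Christoffel symbols in their lower indices, which is where the K\"ahler hypothesis enters), and this symmetry is antisymmetrized to zero against the wedge $dz^\sigma \wedge dz^\alpha$ inside $\delta^{1, 0} = dz^\sigma \wedge \partial/\partial w^\sigma$.

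With these in hand, the hypothesis of Lemma \ref{Lemma 3.2} becomes $\nabla^{1, 0} R = 0$ and $\nabla^{1, 0} R^E = 0$. Both are the type-$(2, 1)$ part of the second Bianchi identity, $d^{\nabla} R = 0$ for the Levi-Civita connection and $d^{\nabla^E} R^E = 0$ for the Chern connection; since each curvature is of pure type $(1, 1)$, the $(2, 1)$- and $(1, 2)$-components decouple and must vanish separately, and the $(2, 1)$-part is precisely $\nabla^{1, 0}$ of the curvature.

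One mild technicality is that Lemma \ref{Lemma 3.2} was stated for coefficient bundles of the form $\mathcal{W}_{\operatorname{cl}}^{1, 0} \otimes E$ with $E$ Hermitian holomorphic, whereas $\widetilde{I}$ takes values in $\operatorname{Sym}^{\geq 2} T^\vee \otimes \overline{T}^\vee$. Inspecting its proof, the only structural requirement on the coefficient bundle is $(\nabla^{1, 0})^2 = 0$, which on $\overline{T}^\vee$ over a K\"ahler manifold follows from the pure-$(1, 1)$ type of the Levi-Civita curvature; the argument thus extends verbatim. I do not anticipate a serious obstacle here — the work is short bookkeeping, and the K\"ahler (resp.\ Hermitian-holomorphic) hypothesis enters only through the symmetry $\delta^{1, 0} R = 0$ and the Bianchi-based vanishings above.
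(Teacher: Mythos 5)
Your proof is correct and follows essentially the same route as the paper's: both identities are reduced to Lemma \ref{Lemma 3.2} by verifying its hypothesis via the Bianchi identity, with $\delta^{1, 0} \circ (\delta^{1, 0})^{-1}$ recovering $R$ and $R^E$ through (\ref{Equation 3.1}) exactly as you describe. The only difference is that for the first identity the paper simply cites the proof of Theorem 2.17 in \cite{ChaLeuLi2022b} (noting it is ``similar to the proof of Lemma \ref{Lemma 3.2}''), whereas you carry that argument out in full -- including the K\"ahler symmetry giving $\delta^{1, 0} R = 0$ and the observation that Lemma \ref{Lemma 3.2} applies with coefficients in $\overline{T}^\vee$ because only $(\nabla^{1, 0})^2 = 0$ is needed -- which mirrors the paper's own treatment of the second identity.
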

\begin{proof}
	The first equality is known to hold in the proof of Theorem 2.17 in \cite{ChaLeuLi2022b}, which is similar to the proof of Lemma \ref{Lemma 3.2}. Now we prove the second equality. As $\delta^{1, 0}R^E = 0$ and $\pi_{0, *}(R^E) = 0$, $\delta^{1, 0} (I_{(1)}^E) = R^E - \pi_{0, *}(R^E) - (\delta^{1, 0})^{-1} \circ \delta^{1, 0} (R^E) = R^E$. Then $(\nabla^{1, 0} \circ \delta^{1, 0})(I_{(1)}^E) = \nabla^{1, 0}(R^E) = 0$ by Bianchi identity. The second equality holds by Lemma \ref{Lemma 3.2}.
\end{proof}

\begin{proof}[\myproof{Proposition}{\ref{Proposition 4.1}}]
	The $(1, 0)$- and $(0, 1)$-parts of $D := D^{E_1, E_2}$ are
	\begin{align*}
		D^{1, 0} = & \nabla^{1, 0} - \delta^{1, 0},\\
		D^{0, 1} = & \overline{\partial} - \delta^{0, 1} + \tfrac{1}{\hbar} [\widetilde{I}, \quad]_\star + I^{E_2} \star - \star I^{E_1}
	\end{align*}
	respectively. We consider different components of the curvature $D^2$ separately.
	\begin{enumerate}
		\item The $(2, 0)$-part of $D^2$, i.e. $(D^{1, 0})^2$, vanishes, following from the fact that $(\nabla^{1, 0} - \delta^{1, 0})^2 = 0$.
		\item The $(1, 1)$-part of $D^2$, i.e. $[D^{1, 0}, D^{0, 1}]$,  is the sum of the terms
		\begin{equation*}
			[D^{1, 0}, \overline{\partial} - \delta^{0, 1}] = \tfrac{1}{\hbar} [R, \quad]_\star + R^{E_2} \star - \star R^{E_1}
		\end{equation*}
		and
		\begin{align*}
			& [D^{1, 0}, \tfrac{1}{\hbar} [\widetilde{I}, \quad]_\star + I^{E_2} \star - \star I^{E_1}]\\
			= & \tfrac{1}{\hbar} [ (\nabla^{1, 0} - \delta^{1, 0}) (\widetilde{I}), \quad ]_\star + ((\nabla^{1, 0} - \delta^{1, 0}) I^{E_2}) \star - \star ((\nabla^{1, 0} - \delta^{1, 0}) I^{E_1}),
		\end{align*}
		and hence vanishes by Lemma \ref{Lemma 4.3}.
		\item The $(0, 2)$-part of $D^2$, i.e. $(D^{0, 1})^2$, is given by
		\begin{equation*}
			(\overline{\partial} - \delta^{0, 1})^2 + [\overline{\partial} - \delta^{0, 1}, \tfrac{1}{\hbar} [\widetilde{I}, \quad]_\star + I^{E_2} \star - \star I^{E_1}] + (\tfrac{1}{\hbar} [\widetilde{I}, \quad]_\star + I^{E_2} \star - \star I^{E_1})^2.
		\end{equation*}
		First, we know that $(\overline{\partial} - \delta^{0, 1})^2 = 0$. Second, we have $\delta^{0, 1}\widetilde{I} = 0$ by Lemma 2.16 in \cite{ChaLeuLi2022b} and clearly $\delta^{0, 1}I^{E_i} = 0$ for $i = 1, 2$, whence
		\begin{equation*}
			[\overline{\partial} - \delta^{0, 1}, \tfrac{1}{\hbar} [\widetilde{I}, \quad]_\star + I^{E_2} \star - \star I^{E_1}] = \tfrac{1}{\hbar} [ \overline{\partial} \widetilde{I}, \quad ]_\star + (\overline{\partial} I^{E_1}) \star - \star (\overline{\partial} I^{E_1}).
		\end{equation*}
		Third, we have
		\begin{align*}
			& (\tfrac{1}{\hbar} [\widetilde{I}, \quad]_\star + I^{E_2} \star - \star I^{E_1})^2\\
			= & \tfrac{1}{\hbar} [ \tfrac{1}{\hbar} [\widetilde{I}, \widetilde{I}]_\star, \quad ]_\star + ( [I, I^{E_2}] + \tfrac{1}{2}[I^{E_2}, I^{E_2}] ) \star - \star (  [I, I^{E_1}] + \tfrac{1}{2}[I^{E_1}, I^{E_1}] ).
		\end{align*}
		It is known in the proof of Theorem 2.17 in \cite{ChaLeuLi2022b} that $\frac{1}{\hbar} [ \overline{\partial} \widetilde{I} + \frac{1}{\hbar} [\widetilde{I}, \widetilde{I}]_\star, \quad ]_\star = 0$. For $i = 1, 2$, the term $\overline{\partial}(I^{E_i}) + [I, I^{E_i}] + \tfrac{1}{2}[I^{E_i}, I^{E_i}]$ is indeed the subtraction of (\ref{Equation 3.2}) from (\ref{Equation 3.3}) and is hence equal to zero. Therefore, $(D^{0, 1})^2 = 0$.
	\end{enumerate}
\end{proof}

\begin{proposition}
	There exists a canonical quasi-isomorphism:
	\begin{equation*}
		(\mathcal{C}^\infty(M, \operatorname{Hom}(E_1, E_2))[[\hbar]], 0) \to (\Omega^*(M, \mathcal{W} \otimes \operatorname{Hom}(E_1, E_2)), D^{E_1, E_2}).
	\end{equation*}
\end{proposition}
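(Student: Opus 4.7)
The plan is to construct the quasi-isomorphism concretely by Fedosov's iterative procedure, mirroring the strategy of Lemma \ref{Lemma 3.7}. Given $\phi \in \mathcal{C}^\infty(M, \operatorname{Hom}(E_1, E_2))[[\hbar]]$, the goal is to produce a unique $P_\phi \in \mathcal{C}^\infty(M, \mathcal{W} \otimes \operatorname{Hom}(E_1, E_2))$ satisfying
\begin{equation*}
    D^{E_1, E_2} P_\phi = 0, \qquad \pi_0(P_\phi) = \phi, \qquad \delta^{-1}(P_\phi) = 0,
\end{equation*}
and then to verify that the assignment $\phi \mapsto P_\phi$ provides the desired quasi-isomorphism from the source complex (concentrated in degree zero with trivial differential).

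Write $D^{E_1, E_2} = -\delta + A$ with $A := \nabla + \nabla^{\operatorname{Hom}(E_1, E_2)} + \tfrac{1}{\hbar}[\widetilde{I}, \quad]_\star + I^{E_2} \star - \star I^{E_1}$, and equip $\Omega^*(M, \mathcal{W} \otimes \operatorname{Hom}(E_1, E_2))$ with the filtration by total weight in which $|\hbar| = 2$ and $|w^\mu| = |\overline{w}^\nu| = |dz^\alpha| = |d\overline{z}^\beta| = 1$. A direct check shows that $\delta$ preserves this weight while every summand of $A$ strictly increases it: $\tfrac{1}{\hbar}[\widetilde{I}_{(r)}, \quad]_\star$ raises weight by at least $r \geq 2$, each $I^{E_i}_{(r)} \star$ (or $\star\, I^{E_i}_{(r)}$) raises it by $r \geq 1$, the connection terms raise it by an additional form degree, and contractions in the anti-Wick product (\ref{Equation 4.2}) are weight-neutral since each contracted $w, \overline{w}$ pair is replaced by a factor of $\hbar$. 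Applying $\delta^{-1}$ to the flatness equation $\delta P_\phi = A P_\phi$ and invoking (\ref{Equation 4.1}) with the normalization conditions rewrites the problem as the fixed-point equation $P_\phi = \phi + \delta^{-1}(A P_\phi)$, which admits a unique solution by iteration on weight since $\delta^{-1} A$ strictly raises weight.

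One then verifies that this fixed point is genuinely $D^{E_1, E_2}$-closed by Fedosov's standard consistency argument: setting $R := D^{E_1, E_2} P_\phi$, the flatness $(D^{E_1, E_2})^2 = 0$ from Proposition \ref{Proposition 4.1} together with the fixed-point equation yields a recursive relation $R = \delta^{-1}(A R)$ after using $\delta^{-1} R = 0$ and $\pi_0 R = 0$, and weight induction forces $R = 0$. Since the source carries the zero differential, the map $\phi \mapsto P_\phi$ is automatically a cochain map.

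Finally, to establish the quasi-isomorphism property, I will run the spectral sequence associated with the weight filtration above. The filtration is exhaustive and complete (the latter because $\mathcal{W}$ involves completed symmetric powers), so the spectral sequence converges. Its $E_0$-differential is $-\delta$, whose cohomology on $\Omega^*(M, \mathcal{W} \otimes \operatorname{Hom}(E_1, E_2))$ is computed via the homotopy identity (\ref{Equation 4.1}) to be $\mathcal{C}^\infty(M, \operatorname{Hom}(E_1, E_2))[[\hbar]]$, concentrated in form-degree zero. The $E_1$-page thus consists of this space in a single bidegree, so the sequence degenerates, giving $H^*(\Omega^*(M, \mathcal{W} \otimes \operatorname{Hom}(E_1, E_2)), D^{E_1, E_2}) \cong \mathcal{C}^\infty(M, \operatorname{Hom}(E_1, E_2))[[\hbar]]$ concentrated in degree zero; the projection $\pi_0$ furnishes the inverse of $\phi \mapsto P_\phi$ on cohomology. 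The main technical point to verify carefully will be the weight-raising property of $A$, which requires delicate bookkeeping with the $\hbar$-factors produced by anti-Wick contractions; once that is in hand, the rest reduces to routine spectral-sequence and iteration arguments.
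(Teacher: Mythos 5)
Your construction is correct and its core is the same as the paper's: both produce the flat section by the Fedosov fixed-point equation $O_\phi = \phi + \delta^{-1}\underline{D}O_\phi$ (your $P_\phi = \phi + \delta^{-1}(A P_\phi)$) and solve it by induction on the weight filtration. Where you diverge is in how the quasi-isomorphism property is verified. The paper packages everything into the single cochain isomorphism $\Psi: (\Omega^*(M, \mathcal{W} \otimes \operatorname{Hom}(E_1, E_2)), D^{E_1, E_2}) \to (\Omega^*(M, \mathcal{W} \otimes \operatorname{Hom}(E_1, E_2)), -\delta)$, $\phi \mapsto \phi - \delta^{-1}(D^{E_1,E_2}+\delta)\phi$ of Lemma \ref{Lemma 4.5}; once $\Psi$ is known to be an isomorphism of complexes, the homotopy identity (\ref{Equation 4.1}) applied to the $(-\delta)$-complex finishes the argument, and the $D^{E_1,E_2}$-closedness of $O_\phi$ comes for free from $\Psi$ being a cochain isomorphism. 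You instead prove closedness by the separate Fedosov consistency argument ($R := D^{E_1,E_2}P_\phi$ satisfies $R = \delta^{-1}(AR)$, hence vanishes) and establish the quasi-isomorphism by the spectral sequence of the weight filtration, with $E_0$-differential $-\delta$ and degeneration at $E_1$. Both routes are valid; the paper's buys you a global inverse $\Psi^{-1}$ on the whole complex with no convergence discussion, while yours isolates the homological mechanism more transparently but obliges you to address conditional convergence of the spectral sequence for a complete decreasing filtration (harmless here since $E_1$ is concentrated in form-degree zero and the complex is filtration-complete, but it is an extra point to state). Your weight bookkeeping (with $\lvert \hbar \rvert = 2$, form degree counted, anti-Wick contractions weight-neutral, and each piece of $A$ strictly raising weight) is the correct and necessary verification in either approach.
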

\begin{proof}
	The inclusion
	\begin{equation*}
		(\mathcal{C}^\infty(M, \operatorname{Hom}(E_1, E_2))[[\hbar]], 0) \hookrightarrow (\Omega^*(M, \mathcal{W} \otimes \operatorname{Hom}(E_1, E_2)), -\delta)
	\end{equation*}
	is a quasi-isomorphism by (\ref{Equation 4.1}). Let $\Psi$ be the cochain isomorphism in Lemma \ref{Lemma 4.5}. Then the restriction of $\Psi^{-1}$ onto $\mathcal{C}^\infty(M, \operatorname{Hom}(E_1, E_2))[[\hbar]]$ gives the desired quasi-isomorphism.
\end{proof}

\begin{lemma}
	\label{Lemma 4.5}
	The map
	\begin{equation*}
		\Psi: (\Omega^*(M, \mathcal{W} \otimes \operatorname{Hom}(E_1, E_2)), D^{E_1, E_2}) \to (\Omega^*(M, \mathcal{W} \otimes \operatorname{Hom}(E_1, E_2)), -\delta)
	\end{equation*}
	given by $\phi \mapsto \phi - \delta^{-1} (D^{E_1, E_2} + \delta) \phi$ is a cochain isomorphism.
\end{lemma}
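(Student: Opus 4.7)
The proof will parallel Lemmas 3.7 and 3.10. Writing $D = D^{E_1, E_2}$ and $\underline{D} := D + \delta$, which by the Remark after Proposition 4.1 equals
\[ \nabla + \nabla^{\operatorname{Hom}(E_1, E_2)} + \tfrac{1}{\hbar}[\widetilde{I}, \cdot]_\star + I^{E_2}\star - \star I^{E_1}, \]
we have $\Psi(\phi) = \phi - \delta^{-1}\underline{D}\phi$. The plan is to first verify that $\Psi$ is a cochain map, and then construct an inverse via a Fedosov-filtration argument, exactly as in the earlier lemmas.

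For the cochain condition, I would compute directly
\[ \Psi D\phi + \delta\Psi\phi = \underline{D}\phi - \delta^{-1}\underline{D}D\phi - \delta\delta^{-1}\underline{D}\phi. \]
Using $D^2 = 0$ (Proposition 4.1) together with $D = \underline{D} - \delta$ gives $\underline{D}D = \delta\underline{D}$, and the identity $\delta\delta^{-1} + \delta^{-1}\delta = \operatorname{Id} - \pi_0$ from (4.1) then collapses the right-hand side to $\pi_0\underline{D}\phi$. Since every summand of $\underline{D}$ strictly raises form degree, $\pi_0\underline{D} = 0$, whence $\Psi D = -\delta\Psi$.

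For bijectivity, I would equip $\mathcal{W} \otimes \operatorname{Hom}(E_1, E_2)$ with the Fedosov filtration assigning $\operatorname{deg}(y^i) = 1$ and $\operatorname{deg}(\hbar) = 2$, and check that $\delta^{-1}\underline{D}$ strictly raises it. The prefactor $\delta^{-1}$ contributes $+1$; the operators $\nabla$ and $\nabla^{\operatorname{Hom}(E_1, E_2)}$ preserve Fedosov degree; and $I^{E_i}\star$ raises degree since $I^{E_i}$ has positive Fedosov degree and $\star$ is degree-preserving. For $\tfrac{1}{\hbar}[\widetilde{I}, \cdot]_\star$, the term $\widetilde{I}$ has Fedosov degree $\geq 3$, and the $\star$-commutator vanishes at order $\hbar^0$, so the resulting $\hbar$-expansion starts at $\hbar^1$; after cancelling against the $\tfrac{1}{\hbar}$ prefactor, one still obtains a net increase in Fedosov degree of at least $1$.

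Given this strict increase, for any $\phi$ the equation $P_\phi - \delta^{-1}\underline{D}P_\phi = \phi$ admits a unique solution, obtained by iteratively determining the graded components of $P_\phi$ in ascending Fedosov degree; this inverts $\Psi$ and completes the proof. I expect the main—though ultimately routine—obstacle to be the careful bookkeeping for the commutator term $\tfrac{1}{\hbar}[\widetilde{I}, \cdot]_\star$, where the contributions from the Fedosov degree of $\widetilde{I}$, the $\hbar^k$-expansion of $\star$, and the $\tfrac{1}{\hbar}$ prefactor must be balanced; everything else is formal and closely mirrors Lemmas 3.7 and 3.10.
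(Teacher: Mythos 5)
Your proposal is correct and follows essentially the same route as the paper: the same algebraic manipulation $\Psi D + \delta\Psi = \pi_0\underline{D} = 0$ using $D^2 = \delta^2 = 0$ and the homotopy identity (\ref{Equation 4.1}), and the same Fedosov-type filtration (weight $1$ for the fibre covectors, weight $2$ for $\hbar$) with an iterative inversion of $\operatorname{Id} - \delta^{-1}\underline{D}$. Your bookkeeping for why $\tfrac{1}{\hbar}[\widetilde{I}, \cdot]_\star$ raises the filtration degree is in fact more detailed than what the paper records.
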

\begin{proof}
	Write $D = D^{E_1, E_2}$ and $\underline{D} = D + \delta$. Then $\Psi(\phi) = \phi - \delta^{-1} \underline{D} \phi$. Since $D^2 = \delta^2 = 0$,
	\begin{equation*}
		\Psi D + \delta \Psi = D + \delta - \delta^{-1} \underline{D} D - \delta \delta^{-1} \underline{D} = \underline{D} - \delta^{-1} \delta \underline{D} - \delta \delta^{-1} \underline{D} = \pi_0 \underline{D} = 0
	\end{equation*}
	by (\ref{Equation 4.1}). Thus, $\Psi$ is a cochain map. It remains to show that $\Psi$ is a linear isomorphism.\par
	Indeed, we can assign a weight to each element in $\Omega^*(M, \mathcal{W} \otimes \operatorname{Hom}(E_1, E_2))$ by declaring that differential forms on $M$ and smooth sections of $\operatorname{Hom}(E_1, E_2)$ are of weight $0$, $\omega^\alpha, \overline{w}^\beta$ are of weight $1$ and $\hbar$ is of weight $2$. The space is then equipped with a decreasing filtration whose $r$th filtered piece contains elements of weight at least $r$ \footnote{The weight assignment and the filtration in this proof are not the same as those in Subsection \ref{Subsection 4.3} -- we will no longer use them for the rest of this paper.}. The operator $\delta^{-1}\underline{D}$ increases weight by at least $1$. We can show that, for any $\phi \in \Omega^*(M, \mathcal{W} \otimes \operatorname{Hom}(E_1, E_2))$, there is a unique solution $O_\phi \in \Omega^*(M, \mathcal{W} \otimes \operatorname{Hom}(E_1, E_2))$ to the equation
	\begin{equation*}
		O_\phi - \delta^{-1}\underline{D} O_\phi = \phi.
	\end{equation*}
	This is done by solving the graded pieces of $O_\phi$ iteratively. Thus, the proof is complete.
\end{proof}

Elements $\phi \in \mathcal{C}^\infty(M, \operatorname{Hom}(E_1, E_2))[[\hbar]]$ are then in bijection with $D^{E_1, E_2}$-flat sections $O_\phi$ of $\mathcal{W} \otimes \operatorname{Hom}(E_1, E_2)$, where $O_\phi$ is uniquely determined by the condition that $\pi_0(O_\phi) = \phi$.

\begin{remark}
	\label{Remark 4.6}
	Using the same type of arguments as the proof of Lemma 2.5 in \cite{ChaLeuLi2023}, one can show that a $D^{E_1, E_2}$-flat section $O$ of $\mathcal{W} \otimes \operatorname{Hom}(E_1, E_2)$ must be of the form
	\begin{equation}
		O = \sum_{r=0}^\infty (\widetilde{\nabla}^{1, 0})^r \widetilde{O},
	\end{equation}
	where $\widetilde{O}$ is the $\mathcal{W}^{0, 1} \otimes \operatorname{Hom}(E_1, E_2)$-component of $O$.
\end{remark}

\begin{proposition}
	\label{Proposition 4.7}
	Suppose $E_1, E_2, E_3$ are Hermitian holomorphic vector bundles. Then for all $\phi \in \mathcal{C}^\infty(M, \mathcal{W} \otimes \operatorname{Hom}(E_1, E_2))$ and $\psi \in \mathcal{C}^\infty(M, \mathcal{W} \otimes \operatorname{Hom}(E_2, E_3))$,
	\begin{equation*}
		D^{E_1, E_3} (\psi \star \phi) = (D^{E_2, E_3} \psi) \star \phi + \psi \star (D^{E_1, E_2} \phi).
	\end{equation*}
\end{proposition}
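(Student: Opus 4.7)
The plan is to verify the Leibniz identity term by term after decomposing $D^{E_1, E_2}$ according to the alternative expression in Remark 4.2:
\begin{equation*}
D^{E_1, E_2} = \nabla + \nabla^{\operatorname{Hom}(E_1, E_2)} - \delta + \tfrac{1}{\hbar}[\widetilde{I}, \cdot\,]_\star + I^{E_2}\star - \star I^{E_1}.
\end{equation*}
Using the identities $\delta^{1,0} = -\tfrac{1}{\hbar}[(\delta^{0,1})^{-1}\omega, \cdot\,]_\star$ and $\delta^{0,1} = -\tfrac{1}{\hbar}[(\delta^{1,0})^{-1}\omega, \cdot\,]_\star$, I would combine $-\delta + \tfrac{1}{\hbar}[\widetilde{I},\cdot\,]_\star = -\tfrac{1}{\hbar}[\gamma, \cdot\,]_\star$, which makes all but two of the terms of $D^{E_1,E_2}$ manifest as inner derivations of $\star$.

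For the inner part $-\tfrac{1}{\hbar}[\gamma, \cdot\,]_\star$, I would argue that once $\star$ is extended to $\operatorname{Hom}$-valued forms as in (4.5), its associativity and the scalar-valuedness of $\gamma$ (so that $\gamma \otimes \operatorname{Id}_{E_i}$ is graded-central with respect to composition in the $\operatorname{Hom}$-factor) yield the graded Leibniz rule $\tfrac{1}{\hbar}[\gamma,\psi\star\phi]_\star = \tfrac{1}{\hbar}[\gamma,\psi]_\star \star \phi + (-1)^{|\psi|}\psi \star \tfrac{1}{\hbar}[\gamma,\phi]_\star$ by a direct associativity computation. For the $I^{E_i}$-part, one computes directly that
\begin{equation*}
(I^{E_3}\star\psi - \psi\star I^{E_2})\star\phi + \psi\star(I^{E_2}\star\phi - \phi\star I^{E_1}) = I^{E_3}\star\psi\star\phi - \psi\star\phi\star I^{E_1},
\end{equation*}
so the $I^{E_2}$-terms telescope and the remaining expression matches $(I^{E_3}\star - \star I^{E_1})(\psi\star\phi)$.

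The only substantive point is the connection piece $\nabla + \nabla^{\operatorname{Hom}}$. Here I would invoke two facts: (i) the Chern connection on Hom bundles satisfies the standard Leibniz rule with respect to composition, $\nabla^{\operatorname{Hom}(E_1,E_3)}(\psi\circ\phi) = (\nabla^{\operatorname{Hom}(E_2,E_3)}\psi)\circ\phi + \psi\circ(\nabla^{\operatorname{Hom}(E_1,E_2)}\phi)$; and (ii) the Levi-Civita connection $\nabla$ on $\mathcal{W}$ is a derivation of the fibrewise anti-Wick product $\star$. Property (ii) follows from the Kähler condition $\nabla\omega = 0$ (equivalently $\nabla(\omega^{\overline{\nu}\mu}) = 0$) together with the fact that $\nabla$ commutes with the fibre derivatives $\partial/\partial w^\mu$ and $\partial/\partial\overline{w}^\nu$ appearing in (4.2), so the differentiation of (4.2) distributes over the two factors.

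The main obstacle I expect is bookkeeping rather than mathematics: one must verify that the extensions of $\star$ and $[\,\cdot\,,\,\cdot\,]_\star$ to $\operatorname{Hom}$-valued differential forms are compatible enough to support the inner-derivation argument (associativity across three different $\operatorname{Hom}$-bundles, and the correct graded signs coming from the form degrees). Once those formal properties are verified — most efficiently by reducing everything to the scalar $\star$ on $\mathcal{W}$ combined with ordinary composition on the $\operatorname{Hom}$-factor — the proposition follows by summing the Leibniz contributions from the five constituent pieces of $D^{E_1,E_2}$.
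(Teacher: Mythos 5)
The paper states Proposition \ref{Proposition 4.7} without proof, so there is no argument of the author's to compare against; your term-by-term Leibniz verification is correct and is presumably the intended one: the $\nabla + \nabla^{\operatorname{Hom}}$ part follows from $\nabla\omega = 0$ (so the bidifferential operator in (\ref{Equation 4.2}) is built from a $\nabla$-parallel tensor) together with the standard Leibniz rule for induced connections on hom-bundles, the $\tfrac{1}{\hbar}[\gamma, \cdot\,]_\star$ part is an inner derivation by associativity of the extended $\star$ across the three hom-bundles, and the $I^{E_2}$-terms telescope. One small slip: since $\delta = -\tfrac{1}{\hbar}[(\delta^{0,1})^{-1}\omega + (\delta^{1,0})^{-1}\omega, \cdot\,]_\star$ and $\gamma = (\delta^{0,1})^{-1}\omega + (\delta^{1,0})^{-1}\omega + \widetilde{I}$, the combination $-\delta + \tfrac{1}{\hbar}[\widetilde{I}, \cdot\,]_\star$ equals $+\tfrac{1}{\hbar}[\gamma, \cdot\,]_\star$ (consistent with the original formula in Proposition \ref{Proposition 4.1}), not $-\tfrac{1}{\hbar}[\gamma, \cdot\,]_\star$; this is immaterial, as either sign yields an inner derivation. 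Note also that the statement concerns sections rather than higher-degree forms, so all the graded signs you track reduce to $+1$ in the case actually needed.
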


Thus, $\star$ descends to a $\mathbb{C}[[\hbar]]$-linear map
\begin{equation}
	\label{Equation 4.9}
	\mathcal{C}^\infty(M, \operatorname{Hom}(E_2, E_3))[[\hbar]] \times \mathcal{C}^\infty(M, \operatorname{Hom}(E_1, E_2))[[\hbar]] \to \mathcal{C}^\infty(M, \operatorname{Hom}(E_1, E_3))[[\hbar]],
\end{equation}
which is still denoted by $\star$ by abuse of notations.

\subsection{Formal quantizability and degree}
\label{Subsection 4.3}
\quad\par
We first assign a weight to each element in $\mathcal{W} \otimes \operatorname{Hom}(E_1, E_2)$ by declaring that
\begin{itemize}
	\item $w^\alpha$ and elements in $\operatorname{Hom}(E_1, E_2)$ are of weight $0$, while
	\item $\overline{w}^\beta$ and $\hbar$ are of weight $1$.
\end{itemize}
This induces an increasing filtration
\begin{equation*}
	\mathcal{W}_{(0)} \otimes \operatorname{Hom}(E_1, E_2) \subset \mathcal{W}_{(1)} \otimes \operatorname{Hom}(E_1, E_2) \subset \cdots \mathcal{W}_{(r)} \otimes \operatorname{Hom}(E_1, E_2) \subset \cdots
\end{equation*}
of $\mathcal{W} \otimes \operatorname{Hom}(E_1, E_2)$, where $\mathcal{W}_{(r)}$ is the subbundle of elements of weight at most $r$ in $\mathcal{W}$ (note that $\mathcal{W}_{(0)} = \mathcal{W}_{\operatorname{cl}}^{1, 0} := \widehat{\operatorname{Sym}} T^\vee$), making $(\Omega^*(M, \mathcal{W} \otimes \operatorname{Hom}(E_1, E_2)), D^{E_1, E_2})$ a filtered cochain complex. We will call it the \emph{weight filtration}. 
\par
By (\ref{Equation 4.2}), we can see that the operator $\star$ preserves weight filtrations. In addition, observe that $\gamma \in \Omega^1(M, \underline{\mathcal{W}}_{\operatorname{cl}})$ and $I^{E_i} \in \Omega^1(M, \underline{\mathcal{W}}_{\operatorname{cl}} \otimes \operatorname{End}(E_i))$ for $i = 1, 2$, where $\underline{\mathcal{W}}_{\operatorname{cl}} := \widehat{\operatorname{Sym}} T^\vee \otimes \operatorname{Sym} \overline{T}^\vee$. Hence, we can easily verify that
\begin{equation*}
	\Omega^*(M, \underline{\mathcal{W}} \otimes \operatorname{Hom}(E_1, E_2))
\end{equation*}
is a subcomplex of $(\Omega^*(M, \mathcal{W} \otimes \operatorname{Hom}(E_1, E_2)), D^{E_1, E_2})$, where $\underline{\mathcal{W}} := \underline{\mathcal{W}}_{\operatorname{cl}} [\hbar]$.

\begin{definition}
	A \emph{formal quantizable morphism from} $E_1$ \emph{to} $E_2$ is an element
	\begin{equation*}
		\phi \in \mathcal{C}^\infty(M, \operatorname{Hom}(E_1, E_2))[[\hbar]]
	\end{equation*}
	for which the associated $D^{E_1, E_2}$-flat section $O_\phi$ lies in $\mathcal{C}^\infty(M, \underline{\mathcal{W}} \otimes \operatorname{Hom}(E_1, E_2))$.\par
	Let $r$ be a non-negative integer. A formal quantizable morphism $\phi$ from $E_1$ to $E_2$ is said to be \emph{of degree} $r$ if its associated $D^{E_1, E_2}$-flat section $O_\phi$ lies in $\mathcal{C}^\infty(M, \mathcal{W}_{(r)} \otimes \operatorname{Hom}(E_1, E_2))$.
\end{definition}

\begin{example}
	\label{Example 4.9}
	An element $\phi \in \mathcal{C}^\infty(M, \operatorname{Hom}(E_1, E_2))[[\hbar]]$ is formal quantizable of degree $0$ if and only if $\phi$ is a holomorphic section of $\operatorname{Hom}(E_1, E_2)$, in which case
	\begin{equation}
		O_\phi = \sum_{r=0}^\infty (\widetilde{\nabla}^{1, 0})^r \phi.
	\end{equation}
\end{example}

A direct consequence of Theorem \ref{Theorem 1.1} (to be proved in the next subsection) is that the enriched category $\mathsf{DQ}$ appeared in Theorem \ref{Theorem 1.1} has an enriched subcategory $\mathsf{DQ}_{\operatorname{qu}}$
\begin{itemize}
	\item which has the same objects as $\mathsf{DQ}$; and
	\item for any two objects $E_1, E_2$ of which, $\operatorname{Hom}_{\mathsf{DQ}_{\operatorname{qu}}}(E_1, E_2)$ is the sheaf of formal quantizable morphisms from $E_1$ to $E_2$.
\end{itemize}
In addition, $\operatorname{Hom}_{\mathsf{DQ}_{\operatorname{qu}}}(E_1, E_2)$ is a filtered left $\mathcal{O}_M$-module - a holomorphic function $f$ on $M$ acts on formal quantizable morphisms from $E_1$ to $E_2$ by composition with $f \operatorname{Id}_{E_2}$ and the filtration is given by the degrees of formal quantizable morphisms.

\begin{example}
	Suppose $E$ is a trivial Hermitian holomorphic line bundle over $M$. Then formal quantizable morphisms from $E$ to itself are exactly \emph{formal quantizable functions} on $M$, which are defined in \cite{ChaLeuLi2023}. We denote by $\mathcal{C}_{M, \operatorname{qu}}^\infty$ the sheaf of formal quantizable functions on $M$.
\end{example}

\subsection{Proof of Theorem \ref{Theorem 1.1}}
\label{Subsection 4.4}
\quad\par
Now, we prove our first main result, which states that we can quantize $(\mathsf{C}, \{\quad, \quad\})$, the classical category of Hermitian holomorphic vector bundles over $M$ equipped with covariantized Poisson brackets, so that the quantization we obtained is with separation of variables.
\begin{theorem}[$=$ Theorem \ref{Theorem 1.1}]
	Let $(M, \omega)$ be a K\"ahler manifold. Then there exists a deformation quantization $\mathsf{DQ}$ with separation of variables of $(\mathsf{C}, \{\quad, \quad\})$ such that
	\begin{itemize}
		\item for any objects $E_1, E_2, E_3$ in $\mathsf{DQ}$, the composition
		\begin{equation*}
			\operatorname{Hom}_{\mathsf{DQ}} (E_2, E_3) \otimes_{\mathbb{C}[[\hbar]]} \operatorname{Hom}_{\mathsf{DQ}} (E_1, E_2) \to \operatorname{Hom}_{\mathsf{DQ}} (E_1, E_3)
		\end{equation*}
		is given by $\star$ defined as in (\ref{Equation 4.9}).
		\item (\emph{degree preserving property}) for any objects $E_1, E_2$ in $\mathsf{DQ}$, open subset $U$ of $M$, $\phi \in \operatorname{Hom}_{\mathsf{DQ}} (E_1, E_2)(U)$ and $\psi \in \operatorname{Hom}_{\mathsf{DQ}} (E_2, E_3)(U)$, if $\phi, \psi$ are formal quantizable of degrees $r_1, r_2$ respectively, then $\psi \star \psi$ is formal quantizable of degree $r_1 + r_2$.
	\end{itemize}
\end{theorem}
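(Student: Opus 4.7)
The morphism sheaves $\operatorname{Hom}_{\mathsf{DQ}}(E_1, E_2) = \mathcal{C}^\infty(-, \operatorname{Hom}(E_1, E_2))[[\hbar]]$ are manifestly sheaves of $\mathbb{C}[[\hbar]]$-modules, so the task is to produce a composition $\star$ with the listed properties. The plan is to define $\star$ sectionally via (\ref{Equation 4.9}), transport every statement to the fibrewise anti-Wick product on $\mathcal{W}$ through the bijection $\phi \leftrightarrow O_\phi$ between formal power series morphisms and $D^{E_1, E_2}$-flat sections, and use the weight filtration of Subsection \ref{Subsection 4.3} for the finer degree statement.

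First I would globalise the construction: all data in Section \ref{Section 4} (the K\"ahler metric, Chern connections, $\gamma$, $I^{E_i}$, and hence $D^{E_1, E_2}$) are defined pointwise, so Lemma \ref{Lemma 4.5} and Proposition \ref{Proposition 4.7} hold on every open $U \subset M$, yielding compositions $\star_U$ which commute with restrictions by uniqueness of $O_\phi$. Associativity is inherited from associativity of the fibrewise anti-Wick product (a standard exponential-bidifferential calculation with the tensor $\omega^{\overline{\nu}\mu} \partial_{\overline{w}^\nu} \otimes \partial_{w^\mu}$) transported back through $\pi_0$; the unit $\operatorname{Id}_E$ has flat section equal to the fibrewise identity of $\mathcal{W} \otimes \operatorname{End}(E)$, giving the two-sided unit.

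The four properties are then checked fibrewise. The classical limit is immediate from $a \star b \equiv ab \pmod{\hbar}$ in (\ref{Equation 4.2}) combined with the multiplicativity of $\pi_0$. For separation of variables, Example \ref{Example 4.9} shows that a holomorphic $\psi$ has $O_\psi \in \mathcal{C}^\infty(U, \mathcal{W}^{1, 0} \otimes \operatorname{Hom}(E_2, E_3))$ and hence $\partial_{\overline{w}^\nu} O_\psi = 0$; by (\ref{Equation 4.2}) only the $r = 0$ term survives, giving $O_\psi \star O_\phi = O_\psi O_\phi$, whence $\psi \star \phi = \psi \phi$. The anti-holomorphic case of $\phi$ is the analogous statement that $O_\phi \in \mathcal{C}^\infty(U, \mathcal{W}^{0, 1} \otimes \operatorname{Hom}(E_1, E_2))$ when $\nabla^{1, 0}\phi = 0$, obtained by the $(0, 1)$-mirror of Example \ref{Example 4.9}, which forces $\partial_{w^\mu} O_\phi = 0$. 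For degree preservation, the weight assignment of Subsection \ref{Subsection 4.3} makes every summand of (\ref{Equation 4.2}) weight-neutral: $r$ factors of $\hbar$ contribute weight $+r$, $r$ derivatives $\partial_{\overline{w}^\nu}$ contribute weight $-r$, and $\partial_{w^\mu}$ is weight-neutral. Hence $\star$ preserves the weight filtration, and since $\underline{\mathcal{W}} \otimes \operatorname{Hom}(E_1, E_2)$ is a $D^{E_1, E_2}$-subcomplex, $O_{\psi \star \phi} = O_\psi \star O_\phi$ remains in it with weight at most $r_1 + r_2$.

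The hard part is the semi-classical limit. Expanding $O_{f \operatorname{Id}_{E_2}}$ modulo terms of weight $\geq 2$ yields
\begin{equation*}
O_{f \operatorname{Id}_{E_2}} = \bigl( f + (\partial_{z^\mu} f)\, w^\mu + (\partial_{\overline{z}^\nu} f)\, \overline{w}^\nu \bigr) \operatorname{Id}_{E_2} + (\text{weight} \geq 2),
\end{equation*}
and the $\hbar$-coefficient of $[O_{f \operatorname{Id}_{E_2}}, O_\phi]_\star$ extracted via (\ref{Equation 4.2}) becomes, modulo higher-weight corrections,
\begin{equation*}
\omega^{\overline{\nu}\mu}\bigl[(\partial_{\overline{z}^\nu} f)\, \nabla^{\operatorname{Hom}(E_1, E_2)}_{\partial_{z^\mu}} \phi - (\partial_{z^\mu} f)\, \nabla^{\operatorname{Hom}(E_1, E_2)}_{\partial_{\overline{z}^\nu}} \phi\bigr].
\end{equation*}
Matching this with $\nabla^{\operatorname{Hom}(E_1, E_2)}_{X_f} \phi$ requires invoking the K\"ahler identity $\omega^{ij}\partial_{x^i} \otimes \partial_{x^j} = \omega^{\overline{\nu}\mu}(\partial_{\overline{z}^\nu} \otimes \partial_{z^\mu} - \partial_{z^\mu} \otimes \partial_{\overline{z}^\nu})$ together with careful bookkeeping of higher-weight contributions to $O_\phi$ that could conceivably leak into the order-$\hbar$ output but in fact do not by degree reasons. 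All remaining verifications are direct consequences of the Fedosov--Kapranov machinery of Sections \ref{Section 4} and \ref{Subsection 4.3}.
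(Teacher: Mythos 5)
Your overall strategy coincides with the paper's: define $\star$ through the bijection $\phi \leftrightarrow O_\phi$ with $D^{E_1, E_2}$-flat sections, read off every property from the fibrewise formula (\ref{Equation 4.2}) after applying $\pi_0$, and use the weight filtration of Subsection \ref{Subsection 4.3} for the degree statement. The treatment of associativity, units, the classical and semi-classical limits, and degree preservation all match the paper's proof (your weight count for (\ref{Equation 4.2}) is precisely the paper's observation that $\star$ preserves weight filtrations).

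There is, however, one step that fails as written: the anti-holomorphic half of separation of variables. You invoke a ``$(0,1)$-mirror of Example \ref{Example 4.9}'', claiming that $\nabla^{1,0}\phi = 0$ forces $O_\phi \in \mathcal{C}^\infty(U, \mathcal{W}^{0,1} \otimes \operatorname{Hom}(E_1, E_2))$ and hence $\partial_{w^\mu} O_\phi = 0$. This mirror statement is false in general: the connection $D^{E_1, E_2}$ is not symmetric under $w \leftrightarrow \overline{w}$, because the correction terms $\widetilde{I}$, $I^{E_1}$, $I^{E_2}$ all take values in $\operatorname{Sym}^{\geq 1} T^\vee \otimes (\cdots)$. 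Concretely, the first iteration of the recursion $O_\phi = \phi + \delta^{-1}(D^{E_1, E_2} + \delta) O_\phi$ already produces the term $\delta^{-1}(I^{E_2} \phi - \phi I^{E_1})$, which has positive $w$-degree whenever $I^{E_2} \phi \neq \phi I^{E_1}$; even for the trivial line bundle, the $\tfrac{1}{\hbar}[\widetilde{I}, \quad]_\star$ term injects $w$'s at later iterations once $O_\phi$ acquires $\overline{w}$-components. So $\partial_{w^\mu} O_\phi \neq 0$ in general, and the $r \geq 1$ terms of (\ref{Equation 4.2}) do not vanish before $\pi_0$ is applied. What you actually need is weaker and true: since $\psi \star \phi = \pi_0(O_\psi \star O_\phi)$ and $\pi_0$ is multiplicative on the fibrewise symmetric product, only the $\overline{w}$-degree-zero component of $O_\phi$ contributes, and by Remark \ref{Remark 4.6} that component equals $\sum_{r \geq 0} (\widetilde{\nabla}^{1,0})^r \phi = \phi$ when $\nabla^{1,0}\phi = 0$, hence carries no $w$'s. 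Substituting this (which is exactly the paper's argument) closes the gap; the rest of your proof stands.
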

\begin{proof}
	Associativity of $\star$ is easily deduced from that of the fibrewise anti-Wick product, those of fibrewise compositions of linear maps, and Proposition \ref{Proposition 4.7}. Once we have proved that the condition of separation of variables is satisfied, we will see that, for any objects $E_1, E_2, E_3$ in $\mathsf{DQ}$ and open subset $U$ of $M$, as $\operatorname{Id}_{E_2 \vert_U}$ is both holomorphic and anti-holomorphic,
	\begin{align*}
		\operatorname{Id}_{E_2 \vert_U} \star \phi = \phi, & \quad \text{for any } \phi \in \operatorname{Hom}_{\mathsf{C}} (E_1, E_2)(U)[[\hbar]],\\
		\psi \star \operatorname{Id}_{E_2 \vert_U} = \psi, & \quad \text{for any } \psi \in \operatorname{Hom}_{\mathsf{C}} (E_2, E_3)(U)[[\hbar]],
	\end{align*}
	whence $\mathsf{DQ}$ is a well defined enriched category.\par
	Now we compute the formal power series $\phi \star \phi$ in $\hbar$ up to first order for $\phi \in \operatorname{Hom}_{\mathsf{C}} (E_1, E_2)(U)$ and $\psi \in \operatorname{Hom}_{\mathsf{C}} (E_2, E_3)(U)$. Note that $O_\phi$ is uniquely determined by the following equality:
	\begin{align*}
		O_\phi = & \phi + \delta^{-1} \left( D^{E_1, E_2} + \delta \right) O_\phi\\
		= & \phi + \delta^{-1} \left( (\nabla + \nabla^{\operatorname{Hom}(E_1, E_2)}) O_\phi + \tfrac{1}{\hbar} [\widetilde{I}, O_\phi ]_\star + I^{E_2} \star O_\phi - O_\phi \star I^{E_1} \right).
	\end{align*}
	As $\delta^{-1} \left( D^{E_1, E_2} + \delta \right) O_\phi$ has zero $\operatorname{Hom}(E_1, E_2)[[\hbar]]$-component, the $\operatorname{Hom}(E_1, E_2)[[\hbar]]$-component of $O_\phi$ is thus $\phi$; as $\delta^{-1} \left( \tfrac{1}{\hbar} [\widetilde{I}, O_\phi ]_\star + I^{E_2} \star O_\phi - O_\phi \star I^{E_1} \right)$ has zero $T_\mathbb{C}^\vee \otimes \operatorname{Hom}(E_1, E_2)$-component, the $T_\mathbb{C}^\vee \otimes \operatorname{Hom}(E_1, E_2)$-component of $O_\phi$ is $\delta^{-1} \nabla^{\operatorname{Hom}(E_1, E_2)} \phi$. Similarly, the $\operatorname{Hom}(E_2, E_3)[[\hbar]]$- and $T_\mathbb{C}^\vee \otimes \operatorname{Hom}(E_2, E_3)$-components of $O_\psi$ are $\psi$ and $\delta^{-1} \nabla^{\operatorname{Hom}(E_2, E_3)} \psi$ respectively. Then we can see from (\ref{Equation 4.2}) that
	\begin{equation*}
		\psi \star \phi = \psi \phi + \hbar \omega^{\overline{\nu}\mu} (\nabla_{\partial_{\overline{z}^\nu}}^{\operatorname{Hom}(E_2, E_3)} \psi) (\nabla_{\partial_{z^\mu}}^{\operatorname{Hom}(E_1, E_2)} \phi) \pmod {\hbar^2}.
	\end{equation*}
	Therefore, the condition of classical limit is satisfied.\par
	Next, consider $f \in \mathcal{C}^\infty(U, \mathbb{C})$. Since $X_f = \omega^{\overline{\nu}\mu} \left( \tfrac{\partial f}{\partial \overline{z}^\nu} \tfrac{\partial}{\partial z^\mu} - \tfrac{\partial f}{\partial z^\mu} \tfrac{\partial}{\partial \overline{z}^\nu} \right)$,
	\begin{equation*}
		(f \operatorname{Id}_{E_2}) \star \phi - \phi \star (f\operatorname{Id}_{E_1}) = \hbar \nabla_{X_f}^{\operatorname{Hom}(E_1, E_2)} \phi = \{f, \phi\} \pmod {\hbar^2}. 
	\end{equation*}
	Hence, the condition of semi-classical limit is satisfied.\par
	We then consider $\phi \in \operatorname{Hom}_{\mathsf{C}} (E_1, E_2)(U)[[\hbar]]$ and $\psi \in \operatorname{Hom}_{\mathsf{C}} (E_2, E_3)(U)[[\hbar]]$. To compute
	\begin{equation*}
		\psi \star \phi = \pi_0( O_\psi \star O_\phi )
	\end{equation*}
	we only need the $\mathcal{W}^{0, 1} \otimes \operatorname{Hom}(E_2, E_3)$-component of $O_\psi$ and the $\mathcal{W}^{1, 0} \otimes \operatorname{Hom}(E_1, E_2)$-component of $O_\phi$. If $\psi$ is holomorphic, then $O_\psi$ is a section of $\mathcal{W}_{\operatorname{cl}}^{1, 0} \otimes \operatorname{Hom}(E_2, E_3)$ by Example \ref{Example 4.9} and hence the $\mathcal{W}^{0, 1} \otimes \operatorname{Hom}(E_2, E_3)$-component of $O_\psi$ is $\psi$; if $\phi$ is anti-holomorphic, then the $\mathcal{W}^{1, 0} \otimes \operatorname{Hom}(E_1, E_2)$-component of $O_\phi$ is $\sum_{r=0}^\infty (\widetilde{\nabla}^{1, 0})^r \phi = \phi$ by Remark \ref{Remark 4.6}. In any one of the above cases, we see from (\ref{Equation 4.2}) again that $\psi \star \phi = \psi \phi$. Thus, the condition of separation of variables is satisfied.\par
	Finally, we can also easily deduce from (\ref{Equation 4.2}) that $\mathsf{DQ}$ has the degree preserving property.
\end{proof}

\subsection{Non-formal quantizable morphisms of Hermitian holomorphic vector bundles}
\label{Subsection 4.5}
\quad\par
Let $E_1, E_2$ be Hermitian holomorphic vector bundles over $M$ and $k$ be a non-zero complex number. We can evaluate $D^{E_1, E_2}$ at $\hbar = \tfrac{\sqrt{-1}}{k}$ without convergence issues and obtain a non-formal flat connection
\begin{equation}
	D_k^{E_1, E_2} = \nabla + \nabla^{\operatorname{Hom}(E_1, E_2)} + \tfrac{k}{\sqrt{-1}} [\gamma, \quad]_{\star_k} + I^{E_2} \star_k - \star_k I^{E_1}
\end{equation}
on $\underline{\mathcal{W}}_{\operatorname{cl}} \otimes \operatorname{Hom}(E_1, E_2)$. Here, recall that $\underline{\mathcal{W}}_{\operatorname{cl}} = \widehat{\operatorname{Sym}} T^\vee \otimes \operatorname{Sym} \overline{T}^\vee$, and we add a subscript $k$ in a symbol to denote its evaluation at $\hbar = \tfrac{\sqrt{-1}}{k}$.\par
We then have the following generalization of Definition 2.20 in \cite{ChaLeuLi2023}.
\begin{definition}
	Let $E_1, E_2$ be Hermitian holomorphic vector bundles over $M$ and $k \in \mathbb{C}$ be non-zero. A \emph{level}-$k$ \emph{quantizable morphism from} $E_1$ \emph{to} $E_2$ is a $D_k^{E_1, E_2}$-flat section of $\underline{\mathcal{W}}_{\operatorname{cl}} \otimes \operatorname{Hom}(E_1, E_2)$.
\end{definition}

\begin{remark}
	In general, a level-$k$ quantizable morphism $O$ from $E_1$ to $E_2$ is not determined by $\pi_0(O)$ -- there can be two distinct level-$k$ quantizable morphisms $O, O'$ from $E_1$ to $E_2$ such that $\pi_0(O) = \pi_0(O') \in \mathcal{C}^\infty(M, \operatorname{Hom}(E_1, E_2))$ (see Example 2.25 in \cite{ChaLeuLi2023}).
\end{remark}

Now we can construct the following category $\mathsf{DQ}_{\operatorname{qu}, k}$, enriched over $\mathsf{Sh}(M)$, as follows:
\begin{itemize}
	\item objects in $\mathsf{DQ}_{\operatorname{qu}, k}$ are Hermitian holomorphic vector bundles over $M$;
	\item for any two objects $E_1, E_2$ in $\mathsf{DQ}_{\operatorname{qu}, k}$, $\operatorname{Hom}_{\mathsf{DQ}_{\operatorname{qu}, k}}(E_1, E_2)$ is the sheaf of level-$k$ quantizable morphisms from $E_1$ to $E_2$;
	\item the composition in $\mathsf{DQ}_{\operatorname{qu}, k}$ is given by $\star_k$.
\end{itemize}
Regarding $\mathsf{DQ}_{\operatorname{qu}}$ as a category enriched over $\mathsf{Sh}(M)$, there is naturally an enriched functor
\begin{equation*}
	\operatorname{ev}_k: \mathsf{DQ}_{\operatorname{qu}} \to \mathsf{DQ}_{\operatorname{qu}, k}
\end{equation*}
given as follows:
\begin{itemize}
	\item for any object $E$ in $\mathsf{DQ}_{\operatorname{qu}}$, $\operatorname{ev}_k(E) = E$;
	\item for any objects $E_1, E_2$ in $\mathsf{DQ}_{\operatorname{qu}}$,
	\begin{equation}
		\label{Equation 4.12}
		\operatorname{ev}_k: \operatorname{Hom}_{\mathsf{DQ}_{\operatorname{qu}}}(E_1, E_2) \to \operatorname{Hom}_{\mathsf{DQ}_{\operatorname{qu}, k}}(E_1, E_2)
	\end{equation}
	is given by $\phi \mapsto O_\phi$ and then taking evaluations at $\hbar = \tfrac{\sqrt{-1}}{k}$.
\end{itemize}

By Theorem \ref{Theorem 1.1} again, $\operatorname{Hom}_{\mathsf{DQ}_{\operatorname{qu}, k}}(E_1, E_2)$ is a filtered left $\mathcal{O}_M$-module -- a holomorphic function $f$ on $M$ acts on level-$k$ quantizable morphisms from $E_1$ to $E_2$ via $O_{f \operatorname{Id}_{E_2}} \star_k$ and the left $\mathcal{O}_M$-module $\operatorname{Hom}_{\mathsf{DQ}_{\operatorname{qu}, k}}(E_1, E_2)$ inherits the weight filtration
\begin{equation*}
	\operatorname{F}_0\operatorname{Hom}_{\mathsf{DQ}_{\operatorname{qu}, k}}(E_1, E_2) \hookrightarrow \operatorname{F}_1\operatorname{Hom}_{\mathsf{DQ}_{\operatorname{qu}, k}}(E_1, E_2) \hookrightarrow \operatorname{F}_2\operatorname{Hom}_{\mathsf{DQ}_{\operatorname{qu}, k}}(E_1, E_2) \hookrightarrow \cdots.
\end{equation*}
The map (\ref{Equation 4.12}) is indeed a morphism of filtered left $\mathcal{O}_M$-modules, and it restricts to an isomorphism  of left $\mathcal{O}_M$-modules:
\begin{equation*}
	\underline{\operatorname{Hom}}_{\mathcal{O}_M}(\mathcal{E}_1, \mathcal{E}_2) \cong \operatorname{F}_0 \operatorname{Hom}_{\mathsf{DQ}_{\operatorname{qu}, k}}(E_1, E_2).
\end{equation*}

\begin{example}
	Suppose $E$ is a trivial Hermitian holomorphic line bundle over $M$. Then level-$k$ quantizable morphisms from $E$ to itself are exactly \emph{level-$k$ quantizable functions} on $M$, which are again defined in \cite{ChaLeuLi2023}. We denote by $\mathcal{C}_{M, \operatorname{qu}, k}^\infty$ the sheaf of level-$k$ quantizable functions on $M$.
\end{example}

\section{Actions of the quantum category of Hermitian holomorphic vector bundles}
\label{Section 5}
Assume the K\"ahler manifold $(M, \omega)$ is prequantizable and pick a prequantum line bundle $L$. Instead of a non-zero complex number, we suppose $k \in \mathbb{Z}^+$ is a positive integer. In \cite{ChaLeuLi2023}, Chan-Leung-Li proved that the sheaf $\mathcal{C}_{M, \operatorname{qu}, k}^\infty$ of level-$k$ quantizable functions on $M$ is isomorphic to the sheaf $\mathcal{D}(\mathcal{L}^{\otimes k}, \mathcal{L}^{\otimes k})$ of holomorphic differential operators from $L^{\otimes k}$ to itself (see also \cite{Yau2024}).\par
In this section, we will prove a categorical generalization (Theorem \ref{Theorem 1.2}) of the above result. In Subsection \ref{Subsection 5.1}, we will discuss Berezin-Toeplitz quantization as a motivation of Theorem \ref{Theorem 1.2}. In subsection \ref{Subsection 5.2}, we will define the category $\mathsf{GQ}$ appeared in Theorem \ref{Theorem 1.2}. Then in Subsection \ref{Subsection 5.3}, we will construct the fibrewise Bargmann-Fock action, which is a key ingredient in the proof of this theorem. Eventually, Theorem \ref{Theorem 1.2} will be proved in Subsection \ref{Subsection 5.4}.

\subsection{A motivation of Theorem \ref{Theorem 1.2}: Berezin-Toeplitz quantization}
\label{Subsection 5.1}
\quad\par
In this subsection, suppose $M$ is compact. Geometric quantization provides a recipe to construct a Hilbert space mathematically realizing the space of quantum states for quantum mechanics on the phase space $(M, \omega)$, which is $H^0(M, L^{\otimes k})$ when K\"ahler polarization is chosen. Physicists expect that it is not only a Hilbert space, but also a module over deformation quantization of $(M, \omega)$.\par
Another quantization scheme, known as \emph{Berezin-Toeplitz quantization}, is a microlocal-theoretic approach to construct such a module structure. Upon Berezin-Toeplitz quantization, every smooth function $f \in \mathcal{C}^\infty(M, \mathbb{C})$ is assigned to its family of \emph{Toeplitz operators} parametrized by $k \in \mathbb{Z}^+$:
\begin{equation*}
	T_{f, k} = \Pi \circ f \circ \Pi: H^0(M, L^{\otimes k}) \to H^0(M, L^{\otimes k}).
\end{equation*}
Here, $\Pi: L^2(M, L^{\otimes k}) \to H^0(M, L^{\otimes k})$ is the orthogonal projection and multiplication by $f$ is also denoted by $f$ by abuse of notations. Note that this construction relies on the $L^2$-inner product on $L^2(M, L^{\otimes k})$ and hence requires compactness of $M$. Schlichenmaier \cite{Sch2000} proved, via microlocal analysis, that the asymptotic expansion of $T_{f, k} \circ T_{g, k}$ (as $k \to \infty$) for any $f, g \in \mathcal{C}^\infty(M, \mathbb{C})$ defines a star product $\star_{\operatorname{BT}}$ on $(M, \omega)$. Roughly speaking, Toeplitz operators provide an `asymptotic' (but not an honest) module structure on $H^0(M, L^{\otimes k})$.\par
When geometric quantization of $(M, \omega)$ is coupled with a Hermitian holomorphic vector bundle $E$ over $M$, the concerned Hilbert space becomes the kernel of the $\operatorname{spin}^{\operatorname{c}}$-Dirac operator $\slashed{D} = \sqrt{2}(\overline{\partial} + \overline{\partial}^*)$ on $\Omega^{0, *}(M, E \otimes L^{\otimes k})$, identified with $H^*(M, E \otimes L^{\otimes k})$ by Hodge theory. In this case, \emph{Toeplitz operators} associated with a smooth section $\phi \in \mathcal{C}^\infty(M, \operatorname{End}(E))$ are given by
\begin{equation*}
	T_{\phi, k}^E = \Pi^E \circ \phi \circ \Pi^E: H^*(M, E \otimes L^{\otimes k}) \to H^*(M, E \otimes L^{\otimes k}),
\end{equation*}
where $\Pi^E: L^2 \left(M, \bigwedge \overline{T}^\vee \otimes E \otimes L^{\otimes k}\right) \to \ker \slashed{D} \cong H^*(M, E \otimes L^{\otimes k})$ is the orthogonal projection (see \cite{MaMar2008}). In \cite{MaMar2012}, Ma-Marinescu proved a generalized result that the asymptotic expansion of $T_{\psi, k}^E \circ T_{\phi, k}^E$'s for any $\phi, \psi \in \mathcal{C}^\infty(M, \operatorname{End}(E))$ defines a $\operatorname{End}(E)$-valued star product on $(M, \omega)$.\par
Indeed, for each pair of Hermitian holomorphic vector bundles $E_1, E_2$ over $M$, any smooth section $\phi \in \mathcal{C}^\infty(M, \operatorname{Hom}(E_1, E_2))$ can still be assigned to an analogous family of operators parametrized by $k \in \mathbb{Z}^+$:
\begin{equation*}
	T_{\phi, k}^{E_1, E_2} = \Pi^{E_2} \circ \phi \circ \Pi^{E_1}: H^*(M, E_1 \otimes L^{\otimes k}) \to H^*(M, E_2 \otimes L^{\otimes k}).
\end{equation*}
In \cite{AdaIshKan2023}, Adachi-Ishiki-Kanno investigated the asymptotic expansion of $T_{\psi, k}^{E_2, E_3} \circ T_{\phi, k}^{E_1, E_2}$ for any Hermitian holomorphic vector bundles $E_1, E_2, E_3$ over $M$ and $\phi \in \mathcal{C}^\infty(M, \operatorname{Hom}(E_1, E_2))$ and $\psi \in \mathcal{C}^\infty(M, \operatorname{Hom}(E_2, E_3))$. Their results stated in (2.10), (2.12) and (2.14) in \cite{AdaIshKan2023} can be reformulated as that the above asymptotic expansion defines a deformation quantization $\mathsf{DQ}_{\operatorname{BT}}$ of $(\mathsf{C}, \{\quad, \quad\})$ in the sense of Definition \ref{Definition 2.5}. Heuristically, we can regard the family of operators $T_{\phi, k}^{E_1, E_2}$'s as an asymptotic action of $\mathsf{DQ}_{\operatorname{BT}}$ on the family of spaces $H^*(M, E \otimes L^{\otimes k})$'s.

\subsection{Categorification of geometric quantization}
\label{Subsection 5.2}
\quad\par
In this paper, we are only interested in operators
\begin{equation*}
	H^*(M, E_1 \otimes L^{\otimes k}) \to H^*(M, E_2 \otimes L^{\otimes k})
\end{equation*}
which are holomorphic differential operators. We will take Chan-Leung-Li's sheaf-theoretic approach \cite{ChaLeuLi2023} to send quantizable morphisms to holomorphic differential operators via Bargmann-Fock actions. Under this approach, there is no need to assume that $M$ is compact. We will also see from Theorem \ref{Theorem 1.2} that there is an honest action of $\mathsf{DQ}_{\operatorname{qu}, k}$ on the family of sheaves $\mathcal{E} \otimes \mathcal{L}^{\otimes k}$'s, or more precisely, an enriched functor from $\mathsf{DQ}_{\operatorname{qu}, k}$ to the enriched category $\mathsf{GQ}$ defined as follows.

\begin{definition}
	We define the category $\mathsf{GQ}$, enriched over $\mathsf{Sh}(M)$, as follows:
	\begin{itemize}
		\item objects in $\mathsf{GQ}$ are holomorphic vector bundles over $M$;
		\item for any objects $E_1, E_2$ in $\mathsf{GQ}$,
		\begin{equation*}
			\operatorname{Hom}_{\mathsf{GQ}}(E_1, E_2) = \mathcal{D}(\mathcal{E}_1, \mathcal{E}_2);
		\end{equation*}
		\item for any objects $E_1, E_2, E_2$ in $\mathsf{GQ}$,
		\begin{equation*}
			\operatorname{Hom}_{\mathsf{GQ}}(E_2, E_3) \otimes_\mathbb{C} \operatorname{Hom}_{\mathsf{GQ}}(E_1, E_2) \to \operatorname{Hom}_{\mathsf{GQ}}(E_1, E_3)
		\end{equation*}
		is the usual composition of holomorphic differential operators.
	\end{itemize}
\end{definition}

\begin{remark}
	Unlike $\mathsf{DQ}$, we do not require objects in $\mathsf{GQ}$ to be equipped with Hermitian metrics. We will see that Hermitian metrics are encoded in the category $\mathsf{DQ}_{\operatorname{qu}, k}$ and a functor $\mathsf{DQ}_{\operatorname{qu}, k} \to \mathsf{GQ}$.
\end{remark}

Note that $\operatorname{Hom}_{\mathsf{GQ}}(E_1, E_2) = \mathcal{D}(\mathcal{E}_1, \mathcal{E}_2)$ is a filtered $\mathcal{O}_M$-module equipped with the natural filtration induced by the orders of holomorphic differential operators. The composition in $\mathsf{GQ}$ preserves the filtrations on morphism sheaves, but not the $\mathcal{O}_M$-module structures.\par
Denote by $\mathsf{GQ}^{\operatorname{op}}$ the opposite enriched category of $\mathsf{GQ}$. There is an enriched functor
\begin{equation*}
	\tau: \mathsf{GQ} \to \mathsf{GQ}^{\operatorname{op}}
\end{equation*}
defined as follows:
\begin{itemize}
	\item for any object $E$ in $\mathsf{GQ}$, $\tau(E) = \operatorname{Hom}(E, K)$, where $K$ is the canonical bundle of $M$;
	\item for any objects $E_1, E_2$ in $\mathsf{GQ}$,
	\begin{equation*}
		\tau: \operatorname{Hom}_{\mathsf{GQ}}(E_1, E_2) \to \operatorname{Hom}_{\mathsf{GQ}}(\operatorname{Hom}(E_2, K), \operatorname{Hom}(E_1, K))
	\end{equation*}
	is given by sending a holomorphic differential operator to its \emph{holomorphic transposed operator} (see Appendix \ref{Appendix B}).
\end{itemize}
Moreover, $\tau$ yields an equivalence of the enriched categories $\mathsf{GQ}$ and $\mathsf{GQ}^{\operatorname{op}}$.

\subsection{The fibrewise Bargmann-Fock action and Kapranov's connection}
\label{Subsection 5.3}
\quad\par
We first define the fibrewise Bargmann-Fock action $\circledast_k$ (in the anti-Wick ordering). For any Hermitian holomorphic vector bundle $E$ over $M$, let $\mathscr{T}_k(E) = E \otimes L^{\otimes k}$. 

\begin{definition}
	Let $E_1, E_2$ be Hermitian holomorphic vector bundles over $M$ and $k \in \mathbb{Z}^+$. Define a $\mathcal{C}^\infty(M, \mathbb{C})$-bilinear map
	\begin{equation}
		\label{Equation 5.1}
		\mathcal{C}^\infty(M, \underline{\mathcal{W}}_{\operatorname{cl}} \otimes \operatorname{Hom}(E_1, E_2)) \times \mathcal{C}^\infty(M, \mathcal{W}_{\operatorname{cl}}^{1, 0} \otimes \mathscr{T}_k(E_1)) \to \mathcal{C}^\infty(M, \mathcal{W}_{\operatorname{cl}}^{1, 0} \otimes \mathscr{T}_k(E_2))
	\end{equation}
	as follows. We write the above map as $(a, s) \mapsto a \circledast_k s$. For $a = w^{\mu_1} \cdots w^{\mu_p} \overline{w}^{\nu_1} \cdots \overline{w}^{\nu_q} \otimes A$ with $A \in \mathcal{C}^\infty(M, \operatorname{Hom}(E_1, E_2))$ and $s \in \mathcal{C}^\infty(M, \mathcal{W}_{\operatorname{cl}}^{1, 0} \otimes \mathscr{T}_k(E_1))$,
	\begin{equation}
		\label{Equation 5.2}
		a \circledast_k s = A \left( \left( \frac{\sqrt{-1}}{k} \right)^q \omega^{\overline{\nu}_1\lambda_1} \cdots \omega^{\overline{\nu}_q\lambda_q} w^{\mu_1} \cdots w^{\mu_p} \frac{\partial^q}{\partial w^{\lambda_1} \cdots \partial w^{\lambda_q}} (s) \right).
	\end{equation}
	In (\ref{Equation 5.2}), $A$ is regarded as a section of $\operatorname{Hom}(\mathcal{W}_{\operatorname{cl}}^{1, 0} \otimes \mathscr{T}_k(E_1), \mathcal{W}_{\operatorname{cl}}^{1, 0} \otimes \mathscr{T}_k(E_2))$.
\end{definition}

Note that if we have three arbitrary Hermitian holomorphic vector bundles $E_1, E_2, E_3$ over $M$, $a \in \mathcal{C}^\infty(M, \underline{\mathcal{W}}_{\operatorname{cl}} \otimes \operatorname{Hom}(E_2, E_3))$, $b \in \mathcal{C}^\infty(M, \underline{\mathcal{W}}_{\operatorname{cl}} \otimes \operatorname{Hom}(E_1, E_2))$ and $s \in \mathcal{C}^\infty(M, \mathcal{W}_{\operatorname{cl}}^{1, 0} \otimes \mathscr{T}_k(E_1))$, then
\begin{equation}
	\label{Equation 5.3}
	a \circledast_k (b \circledast_k s) = (a \star_k b) \circledast_k s.
\end{equation}

When $E_1 = E_2 = E_3$ is a trivial Hermitian holomorphic line bundle $E$ over $M$, Chan-Leung-Li \cite{ChaLeuLi2023} showed that the operation $\circledast_k$ descends to a morphism of sheaves on $M$:
\begin{equation*}
	\mathcal{C}_{M, \operatorname{qu}, k}^\infty \times \mathcal{L}^{\otimes k} \to \mathcal{L}^{\otimes k}.
\end{equation*}
They proved it by constructing a flat connection on $\mathcal{W}_{\operatorname{cl}}^{1, 0} \otimes L^{\otimes k}$ which is compatible with the flat connection $D_k^{E, E}$ on $\mathcal{C}^\infty(M, \underline{\mathcal{W}}_{\operatorname{cl}})$ and the operation $\circledast_k$. This connection is given by
\begin{equation}
	\nabla + \nabla^{L^{\otimes k}} + \tfrac{k}{\sqrt{-1}} \gamma \circledast_k
\end{equation}
(see also \cite{Yau2024} for a similar construction when $M$ is spin and $E_1 = E_2 = E_3$ is a square root of the canonical bundle of $M$). In the rest of this subsection, we will clarify that Chan-Leung-Li's flat connection is indeed the Kapranov's connection $D_{\operatorname{Kap}}^{L^{\otimes k}}$ given as in (\ref{Equation 3.4}).

\begin{proposition}
	\label{Proposition 5.4}
	Let $E$ be a Hermitian holomorphic vector bundle over $M$. Then
	\begin{equation*}
		D_{\operatorname{Kap}}^{E \otimes L^{\otimes k}} = \nabla + \nabla^{E \otimes L^{\otimes k}} + \tfrac{k}{\sqrt{-1}} \gamma \circledast_k + I^E, \quad \text{on } \mathcal{W}_{\operatorname{cl}}^{1, 0} \otimes E \otimes L^{\otimes k}.
	\end{equation*}
\end{proposition}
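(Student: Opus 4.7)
The plan is to combine the formula (3.4) for the Kapranov connection with an explicit analysis of the three summands of $\gamma$. Starting from the definition, the statement is equivalent to showing, on $\mathcal{W}^{1,0}_{\operatorname{cl}} \otimes E \otimes L^{\otimes k}$, that
\begin{equation*}
	-\delta^{1,0} + I + I^{E \otimes L^{\otimes k}} = \tfrac{k}{\sqrt{-1}} \gamma \circledast_k + I^E.
\end{equation*}
Additivity of Chern connections on tensor products gives $\nabla^{E \otimes L^{\otimes k}} = \nabla^E + \nabla^{L^{\otimes k}}$, hence $R^{E \otimes L^{\otimes k}} = R^E \otimes \operatorname{Id} + \operatorname{Id} \otimes R^{L^{\otimes k}}$, and the recursive definition of $I^{\bullet}$ yields $I^{E \otimes L^{\otimes k}} = I^E + I^{L^{\otimes k}}$ (with $I^{L^{\otimes k}}$ acting trivially on the $E$-factor). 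After cancelling $I^E$, the claim reduces to the identity
\begin{equation*}
	-\delta^{1,0} + I + I^{L^{\otimes k}} = \tfrac{k}{\sqrt{-1}} \gamma \circledast_k \quad \text{on } \mathcal{W}^{1,0}_{\operatorname{cl}} \otimes L^{\otimes k}. \qquad (\star)
\end{equation*}

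Next I would decompose $\gamma = (\delta^{0,1})^{-1}\omega + (\delta^{1,0})^{-1}\omega + \widetilde{I}$ and match each summand on the RHS with a summand on the LHS. In local coordinates, $(\delta^{0,1})^{-1}\omega = -\omega_{\alpha\overline{\beta}} dz^\alpha \overline{w}^\beta$ carries a single $\overline{w}$-factor, so its $\tfrac{k}{\sqrt{-1}}\circledast_k$-action produces one $w$-derivative; using $\omega_{\alpha\overline{\beta}} \omega^{\overline{\beta}\lambda} = \delta^\lambda_\alpha$ this evaluates to $-\delta^{1,0}$. The summand $(\delta^{1,0})^{-1}\omega = \omega_{\mu\overline{\beta}} w^\mu d\overline{z}^\beta$ has no $\overline{w}$-factor, so acts by multiplication; the prequantum condition $R^L = -\sqrt{-1}\omega$ yields $I^{L^{\otimes k}}_{(1)} = (\delta^{1,0})^{-1} R^{L^{\otimes k}} = \tfrac{k}{\sqrt{-1}}(\delta^{1,0})^{-1}\omega$. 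Moreover, $\omega_{\mu\overline{\beta}} w^\mu d\overline{z}^\beta$ is parallel on a K\"ahler manifold (since $\nabla \omega = 0$), so $\nabla^{1,0} I^{L^{\otimes k}}_{(1)} = 0$ and therefore $I^{L^{\otimes k}}_{(r)} = (\widetilde{\nabla}^{1,0})^{r-1} I^{L^{\otimes k}}_{(1)} = 0$ for $r \geq 2$. Hence $I^{L^{\otimes k}}$ reduces to its first term, matching the $(\delta^{1,0})^{-1}\omega$-contribution exactly.

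The remaining identity $\tfrac{k}{\sqrt{-1}} \widetilde{I} \circledast_k = I$ on $\mathcal{W}^{1,0}_{\operatorname{cl}} \otimes L^{\otimes k}$ is the heart of the proof. Since $\widetilde{I}_{(r)} \in \Omega^{0,1}(\operatorname{Sym}^r T^\vee \otimes \overline{T}^\vee)$ has exactly one $\overline{w}$-factor, the contraction $\tfrac{k}{\sqrt{-1}} \widetilde{I}_{(r)} \circledast_k$ is an $\Omega^{0,1}(M)$-valued derivation of $\mathcal{W}^{1,0}_{\operatorname{cl}}$, and I check degree by degree that it equals $I_{(r)}$ on generators $w^\lambda$. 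The base case $r=2$ is explicit:
\begin{equation*}
	\widetilde{I}_{(2)} = -\tfrac{1}{2}\omega_{\eta\overline{\nu}} R^\eta_{\alpha\overline{\beta}\mu} w^\alpha w^\mu d\overline{z}^\beta \overline{w}^\nu, \qquad I_{(2)}(w^\lambda) = -\tfrac{1}{2} R^\lambda_{\alpha\overline{\beta}\mu} w^\alpha w^\mu d\overline{z}^\beta,
\end{equation*}
and the contraction $\omega_{\eta\overline{\nu}} \omega^{\overline{\nu}\lambda} = \delta^\lambda_\eta$ sends one to the other. The inductive step relies on $\widetilde{I}_{(r+1)} = \widetilde{\nabla}^{1,0} \widetilde{I}_{(r)}$ on the tensor side and $I_{(r+1)} = \widetilde{\nabla}^{1,0} I_{(r)}$ (as derivations) on the operator side, combined with the fact that $\widetilde{\nabla}^{1,0} = (\delta^{1,0})^{-1} \nabla^{1,0}$ commutes with the $\overline{w}$-to-$\omega^{\overline{\nu}\lambda}\partial/\partial w^\lambda$ contraction on a K\"ahler manifold.

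The main obstacle is precisely this last commutativity, which requires that $\nabla^{1,0}\omega^{\overline{\nu}\lambda} = 0$ (a K\"ahler identity) and that passing $\partial/\partial w^\lambda$ through multiplication-by-$w$ terms produces only Leibniz corrections that match the derivation-extension of $I$. Apart from careful index bookkeeping, this step is an algebraic verification; once established, the induction closes and, assembling the three pieces, $(\star)$ follows.
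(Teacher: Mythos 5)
Your argument is correct and follows essentially the same route as the paper: after the (implicit in the paper, explicit in your write-up) additivity $I^{E \otimes L^{\otimes k}} = I^E + I^{L^{\otimes k}}$, you decompose $\gamma$ into its three summands and verify exactly the three identities that the paper isolates as Lemmas \ref{Lemma 5.5}, \ref{Lemma 5.6} and \ref{Lemma 5.7}, namely $\tfrac{k}{\sqrt{-1}}((\delta^{0,1})^{-1}\omega)\circledast_k = -\delta^{1,0}$, $I^{L^{\otimes k}} = \tfrac{k}{\sqrt{-1}}(\delta^{1,0})^{-1}\omega$ (with vanishing higher terms via $\nabla\omega = 0$), and $\tfrac{k}{\sqrt{-1}}\widetilde{I}\circledast_k = I$. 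The only cosmetic difference is that you establish the last identity by induction on $r$ using compatibility of $\widetilde{\nabla}^{1,0}$ with the $\omega$-contraction, where the paper simply reads it off from the local expression of $\widetilde{I}_{(r)}$; the content is the same.
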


\begin{lemma}
	\label{Lemma 5.5}
	On $\mathcal{W}_{\operatorname{cl}}^{1, 0} \otimes E \otimes L^{\otimes k}$, $\tfrac{k}{\sqrt{-1}} ( (\delta^{0, 1})^{-1} \omega ) \circledast_k = -\delta^{1, 0}$.
\end{lemma}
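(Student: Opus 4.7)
The plan is to verify this identity by a direct computation in local complex coordinates, since both sides of the claimed equation are explicit local operators on $\mathcal{C}^\infty(M, \mathcal{W}_{\operatorname{cl}}^{1,0} \otimes E \otimes L^{\otimes k})$ defined by simple formulas (a contraction coming from $\circledast_k$ on one side, and $\delta^{1,0}$ on the other). No global or cohomological input is needed: just an unravelling of definitions and a cancellation of $\omega$ against $\omega^{-1}$.

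First I would unwind $(\delta^{0,1})^{-1}\omega$ locally. Writing $\omega = \omega_{\alpha\bar{\beta}}\,dz^\alpha \wedge d\bar{z}^\beta$, note $\omega$ has no $\bar{w}^\nu$ factors and exactly one $d\bar{z}^\beta$, so the formula $(\delta^{0,1})^{-1} = \tfrac{1}{l+q}\,\bar{w}^\nu\,\iota_{\partial_{\bar{z}^\nu}}$ applies with $l+q=1$. Together with the graded Leibniz rule $\iota_{\partial_{\bar{z}^\nu}}(dz^\alpha \wedge d\bar{z}^\beta) = -\delta^\beta_\nu\,dz^\alpha$, this gives
$$(\delta^{0,1})^{-1}\omega \;=\; -\,\omega_{\alpha\bar{\nu}}\,\bar{w}^\nu\,dz^\alpha \;\in\; \Omega^{1,0}(M,\overline{T}^\vee) \,\subset\, \Omega^{1,0}(M,\underline{\mathcal{W}}_{\operatorname{cl}}).$$

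Next I would feed this into (\ref{Equation 5.2}). For the monomial $\bar{w}^\nu$ (with $p=0$, $q=1$, and $A = \operatorname{Id}$), the defining formula of $\circledast_k$ reads
$$\bar{w}^\nu \circledast_k s \;=\; \frac{\sqrt{-1}}{k}\,\omega^{\bar{\nu}\lambda}\,\frac{\partial s}{\partial w^\lambda}.$$
Extending $\circledast_k$ in the natural $\Omega^*(M)$-bilinear way so the $dz^\alpha$ factor wedges onto the output, and using $\omega_{\alpha\bar{\nu}}\omega^{\bar{\nu}\lambda} = \delta^\lambda_\alpha$, I obtain for any $s \in \mathcal{C}^\infty(M, \mathcal{W}_{\operatorname{cl}}^{1,0} \otimes E \otimes L^{\otimes k})$:
$$\bigl((\delta^{0,1})^{-1}\omega\bigr)\circledast_k s \;=\; -\,\omega_{\alpha\bar{\nu}}\,\frac{\sqrt{-1}}{k}\,\omega^{\bar{\nu}\lambda}\,dz^\alpha\wedge\frac{\partial s}{\partial w^\lambda} \;=\; -\,\frac{\sqrt{-1}}{k}\,dz^\alpha \wedge \frac{\partial s}{\partial w^\alpha} \;=\; -\,\frac{\sqrt{-1}}{k}\,\delta^{1,0}s.$$
Multiplying both sides by $k/\sqrt{-1}$ yields the claimed identity.

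The argument is very short, so there is no real obstacle beyond sign bookkeeping -- the essential minus sign comes from the graded Leibniz rule for $\iota_{\partial_{\bar{z}^\nu}}$ passing through the $1$-form $dz^\alpha$ in $\omega$. The prefactor $k/\sqrt{-1}$ is precisely tuned to cancel the $\sqrt{-1}/k$ coming from the single anti-Wick contraction in $\circledast_k$, so that the equality holds on the nose. This lemma is exactly the piece needed to identify Chan--Leung--Li's flat connection $\nabla + \nabla^{L^{\otimes k}} + \tfrac{k}{\sqrt{-1}}\gamma\circledast_k$ with the Kapranov's connection $D_{\operatorname{Kap}}^{L^{\otimes k}}$ in Proposition \ref{Proposition 5.4}, since under the decomposition $\gamma = (\delta^{0,1})^{-1}\omega + (\delta^{1,0})^{-1}\omega + \widetilde{I}$ the first summand should reproduce the $-\delta^{1,0}$ appearing in $D_{\operatorname{Kap}}^{L^{\otimes k}}$.
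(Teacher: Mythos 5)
Your computation is correct and is essentially identical to the paper's own proof: both unwind $(\delta^{0,1})^{-1}\omega = -\omega_{\alpha\overline{\beta}}\,\overline{w}^\beta\,dz^\alpha$ and then apply the defining formula of $\circledast_k$ with a single anti-Wick contraction, cancelling $\omega_{\alpha\overline{\beta}}$ against $\omega^{\overline{\beta}\lambda}$ to land on $-dz^\alpha\wedge\partial/\partial w^\alpha = -\delta^{1,0}$. The sign bookkeeping via the graded Leibniz rule for $\iota_{\partial_{\overline{z}^\nu}}$ and the cancellation of the prefactor $k/\sqrt{-1}$ against $\sqrt{-1}/k$ match the paper exactly.
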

\begin{proof}
	Note that $(\delta^{0, 1})^{-1} \omega = -\omega_{\alpha\overline{\beta}} \overline{w}^\beta dz^\alpha$. Then
	\begin{equation*}
		\frac{k}{\sqrt{-1}} ( (\delta^{0, 1})^{-1} \omega ) \circledast_k = -\omega^{\overline{\beta}\lambda} \omega_{\alpha\overline{\beta}} dz^\alpha \wedge \frac{\partial}{\partial w^\lambda} = -dz^\alpha \wedge \frac{\partial}{\partial w^\alpha} = -\delta^{1, 0}.
	\end{equation*}
\end{proof}

\begin{lemma}
	\label{Lemma 5.6}
	$I^L = \tfrac{1}{\sqrt{-1}} (\delta^{1, 0})^{-1}\omega$.
\end{lemma}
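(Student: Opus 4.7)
The plan is to pin down $I^L$ by computing its graded pieces $I^L_{(r)}$ directly. Recall that $I^L = \sum_{r \geq 1} I^L_{(r)}$ with
\[
I^L_{(r)} = (\widetilde{\nabla}^{1,0})^{r-1} (\delta^{1,0})^{-1} R^L \in \Omega^{0,1}(M, \operatorname{Sym}^r T^\vee \otimes \operatorname{End}(L)),
\]
and use the prequantum condition, which in the paper's conventions takes the form $R^L = -\sqrt{-1}\,\omega$ (so that the Chern form of $L$ represents $\omega$). Substituting this into the case $r=1$ immediately gives
\[
I^L_{(1)} = (\delta^{1,0})^{-1}(-\sqrt{-1}\,\omega) = \tfrac{1}{\sqrt{-1}} (\delta^{1,0})^{-1}\omega,
\]
since $-\sqrt{-1} = 1/\sqrt{-1}$. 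So the content of the lemma is that $I^L_{(r)} = 0$ for all $r \geq 2$, i.e.\ that $\widetilde{\nabla}^{1,0}$ annihilates $I^L_{(1)}$.

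The key computational step will be to prove the assertion $\nabla^{1,0}\bigl((\delta^{1,0})^{-1}\omega\bigr) = 0$; once this is in hand, $\widetilde{\nabla}^{1,0}\bigl((\delta^{1,0})^{-1}\omega\bigr) = (\delta^{1,0})^{-1} \nabla^{1,0}\bigl((\delta^{1,0})^{-1}\omega\bigr) = 0$, which forces $I^L_{(r)} = 0$ for every $r \geq 2$ by induction. In local complex coordinates $(\delta^{1,0})^{-1}\omega = \omega_{\mu\overline{\beta}}\, w^\mu\, d\overline{z}^\beta$, and applying $\nabla^{1,0}$ yields two terms:
\[
\nabla^{1,0}\bigl((\delta^{1,0})^{-1}\omega\bigr) = (\partial_\lambda \omega_{\mu\overline{\beta}})\, dz^\lambda \wedge w^\mu\, d\overline{z}^\beta \;-\; \omega_{\mu\overline{\beta}}\,\Gamma^\mu_{\lambda\nu}\, dz^\lambda \wedge w^\nu\, d\overline{z}^\beta,
\]
using $\nabla^{1,0}(d\overline{z}^\beta) = 0$ (mixed Christoffel symbols vanish in the Kähler setting). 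The Kähler identity $\nabla_\lambda \omega_{\mu\overline{\beta}} = 0$ reads $\partial_\lambda \omega_{\mu\overline{\beta}} = \Gamma^\nu_{\lambda\mu}\,\omega_{\nu\overline{\beta}}$; combined with the symmetry $\Gamma^\nu_{\lambda\mu} = \Gamma^\nu_{\mu\lambda}$ and a relabeling $\mu \leftrightarrow \nu$, the two terms cancel.

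There is a more conceptual alternative I would mention as a sanity check: since $\nabla$ preserves $\omega$ and the operators $\delta^{1,0}, (\delta^{1,0})^{-1}$ are defined using the Kähler splitting $T_\mathbb{C} = T \oplus \overline{T}$ which is itself preserved by $\nabla$, one expects $\nabla$ to commute with $(\delta^{1,0})^{-1}$ on parallel sections; applied to the parallel section $\omega$, this forces $\nabla(\delta^{1,0})^{-1}\omega = (\delta^{1,0})^{-1}\nabla\omega = 0$, and the $(1,0)$-component yields the vanishing claim.

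The main obstacle is the coordinate bookkeeping in verifying $\nabla^{1,0}\bigl((\delta^{1,0})^{-1}\omega\bigr) = 0$; once that is settled, the rest of the argument reduces to unwinding the definitions of $I^L_{(r)}$ and using the prequantum normalization of $R^L$. No recourse to the deeper Fedosov machinery is needed — in particular Lemma \ref{Lemma 4.3}, which says $(\nabla^{1,0} - \delta^{1,0}) I^L = -R^L$, serves only as a consistency check: for our candidate $\xi := \tfrac{1}{\sqrt{-1}}(\delta^{1,0})^{-1}\omega$, one has $\delta^{1,0}\xi = \tfrac{1}{\sqrt{-1}}\omega = -R^L$ (using $\delta^{1,0}(\delta^{1,0})^{-1}\omega = \omega$ from (\ref{Equation 3.1}) and $\pi_{0,*}\omega = 0 = \delta^{1,0}\omega$) and $\nabla^{1,0}\xi = 0$, so indeed $(\nabla^{1,0}-\delta^{1,0})\xi = -R^L$, matching $I^L$.
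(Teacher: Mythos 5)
Your proof is correct and follows essentially the same route as the paper: identify $I^L_{(1)} = \tfrac{1}{\sqrt{-1}}(\delta^{1,0})^{-1}\omega$ from the prequantum curvature $R^L = \tfrac{1}{\sqrt{-1}}\omega$, then kill all higher graded pieces via the coordinate computation $\nabla^{1,0}\bigl((\delta^{1,0})^{-1}\omega\bigr) = 0$, which is exactly the paper's ``standard identity for Christoffel symbols'' (i.e.\ $\partial_\lambda\omega_{\mu\overline{\beta}} = \Gamma^\nu_{\lambda\mu}\omega_{\nu\overline{\beta}}$). The only blemish is in your optional sanity check, where the intermediate equality $\tfrac{1}{\sqrt{-1}}\omega = -R^L$ should read $\tfrac{1}{\sqrt{-1}}\omega = R^L$; with that fixed, $(\nabla^{1,0}-\delta^{1,0})\xi = 0 - R^L = -R^L$ is indeed consistent with Lemma \ref{Lemma 4.3}.
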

\begin{proof}
	As the curvature of $L$ is $\tfrac{1}{\sqrt{-1}} \omega$, $I_{(1)}^L = \tfrac{1}{\sqrt{-1}} (\delta^{1, 0})^{-1}\omega$. Locally, $(\delta^{1, 0})^{-1}\omega = \omega_{\alpha\overline{\beta}} w^\alpha d\overline{z}^\beta$. Then
	\begin{align*}
		\nabla^{1, 0} (\delta^{1, 0})^{-1}\omega = & \frac{1}{2} \left( \frac{\partial \omega_{\alpha\overline{\beta}}}{\partial z^\gamma} - \omega_{\lambda\overline{\beta}} \Gamma_{\gamma\alpha}^\lambda \right) w^\alpha w^\gamma d\overline{z}^\beta = 0
	\end{align*}
	by a standard identity for Christoffel symbols $\Gamma_{\alpha\beta}^\gamma$ of $M$. Thus, $I_{(r)}^L = 0$ for all $r > 1$.
\end{proof}

\begin{lemma}
	\label{Lemma 5.7}
	On $\mathcal{W}_{\operatorname{cl}}^{1, 0} \otimes E \otimes L^{\otimes k}$, $\tfrac{k}{\sqrt{-1}} \widetilde{I} \circledast_k = I$.
\end{lemma}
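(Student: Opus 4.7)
The plan is to introduce a bundle map that converts $\overline{w}$-factors into $w$-derivatives via the inverse K\"ahler metric, and check that it intertwines the two sides of the equality. Concretely, define
\[ \iota_\omega : \operatorname{Sym}^r T^\vee \otimes \overline{T}^\vee \to \operatorname{Sym}^r T^\vee \otimes T, \qquad a \otimes \overline{w}^\nu \mapsto \omega^{\overline{\nu}\lambda}\, a \otimes \partial_{w^\lambda}, \]
and view the right-hand side as a derivation of $\mathcal{W}_{\operatorname{cl}}^{1,0}$ (then tensored trivially with $\operatorname{Id}_{E \otimes L^{\otimes k}}$). Since each $\widetilde{I}_{(r)}$ lies in $\Omega^{0,1}(M, \operatorname{Sym}^r T^\vee \otimes \overline{T}^\vee)$ and is therefore linear in $\overline{w}$, a direct inspection of formula~(\ref{Equation 5.2}) with $q = 1$ shows
\[ \tfrac{k}{\sqrt{-1}}\, \widetilde{I}_{(r)} \circledast_k \;=\; \iota_\omega(\widetilde{I}_{(r)}) \]
as operators on $\mathcal{W}_{\operatorname{cl}}^{1,0} \otimes E \otimes L^{\otimes k}$. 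The lemma thus reduces to the algebraic identity $\iota_\omega(\widetilde{I}_{(r)}) = I_{(r)}$ for all $r \geq 2$.

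I would begin with the base case $r = 2$. Starting from $R = -\omega_{\eta\overline{\nu}} R_{\alpha\overline{\beta}\mu}^\eta\, dz^\alpha \wedge d\overline{z}^\beta \otimes w^\mu \overline{w}^\nu$ and using $\omega_{\eta\overline{\nu}} \omega^{\overline{\nu}\lambda} = \delta_\eta^\lambda$, one obtains
\[ \iota_\omega(R) \;=\; -R_{\alpha\overline{\beta}\mu}^\lambda\, dz^\alpha \wedge d\overline{z}^\beta \otimes w^\mu \otimes \partial_{w^\lambda}, \]
which is precisely $\nabla^2$ viewed as a derivation on $\mathcal{W}_{\operatorname{cl}}^{1,0}$. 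Applying $(\delta^{1,0})^{-1}$ then yields $\iota_\omega(\widetilde{I}_{(2)}) = I_{(2)}$, since $(\delta^{1,0})^{-1}$ commutes with $\iota_\omega$: it touches only the holomorphic form/Weyl variables, which are disjoint from the $\overline{T}^\vee$ and $T$ factors on which $\iota_\omega$ acts.

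For the inductive step from $r$ to $r+1$, the crucial observation is that $\iota_\omega$ also commutes with $\nabla^{1,0}$, and hence with $\widetilde{\nabla}^{1,0} = (\delta^{1,0})^{-1} \circ \nabla^{1,0}$. This commutation is a direct consequence of the K\"ahler condition $\nabla \omega = 0$, which forces $\omega^{\overline{\nu}\lambda}$ to be covariantly constant and so allows $\nabla^{1,0}$ to pass through the $\omega$-contraction defining $\iota_\omega$. Iterating yields
\[ \iota_\omega(\widetilde{I}_{(r)}) \;=\; (\widetilde{\nabla}^{1,0})^{r-2} (\delta^{1,0})^{-1} \iota_\omega(R) \;=\; (\widetilde{\nabla}^{1,0})^{r-2} (\delta^{1,0})^{-1} \nabla^2 \;=\; I_{(r)}, \]
and summing over $r \geq 2$ gives $\iota_\omega(\widetilde{I}) = I$, completing the proof. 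The only modest subtlety is the bookkeeping between elements of $\operatorname{Sym}^r T^\vee \otimes T$ and the derivations they induce on $\mathcal{W}_{\operatorname{cl}}^{1,0}$, making sure the prefactor $\tfrac{k}{\sqrt{-1}}$ exactly cancels the $\tfrac{\sqrt{-1}}{k}$ in (\ref{Equation 5.2}); this is routine and presents no real obstacle.
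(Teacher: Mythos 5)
Your proof is correct and follows essentially the same route as the paper: the paper's one-line local computation writes $\widetilde{I}_{(r)} = \omega_{\eta\overline{\nu}} F_{\mu_1, \dots, \mu_r, \overline{\beta}}^\eta\, d\overline{z}^\beta \otimes w^{\mu_1} \cdots w^{\mu_r} \overline{w}^\nu$ and contracts the $\overline{w}^\nu$ against $\omega^{\overline{\nu}\lambda}\partial_{w^\lambda}$, which is exactly your $\iota_\omega$, with the factor $\tfrac{k}{\sqrt{-1}}$ cancelling the $\tfrac{\sqrt{-1}}{k}$ in (\ref{Equation 5.2}) in the same way. You merely make explicit (via $\nabla\omega = 0$ and the induction on $r$) what the paper leaves implicit in pulling the factor $\omega_{\eta\overline{\nu}}$ outside the covariant derivatives defining $\widetilde{I}_{(r)}$.
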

\begin{proof}
	Fix $r \geq 2$. Write $\widetilde{I}_{(r)}$ locally as $\widetilde{I}_{(r)} = \omega_{\eta\overline{\nu}} F_{\mu_1, ..., \mu_r, \overline{\beta}}^\eta d\overline{z}^\beta \otimes w^{\mu_1} \cdots w^{\mu_r} \overline{w}^\nu$. Then for all $s \in \mathcal{C}^\infty(M, \mathcal{W}_{\operatorname{cl}}^{1, 0})$, we have
	\begin{equation*}
		\tfrac{k}{\sqrt{-1}} \widetilde{I}_{(r)} \circledast_k s = \omega^{\overline{\mu}\lambda} \omega_{\eta\overline{\mu}} F_{\mu_1, ..., \mu_r, \overline{\beta}}^\eta d\overline{z}^\beta \otimes w^{\mu_1} \cdots w^{\mu_r} \frac{\partial s}{\partial w^\lambda} = I_{(r)} s.
	\end{equation*}
\end{proof}

\begin{proof}[\myproof{Proposition}{\ref{Proposition 5.4}}]
	Recall from (\ref{Equation 3.4}) that
	\begin{equation*}
		D_{\operatorname{Kap}}^{E \otimes L^{\otimes k}} = \nabla + \nabla^{E \otimes L^{\otimes k}} - \delta^{1, 0} + I + I^{E \otimes L^{\otimes k}}.
	\end{equation*}
	It suffices to show that $\tfrac{k}{\sqrt{-1}} \gamma \circledast_k = -\delta^{1, 0} + I + I^{L^{\otimes k}}$. By Lemmas \ref{Lemma 5.5}, \ref{Lemma 5.6} and \ref{Lemma 5.7}, we are done.
\end{proof}

\subsection{Proof of Theorem \ref{Theorem 1.2}}
\label{Subsection 5.4}
\quad\par
Let us recall the theorem.

\begin{theorem}[$=$ Theorem \ref{Theorem 1.2}]
	Let $(M, \omega)$ be a prequantizable K\"ahler manifold with a prequantum line bundle $L$ and $k \in \mathbb{Z}^+$. Then there exists an enriched functor 
	\begin{equation*}
		\mathscr{T}_k: \mathsf{DQ}_{\operatorname{qu}, k} \to \mathsf{GQ}
	\end{equation*}
	such that
	\begin{enumerate}
		\item for any object $E$ in $\mathsf{DQ}_{\operatorname{qu}, k}$, $\mathscr{T}_k(E) = E \otimes L^{\otimes k}$;
		\item for any objects $E_1, E_2$ in $\mathsf{DQ}_{\operatorname{qu}, k}$,
		\begin{equation*}
			\mathscr{T}_k: \operatorname{Hom}_{\mathsf{DQ}_{\operatorname{qu}, k}}(E_1, E_2) \to \operatorname{Hom}_\mathsf{GQ}(\mathscr{T}_k(E_1), \mathscr{T}_k(E_2))
		\end{equation*}
		is an isomorphism of filtered left $\mathcal{O}_M$-modules;
		\item $\mathscr{T}_k$ yields an equivalence of categories enriched over the monoidal category $\mathsf{Sh}(M)$ of sheaves of $\mathbb{C}$-vector spaces on $M$.
	\end{enumerate}
\end{theorem}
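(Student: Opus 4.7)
\medskip

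\noindent\textbf{Proof plan for Theorem \ref{Theorem 1.2}.} The strategy is to define $\mathscr{T}_k$ on morphisms via the fibrewise Bargmann-Fock action $\circledast_k$ of Subsection \ref{Subsection 5.3}, and then identify the resulting operator with a holomorphic differential operator via Proposition \ref{Proposition 3.11}. Concretely, given a level-$k$ quantizable morphism $O \in \mathcal{C}^\infty(M, \underline{\mathcal{W}}_{\operatorname{cl}} \otimes \operatorname{Hom}(E_1, E_2))$ (so $O$ is $D_k^{E_1, E_2}$-flat), the operation $O \circledast_k$ yields a smooth section of $\operatorname{Hom}(\mathcal{W}_{\operatorname{cl}}^{1, 0} \otimes \mathscr{T}_k(E_1), \mathcal{W}_{\operatorname{cl}}^{1, 0} \otimes \mathscr{T}_k(E_2))$. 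Because $O$ is polynomial in $\overline{w}^\nu$ (sections of $\operatorname{Sym} \overline{T}^\vee$, not of the completion), and each $\overline{w}^\nu$ acts via $\circledast_k$ as a single $\partial/\partial w^\lambda$, the resulting operator lies in $\operatorname{Hom}^{<\infty}(\mathcal{W}_{\operatorname{cl}}^{1, 0} \otimes \mathscr{T}_k(E_1), \mathcal{W}_{\operatorname{cl}}^{1, 0} \otimes \mathscr{T}_k(E_2))$; moreover, if $O$ has weight at most $r$, the order is at most $r$.

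\medskip

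\noindent The main technical step is to verify that $O \circledast_k$ is $D_{\operatorname{Kap}}^{\mathscr{T}_k(E_1), \mathscr{T}_k(E_2)}$-flat whenever $O$ is $D_k^{E_1, E_2}$-flat. Unfolding the definition via (\ref{Equation 3.7}), this reduces to the Leibniz-type identity
\begin{equation*}
    D_{\operatorname{Kap}}^{\mathscr{T}_k(E_2)}(O \circledast_k s) - O \circledast_k D_{\operatorname{Kap}}^{\mathscr{T}_k(E_1)}(s) = (D_k^{E_1, E_2} O) \circledast_k s,
\end{equation*}
for any $s \in \mathcal{C}^\infty(M, \mathcal{W}_{\operatorname{cl}}^{1, 0} \otimes \mathscr{T}_k(E_1))$. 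Using Proposition \ref{Proposition 5.4} (which expresses each $D_{\operatorname{Kap}}^{\mathscr{T}_k(E_i)}$ as $\nabla + \nabla^{\mathscr{T}_k(E_i)} + \tfrac{k}{\sqrt{-1}} \gamma \circledast_k + I^{E_i}$) together with associativity (\ref{Equation 5.3}) of $\circledast_k$ with respect to $\star_k$, the $\gamma$-terms combine to give $\tfrac{k}{\sqrt{-1}}[\gamma, O]_{\star_k} \circledast_k s$, the $I^{E_i}$-terms combine to give $(I^{E_2} \star_k O - O \star_k I^{E_1}) \circledast_k s$, and the remaining derivative terms yield $((\nabla + \nabla^{\operatorname{Hom}(E_1, E_2)}) O) \circledast_k s$. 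Summing reproduces $(D_k^{E_1, E_2} O) \circledast_k s$. Once flatness is established, Proposition \ref{Proposition 3.11} delivers a unique holomorphic differential operator $\mathscr{T}_k(O) \in \mathcal{D}(\mathcal{E}_1 \otimes \mathcal{L}^{\otimes k}, \mathcal{E}_2 \otimes \mathcal{L}^{\otimes k})$ of order at most $r$, setting up the map required by condition (2).

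\medskip

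\noindent Functoriality follows from (\ref{Equation 5.3}): the identity $(O' \star_k O) \circledast_k = (O' \circledast_k) \circ (O \circledast_k)$ transfers compositions in $\mathsf{DQ}_{\operatorname{qu}, k}$ to compositions of holomorphic differential operators in $\mathsf{GQ}$. The $\mathcal{O}_M$-linearity is a consequence of the separation of variables property in Theorem \ref{Theorem 1.1}: for a holomorphic function $f$ on an open $U$, $O_{f \operatorname{Id}_{E_2}}$ lies in $\mathcal{W}_{\operatorname{cl}}^{1, 0}$ (pure $w$, no $\overline{w}$) and acts on jet prolongations by multiplication by $P_f$, which corresponds to multiplication by $f$ after $\pi_{0, *}$.

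\medskip

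\noindent Finally, to show $\mathscr{T}_k$ is an isomorphism of filtered $\mathcal{O}_M$-modules on hom sheaves (condition (2)), I would use the fibrewise Bargmann-Fock representation: the assignment $O \mapsto O \circledast_k$ realises a fibrewise isomorphism
\begin{equation*}
    \underline{\mathcal{W}}_{\operatorname{cl}} \otimes \operatorname{Hom}(E_1, E_2) \cong \operatorname{Hom}^{<\infty}(\mathcal{W}_{\operatorname{cl}}^{1, 0} \otimes \mathscr{T}_k(E_1), \mathcal{W}_{\operatorname{cl}}^{1, 0} \otimes \mathscr{T}_k(E_2))
\end{equation*}
of filtered vector bundles (polynomial degree in $\overline{w}$ matching differential order), since this is the standard Stone-von Neumann / Weyl algebra identification. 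Restricted to flat sections, the same computation as in the main technical step, run in reverse, shows that every $D_{\operatorname{Kap}}$-flat section in the right-hand side pulls back to a $D_k^{E_1, E_2}$-flat section in the left-hand side; combined with Proposition \ref{Proposition 3.11}, this yields the required bijection at each level of the filtration. Essential surjectivity for (3) is immediate: every holomorphic vector bundle $E'$ can be written as $\mathscr{T}_k(E' \otimes L^{\otimes(-k)})$ once we equip $E' \otimes L^{\otimes(-k)}$ with any Hermitian metric. The main obstacle I anticipate is the bookkeeping in the Leibniz identity of the second paragraph, in particular pinning down the correct sign and ordering conventions relating $\star_k$, fibrewise composition, and $\circledast_k$; once those are fixed, the remainder reduces to routine verifications.
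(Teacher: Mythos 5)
Your proposal is correct and follows essentially the same route as the paper: the paper packages your ``main technical step'' (the Leibniz identity showing $O \circledast_k$ is $D_{\operatorname{Kap}}^{\mathscr{T}_k(E_1), \mathscr{T}_k(E_2)}$-flat whenever $O$ is $D_k^{E_1,E_2}$-flat) as Lemma \ref{Lemma 5.9}, proved by directly matching the two connection formulas via Lemmas \ref{Lemma 5.5}--\ref{Lemma 5.7} and (\ref{Equation 5.3}), and then invokes Proposition \ref{Proposition 3.11} exactly as you do for the filtered isomorphism, with the same treatment of functoriality, the $\mathcal{O}_M$-structure via $O_{f\operatorname{Id}_{E_2}}$, and essential surjectivity.
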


Before proving Theorem \ref{Theorem 1.2}, we need some preparations. Observe that for any two Hermitian holomorphic vector bundles $E_1, E_2$ over $M$, the map (\ref{Equation 5.1}) naturally induces an isomorphism of vector bundles (of infinite rank) over $M$:
\begin{equation}
	\label{Equation 5.5}
	\underline{\mathcal{W}}_{\operatorname{cl}} \otimes \operatorname{Hom}(E_1, E_2) \cong \operatorname{Hom}^{<\infty}( \mathcal{W}_{\operatorname{cl}}^{1, 0} \otimes \mathscr{T}_k(E_1), \mathcal{W}_{\operatorname{cl}}^{1, 0} \otimes \mathscr{T}_k(E_2) ),
\end{equation}
the bundle on the right hand side of which is defined in Subsection \ref{Subsection 3.4}. Now we prove a key lemma.

\begin{lemma}
	\label{Lemma 5.9}
	Under the identification (\ref{Equation 5.5}),
	\begin{enumerate}
		\item $D_k^{E_1, E_2}$ coincides with the Kapranov's connection $D_{\operatorname{Kap}}^{\mathscr{T}_k(E_1), \mathscr{T}_k(E_2)}$;
		\item the operation $\star_k$ in (\ref{Equation 4.5}) evaluated at $\hbar = \tfrac{\sqrt{-1}}{k}$ and restricted on $\underline{\mathcal{W}}_{\operatorname{cl}} \otimes \operatorname{Hom}(E_i, E_j)$'s coincides with the fibrewise composition of $\operatorname{Hom}( \mathcal{W}_{\operatorname{cl}}^{1, 0} \otimes \mathscr{T}_k(E_i), \mathcal{W}_{\operatorname{cl}}^{1, 0} \otimes \mathscr{T}_k(E_j) )$'s.
	\end{enumerate}
\end{lemma}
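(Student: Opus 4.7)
The plan is to treat the isomorphism $\Lambda \colon \underline{\mathcal{W}}_{\operatorname{cl}} \otimes \operatorname{Hom}(E_1, E_2) \xrightarrow{\cong} \operatorname{Hom}^{<\infty}(\mathcal{W}_{\operatorname{cl}}^{1,0} \otimes \mathscr{T}_k(E_1), \mathcal{W}_{\operatorname{cl}}^{1,0} \otimes \mathscr{T}_k(E_2))$ of (\ref{Equation 5.5}) pointwise as $\Lambda(a)(s) := a \circledast_k s$, and to verify that each summand of the Fedosov connection $D_k^{E_1, E_2}$ matches the corresponding summand of the Kapranov connection $D_{\operatorname{Kap}}^{\mathscr{T}_k(E_1), \mathscr{T}_k(E_2)}$ after transport by $\Lambda$. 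Part (2) is a direct consequence of (\ref{Equation 5.3}): for $a \in \mathcal{C}^\infty(M, \underline{\mathcal{W}}_{\operatorname{cl}} \otimes \operatorname{Hom}(E_2, E_3))$, $b \in \mathcal{C}^\infty(M, \underline{\mathcal{W}}_{\operatorname{cl}} \otimes \operatorname{Hom}(E_1, E_2))$ and $s \in \mathcal{C}^\infty(M, \mathcal{W}_{\operatorname{cl}}^{1,0} \otimes \mathscr{T}_k(E_1))$, the identity $(a \star_k b) \circledast_k s = a \circledast_k (b \circledast_k s)$ reads exactly $\Lambda(a \star_k b) = \Lambda(a) \circ \Lambda(b)$; the extension to differential-form-valued sections is forced by wedging form factors appropriately.

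For part (1), I would start from the alternative expression $D_k^{E_1, E_2} = \nabla + \nabla^{\operatorname{Hom}(E_1, E_2)} - \delta + \tfrac{k}{\sqrt{-1}} [\widetilde{I}, \cdot]_{\star_k} + I^{E_2} \star_k - \star_k I^{E_1}$ (see the remark after Proposition \ref{Proposition 4.1}) and Kapranov's formula (\ref{Equation 3.7}) for $D_{\operatorname{Kap}}^{\mathscr{T}_k(E_1), \mathscr{T}_k(E_2)}$. Using $I^{\mathscr{T}_k(E_i)} = I^{E_i} + k I^L$ together with Lemmas \ref{Lemma 5.5}, \ref{Lemma 5.6} and \ref{Lemma 5.7}, the combination $-\delta^{1,0} + I + k I^L$ collapses to $\tfrac{k}{\sqrt{-1}} \gamma \circledast_k$ as an operator on $\mathcal{W}_{\operatorname{cl}}^{1,0} \otimes \mathscr{T}_k(E_i)$, which recasts Kapranov's expression as
\[
    D_{\operatorname{Kap}}^{\mathscr{T}_k(E_1), \mathscr{T}_k(E_2)} \tilde\phi = (\nabla + \nabla^{\operatorname{Hom}(\mathscr{T}_k(E_1), \mathscr{T}_k(E_2))}) \tilde\phi + \bigl[ \tfrac{k}{\sqrt{-1}} \gamma \circledast_k + I^{E_2} \bigr] \circ \tilde\phi - (-1)^p \tilde\phi \circ \bigl[ \tfrac{k}{\sqrt{-1}} \gamma \circledast_k + I^{E_1} \bigr].
\]
Applying (\ref{Equation 5.3}) and part (2) term by term turns each pre- and postcomposition on the right-hand side back into a $\star_k$-product on the left-hand side: the two $\gamma \circledast_k$ pieces reassemble into $\tfrac{k}{\sqrt{-1}} [\gamma, \phi]_{\star_k}$, while the $I^{E_2} \circ$ and $\circ I^{E_1}$ pieces reassemble into $I^{E_2} \star_k \phi - (-1)^p \phi \star_k I^{E_1}$, reproducing $\Lambda(D_k^{E_1, E_2} \phi)$.

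The main obstacle is the matching of the connection pieces, i.e.\ the identity $\Lambda \circ (\nabla + \nabla^{\operatorname{Hom}(E_1, E_2)}) = \nabla^{\operatorname{Hom}(\mathscr{T}_k(E_1), \mathscr{T}_k(E_2))} \circ \Lambda$. This reduces to checking that the defining ingredients of $\circledast_k$ -- multiplication by $w^\mu$'s, contraction by the inverse symplectic form against $w$-derivatives, and the fibrewise $\operatorname{End}$-action -- are all parallel with respect to the Levi-Civita connection on $T^\vee$, its dual, and the relevant Chern connections, making use of the canonical isomorphism $\operatorname{Hom}(\mathscr{T}_k(E_1), \mathscr{T}_k(E_2)) \cong \operatorname{Hom}(E_1, E_2)$ which renders the $L^{\otimes k}$ factor inert. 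The secondary bookkeeping task is sign tracking: the graded commutator $[\gamma, \cdot]_{\star_k}$ with $\gamma$ of form degree one must align with the $(-1)^p$-factors appearing in $\delta^{1,0}(\tilde\phi)$ and in the $\tilde\phi \circ (\cdots)$ terms of Kapranov's expression, which should all fall into place once the above correspondences are applied with care.
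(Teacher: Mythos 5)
Your proposal is correct and follows essentially the same route as the paper: part (2) from the associativity identity (\ref{Equation 5.3}), and part (1) by a term-by-term comparison of $D_k^{E_1,E_2}$ with the formula (\ref{Equation 3.7}) for $D_{\operatorname{Kap}}^{\mathscr{T}_k(E_1),\mathscr{T}_k(E_2)}$, using Lemmas \ref{Lemma 5.5}--\ref{Lemma 5.7}, the property (\ref{Equation 5.3}), and $\nabla\omega = 0$ to match the connection pieces. The only (harmless) difference is organizational: you package $-\delta^{1,0} + I + I^{L^{\otimes k}}$ as $\tfrac{k}{\sqrt{-1}}\gamma\circledast_k$ via Proposition \ref{Proposition 5.4}, whereas the paper keeps $-\delta^{1,0}$ separate and matches $\tfrac{k}{\sqrt{-1}}[(\delta^{1,0})^{-1}\omega + \widetilde{I}, \quad]_{\star_k}$ against $(I + I^{L^{\otimes k}})\circ - \circ(I + I^{L^{\otimes k}})$ directly.
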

\begin{proof}
	The second condition holds due to (\ref{Equation 5.3}). Now we compare
	\begin{equation*}
		D_k^{E_1, E_2} = \nabla + \nabla^{\operatorname{Hom}(E_1, E_2)} - \delta^{1, 0} + \tfrac{k}{\sqrt{-1}} [(\delta^{1, 0})^{-1} \omega + \widetilde{I}, \quad]_{\star_k} + I^{E_2} \star_k - \star_k I^{E_1}
	\end{equation*}
	with the formula of $D_{\operatorname{Kap}}^{\mathscr{T}_k(E_1), \mathscr{T}_k(E_2)}$ given as in (\ref{Equation 3.7}):
	\begin{equation*}
		D_{\operatorname{Kap}}^{\mathscr{T}_k(E_1), \mathscr{T}_k(E_2)} = \nabla + \nabla^{\operatorname{Hom}(\mathscr{T}_k(E_1), \mathscr{T}_k(E_2))} - \delta^{1, 0} + (I + I^{\mathscr{T}_k(E_2)}) \circ - \circ (I + I^{\mathscr{T}_k(E_1)}).
	\end{equation*}
	Since $\nabla \omega = 0$, (\ref{Equation 5.5}) identifies the connection $\nabla + \nabla^{\operatorname{Hom}(E_1, E_2)}$ with $\nabla + \nabla^{\operatorname{Hom}(\mathscr{T}_k(E_1), \mathscr{T}_k(E_2))}$. It remains to show that $\tfrac{k}{\sqrt{-1}} [(\delta^{1, 0})^{-1} \omega + \widetilde{I}, \quad]_{\star_k}$ is identified with $(I + I^{L^{\otimes k}}) \circ - \circ (I + I^{L^{\otimes k}})$. Indeed, this has been already verified by Lemmas \ref{Lemma 5.6} and \ref{Lemma 5.7} together with the property (\ref{Equation 5.3}).
\end{proof}

\begin{proof}[\myproof{Theorem}{\ref{Theorem 1.2}}]
	Define $\mathscr{T}_k(E) = E \otimes L^{\otimes k}$ for any object $E$ in $\mathsf{DQ}_{\operatorname{qu}, k}$. Now suppose $E_1, E_2$ are objects in $\mathsf{DQ}_{\operatorname{qu}, k}$. By Lemma \ref{Lemma 5.9}, we see that $\circledast_k$ induces a cochain isomorphism
	\begin{align*}
		& (\Omega^*(M, \underline{\mathcal{W}}_{\operatorname{cl}} \otimes \operatorname{Hom}(E_1, E_2)), D_k^{E_1, E_2})\\
		\cong & (\Omega^*(M, \operatorname{Hom}^{<\infty}( \mathcal{W}_{\operatorname{cl}}^{1, 0} \otimes \mathscr{T}_k(E_1), \mathcal{W}_{\operatorname{cl}}^{1, 0} \otimes \mathscr{T}_k(E_2) )), D_{\operatorname{Kap}}^{\mathscr{T}_k(E_1), \mathscr{T}_k(E_2)}).
	\end{align*}
	Then by Proposition \ref{Proposition 3.11}, we obtain an isomorphism of $\mathbb{C}$-linear sheaves
	\begin{equation}
		\label{Equation 5.6}
		\mathscr{T}_k: \operatorname{Hom}_{\mathsf{DQ}_{\operatorname{qu}, k}}(E_1, E_2) \to \mathcal{D}(\mathcal{E}_1 \otimes \mathcal{L}^{\otimes k}, \mathcal{E}_2 \otimes \mathcal{L}^{\otimes k}) = \operatorname{Hom}_{\mathsf{GQ}}( \mathscr{T}_k(E_1), \mathscr{T}_k(E_2) )
	\end{equation}
	For any object $E$ in $\mathsf{DQ}_{\operatorname{qu}, k}$, $\mathscr{T}_k(\operatorname{Id}_E) = \operatorname{Id}_{\mathscr{T}_k(E)}$ by (\ref{Equation 5.2}). Since $\star_k$ is identified with $\circ$ via (\ref{Equation 5.5}) by Lemma \ref{Lemma 5.9}, it is the usual composition of holomorphic differential operators when restricted on $\operatorname{Hom}_{\mathsf{DQ}_{\operatorname{qu}, k}}(E_i, E_j)$'s. In other words, $\mathscr{T}_k$ is compatible with compositions in $\mathsf{DQ}_{\operatorname{qu}, k}$ and $\mathsf{GQ}$. Therefore, $\mathscr{T}_k: \mathsf{DQ}_{\operatorname{qu}, k} \to \mathsf{GQ}$ is an enriched functor. It is obvious that $\mathscr{T}_k$ is essentially surjective, hence yields an equivalence of categories enriched over $\mathsf{Sh}(M)$.\par
	Observe from (\ref{Equation 5.2}) and Proposition \ref{Proposition 3.11} that the isomorphism (\ref{Equation 5.6}) preserves the filtrations. It remains to check that the isomorphism (\ref{Equation 5.6}) preserves the left $\mathcal{O}_M$-module structures. Consider any holomorphic function $f$ on $M$. By Example \ref{Example 4.9}, $O_{f\operatorname{Id}_{E_2}} = \sum_{r=0}^\infty (\widetilde{\nabla}^{1, 0})^r (f\operatorname{Id}_{E_2})$ is a section of $\mathcal{W}_{\operatorname{cl}}^{1, 0} \otimes \operatorname{Hom}(E_2, E_2)$. We can then see from (\ref{Equation 5.2}) that $\mathscr{T}_k(O_{f \operatorname{Id}_{E_2}}) = f\operatorname{Id}_{\mathscr{T}_k(E_2)}$. Therefore, for any $\phi \in \operatorname{Hom}_{\mathsf{DQ}_{\operatorname{qu}, k}}(E_1, E_2)(M)$,
	\begin{equation*}
		\mathscr{T}_k((O_{f \operatorname{Id}_{E_2}}) \star_k \phi) = \mathscr{T}_k(O_{f \operatorname{Id}_{E_2}}) \circ \mathscr{T}_k(\phi) = (f\operatorname{Id}_{\mathscr{T}_k(E_2)}) \circ \mathscr{T}_k(\phi) = f \mathscr{T}_k(\phi).
	\end{equation*}
\end{proof}

\begin{remark}
	\label{Remark 5.10}
	Note that $\mathsf{DQ}_{\operatorname{qu}, k}$ is also equivalent to the opposite category $\mathsf{GQ}^{\operatorname{op}}$ of $\mathsf{GQ}$ via the composition of enriched functors
	\begin{equation*}
		\mathscr{T}_k^{\operatorname{t}} := 
		\tau \circ \mathscr{T}_k: \mathsf{DQ}_{\operatorname{qu}, k} \to \mathsf{GQ}^{\operatorname{op}}.
	\end{equation*}
	We call $\mathscr{T}_k^{\operatorname{t}}$ the \emph{transposed functor} of $\mathscr{T}_k$.
\end{remark}

\begin{remark}
	Recall the result of Adachi-Ishiki-Kanno \cite{AdaIshKan2023}, which is mentioned in Subsection \ref{Subsection 5.1}, that asymptotic expansions of $T_{\psi, k}^{E_2, E_3} \circ T_{\phi, k}^{E_1, E_2}$'s give rise to a deformation quantization $\mathsf{DQ}_{\operatorname{BT}}$ of $(\mathsf{C}, \{\quad, \quad\})$. It is expected that $\mathsf{DQ}_{\operatorname{BT}}$ should satisfy a variant of the condition in Definition \ref{Definition 2.6} with the roles of holomorphic and antiholomorphic variables swapped (see \cite{Sch2012}). The relationship between $\mathsf{DQ}_{\operatorname{BT}}$ (resp. the operators $T_{\phi, k}^{E_1, E_2}$'s) and the deformation quantization $\mathsf{DQ}$ (resp. the enriched functors $\mathscr{T}_k$'s) constructed in this paper will be studied in the future.
\end{remark}

\appendix
\section{Holomorphic transposes of holomorphic differential operators}
\label{Appendix B}
Consider two holomorphic vector bundles $E_1, E_2$ over an $n$-dimensional complex manifold $M$. Recall that any smooth differential operator from $E_1$ to $E_2$ admits its \emph{transposed operator}, which is a smooth differential operator from $\operatorname{Hom}(E_2, \bigwedge^{2n} T_\mathbb{C}^\vee)$ to $\operatorname{Hom}(E_1, \bigwedge^{2n} T_\mathbb{C}^\vee)$, where $T_\mathbb{C} = TM_\mathbb{C}$.\par
Now, suppose $P$ is a holomorphic differential operator from $E_1$ to $E_2$. Analogously, we can define a holomorphic differential operator $P^{\operatorname{t}}$ from $\operatorname{Hom}(E_2, K)$ to $\operatorname{Hom}(E_1, K)$, where $K$ is the canonical bundle of $M$. Its construction is as follows. Locally, with $E_1, E_2$ being holomorphically trivialized, $P$ can be written as
\begin{equation*}
	\sum_{r=0}^\infty A^{\mu_1, ..., \mu_r} \circ \frac{\partial^r}{\partial z^{\mu_1} \cdots \partial z^{\mu_r}},
\end{equation*}
where $(z^1, ..., z^n)$ are local complex coordinates and $A^{\mu_1, ..., \mu_r}$ are matrices of local holomorphic functions, such that there are only finitely many non-zero $A^{\mu_1, ..., \mu_r}$'s. Then $P^{\operatorname{t}}$ is defined by
\begin{equation*}
	\sum_{r=0}^\infty (-1)^r \frac{\partial^r}{\partial z^{\mu_1} \cdots \partial z^{\mu_r}} \circ (A^{\mu_1, ..., \mu_r})^{\operatorname{t}}.
\end{equation*}
We will state a holomorphic version of Proposition 2.4.4 in \cite{Tar1995}, verifying that `the \emph{holomorphic transposed operator} of $P$' is a suitable naming of the operator $P^{\operatorname{t}}$.

\begin{proposition}
	Let $M$ be an $n$-dimensional complex manifold, $E_1, E_2$ be holomorphic vector bundles over $M$ and $P$ be a holomorphic differential operator from $E_1$ to $E_2$. Then there exists a smooth bi-differential operator
	\begin{equation*}
		G: \Omega^{0, *}(M, E_1) \times \Omega^{0, *}(M, \operatorname{Hom}(E_2, K)) \to \Omega^{n-1, *}(M)
	\end{equation*}
	such that for all open subset $U$ of $M$, $s_1 \in \Omega^{0, *}(U, E_1)$ and $s_2 \in \Omega^{0, *}(U, \operatorname{Hom}(E_2, K))$,
	\begin{equation*}
		\langle P s_1, s_2 \rangle_{E_2} - \langle s_1, P^{\operatorname{t}} s_2 \rangle_{E_1} = \partial (G(s_1, s_2)),
	\end{equation*}
	where $\langle \quad, \quad \rangle_{E_i}: \Omega^{0, *}(U, E_i) \times \Omega^{0, *}(U, \operatorname{Hom}(E_i, K)) \to \Omega^{n, *}(U)$ is the natural pairing for $i = 1, 2$.
\end{proposition}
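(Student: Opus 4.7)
The plan is to establish the identity locally by integration by parts in holomorphic coordinates and then to assemble the local formulas into a global bi-differential operator $G$. Fix a chart $U$ of $M$ with holomorphic coordinates $(z^1, \ldots, z^n)$ and holomorphic frames of $E_1, E_2$; in these trivializations, $P = \sum_\alpha A^\alpha \partial_z^\alpha$ with holomorphic matrix coefficients (finite sum), and the formula in the text defines $P^{\operatorname{t}}$ on $U$. By $\mathbb{C}$-bilinearity in $P$, it will suffice to handle a single monomial $P = A \circ \partial_{z^{\mu_1}} \cdots \partial_{z^{\mu_r}}$ and then sum.

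The whole argument rests on one pointwise identity: for any holomorphic bundle $E$ on $U$, $s \in \Omega^{0,*}(U, E)$, $u \in \Omega^{0,*}(U, \operatorname{Hom}(E, K))$ and direction $\mu$,
\begin{equation*}
    \langle \partial_{z^\mu} s, u\rangle_E + \langle s, \partial_{z^\mu} u\rangle_E \;=\; \partial\bigl(\iota_{\partial_{z^\mu}}\langle s, u\rangle_E\bigr).
\end{equation*}
Both sides compute the holomorphic Lie derivative $\mathcal{L}^\partial_{\partial_{z^\mu}}\langle s, u\rangle_E$: the left-hand side by the Leibniz rule (valid because $\partial_{z^\mu}$, taken componentwise in holomorphic frames, commutes with $\bar\partial$), and the right-hand side by Cartan's formula, using that $\partial$ annihilates the $(n,*)$-form $\langle s, u\rangle_E$. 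I then induct on $r$: the base case $r = 0$ is the fibrewise duality $\langle A s_1, s_2\rangle_{E_2} = \langle s_1, A^{\operatorname{t}} s_2\rangle_{E_1}$ with $G = 0$, and for $r \geq 1$ I apply the key identity $r$ times to migrate $\partial_{z^{\mu_1}}, \ldots, \partial_{z^{\mu_r}}$ one at a time from $s_1$ onto the $\operatorname{Hom}(E_2, K)$-valued argument, picking up a sign $(-1)$ and a $\partial$-exact correction at each step. Assembling these corrections yields an explicit local bi-differential $G_{U,P}$ with
\begin{equation*}
    \langle Ps_1, s_2\rangle_{E_2} - \langle s_1, P^{\operatorname{t}} s_2\rangle_{E_1} \;=\; \partial G_{U,P}(s_1, s_2) \quad \text{on } U.
\end{equation*}

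The hard part will be the globalization, since $G_{U,P}$ is manifestly coordinate-dependent. First I would verify that $P^{\operatorname{t}}$ itself is globally well defined: if $Q_U, Q_V$ denote two coordinate-based candidates on an overlap, their difference pairs against arbitrary smooth $s_1, s_2$ to give $\partial(G_{U,P} - G_{V,P})$, and specializing to holomorphic inputs together with nondegeneracy of the fibrewise pairing --- applied order by order in the filtration of Subsection \ref{Subsection 3.4} --- forces $Q_U = Q_V$. Hence $\Phi(s_1, s_2) := \langle Ps_1, s_2\rangle - \langle s_1, P^{\operatorname{t}} s_2\rangle$ is a globally defined bi-differential expression which is $\partial$-closed (automatic, being $(n,*)$-valued) and locally $\partial$-exact. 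To produce a global bi-differential primitive $G$, I would take a locally finite cover $\{U_\alpha\}$ by such charts with a subordinate partition of unity $\{\chi_\alpha\}$, set $G := \sum_\alpha \chi_\alpha G_{U_\alpha, P}$, and absorb the residual discrepancy $\sum_\alpha (\partial\chi_\alpha) \wedge G_{U_\alpha, P}$ via an iterative correction: the Čech differences $G_{U_\alpha, P} - G_{U_\beta, P}$ are $\partial$-closed bi-differentials, so combined with $\sum_\alpha \partial\chi_\alpha = 0$ and a local $\partial$-Poincar\'e lemma for bi-differentials (handled one term of the order filtration at a time), the discrepancy can be killed by a bi-differential correction to $G$. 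More conceptually, the whole argument admits a coordinate-free reformulation via the jet-bundle duality $\mathcal{D}(\mathcal{E}_1, \mathcal{E}_2) \cong \operatorname{Hom}(JE_1, E_2)$ of Subsection \ref{Subsection 3.4}, in which $P^{\operatorname{t}}$ and the ``boundary term'' $G$ are produced directly from the canonical pairing between $JE_1$ and $\operatorname{Hom}(JE_1, K)$, bypassing patching entirely.
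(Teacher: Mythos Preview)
Your local argument is the paper's argument: both reduce to a monomial $P = A\,\partial_{z^{\mu_1}}\cdots\partial_{z^{\mu_r}}$ in a chart and integrate by parts one holomorphic derivative at a time. You phrase the single step via the Cartan-type identity $\langle\partial_{z^\mu}s,u\rangle + \langle s,\partial_{z^\mu}u\rangle = \partial\bigl(\iota_{\partial_{z^\mu}}\langle s,u\rangle\bigr)$ and induct on $r$; the paper writes out the resulting telescoping sum directly. The explicit local $G_{U,P}$ you obtain is exactly the formula the paper displays.

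Where you diverge is the globalization. The paper spends one sentence on it: ``we only need to prove our proposition locally, as we can obtain the global statement using a smooth partition of unity,'' citing Tarkhanov's Theorem~2.4.1 for the mechanism. You instead treat this as the hard part, correctly observing that the naive patch $G=\sum_\alpha\chi_\alpha G_{U_\alpha,P}$ leaves a residual $\sum_\alpha(\partial\chi_\alpha)\wedge G_{U_\alpha,P}$, and you propose a \v Cech-style correction using a $\partial$-Poincar\'e lemma for bi-differential operators. Your concern is legitimate and your sketch would work, but it is considerably more machinery than the paper deploys; the paper is content to import the patching from the smooth analogue in Tarkhanov, where the same issue is resolved. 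Your closing remark about a coordinate-free jet-bundle formulation is the cleanest route if you want to avoid the reference entirely, and is closer in spirit to how the paper handles jets elsewhere (Subsection~\ref{Subsection 3.4}).

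In short: the substance of your proof matches the paper's; you are more scrupulous about a step the paper outsources.
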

\begin{proof}
	As suggested by the proof of Theorem 2.4.1 in \cite{Tar1995}, we only need to prove our proposition locally, as we can obtain the global statement using a smooth partition of unity. Hence, without loss of generality, we assume $(z^1, ..., z^n)$ are global complex coordinates, $E_1, E_2$ are holomorphically trivial and $P$ is given by $A \circ \partial_{z^{\mu_1}} \cdots \partial_{z^{\mu_r}}$ for some holomorphic section $A$ of $\operatorname{Hom}(E_1, E_2)$. A direct computation shows that
	\begin{equation*}
		\langle P s_1, s_2 \rangle_{E_2} - \langle s_1, P^{\operatorname{t}} s_2 \rangle_{E_1} = \sum_{i=1}^r (-1)^{i-1} \frac{\partial}{\partial z^{\mu_i}} \left\langle \frac{\partial^{r-i}}{\partial z^{\mu_{i+1}} \cdots \partial z^{\mu_r}} s_1, \frac{\partial^{i-1}}{\partial z^{\mu_{i-1}} \cdots \partial z^{\mu_1}} \left( A^{\operatorname{t}} s_2 \right) \right\rangle_{E_1}.
	\end{equation*}
	Thus, the following formula defines a desired bi-differential operator with values in $(n-1, *)$-forms:
	\begin{equation*}
		G(s_1, s_2) := \sum_{i=1}^r (-1)^{i-1} \iota_{\partial_{z^{\mu_i}}} \left\langle \frac{\partial^{r-i}}{\partial z^{\mu_{i+1}} \cdots \partial z^{\mu_r}} s_1, \frac{\partial^{i-1}}{\partial z^{\mu_{i-1}} \cdots \partial z^{\mu_1}} \left( A^{\operatorname{t}} s_2 \right) \right\rangle_{E_1}.
	\end{equation*}
\end{proof}

\subsection*{Acknowledgement}
\quad\par
The author of this paper thanks Naichung Conan Leung for fruitful discussions during the visit in the Institute of Mathematical Sciences at the Chinese University of Hong Kong in June 2024.

\bibliographystyle{amsplain}
\bibliography{References}

\end{document}